\documentclass[reqno]{amsart}

\usepackage{amsfonts}
\usepackage[all]{xy}
\usepackage{amssymb}
\usepackage{amsmath}
\usepackage{epsfig}
\usepackage{amscd}
\usepackage{graphicx, color}
\usepackage{epstopdf}
\usepackage[all]{xy}
\usepackage{mathrsfs}

\def\E{\ifmmode{\mathbb E}\else{$\mathbb E$}\fi} 
\def\N{\ifmmode{\mathbb N}\else{$\mathbb N$}\fi} 
\def\R{\ifmmode{\mathbb R}\else{$\mathbb R$}\fi} 
\def\Q{\ifmmode{\mathbb Q}\else{$\mathbb Q$}\fi} 
\def\C{\ifmmode{\mathbb C}\else{$\mathbb C$}\fi} 
\def\H{\ifmmode{\mathbb H}\else{$\mathbb H$}\fi} 
\def\Z{\ifmmode{\mathbb Z}\else{$\mathbb Z$}\fi} 
\def\P{\ifmmode{\mathbb P}\else{$\mathbb P$}\fi} 
\def\T{\ifmmode{\mathbb T}\else{$\mathbb T$}\fi} 
\def\SS{\ifmmode{\mathbb S}\else{$\mathbb S$}\fi} 
\def\DD{\ifmmode{\mathbb D}\else{$\mathbb D$}\fi} 

\newcommand{\e}{\varepsilon}

\newcommand{\del}{\partial}

\newcommand{\ben}{\begin{enumerate}}
\newcommand{\een}{\end{enumerate}}
\newcommand{\be}{\begin{equation}}
\newcommand{\ee}{\end{equation}}
\newcommand{\bea}{\begin{eqnarray}}
\newcommand{\eea}{\end{eqnarray}}
\newcommand{\beastar}{\begin{eqnarray*}}
\newcommand{\eeastar}{\end{eqnarray*}}
\newcommand{\bc}{\begin{center}}
\newcommand{\ec}{\end{center}}

\theoremstyle{theorem}
\newtheorem{thm}{Theorem}[section]
\newtheorem{cor}[thm]{Corollary}
\newtheorem{lem}[thm]{Lemma}
\newtheorem{prop}[thm]{Proposition}

\newtheorem{conj}[thm]{Conjecture}

\theoremstyle{definition}
\newtheorem{defn}[thm]{Definition}
\newtheorem{rem}[thm]{Remark}

\newtheorem*{thm*}{Theorem}

\numberwithin{equation}{section}

\hsize=5.0truein \hoffset=.25truein \vsize=8.375truein
\voffset=.15truein
\def\R{{\mathbb R}}
\def\osc{{\hbox{\rm osc}}}

\def\tr{{\hbox{\bf tr}}}
\def\Tr{{\hbox{\bf Tr}}}

\def\E{{\mathbb E}}
\def\Z{{\mathbb Z}}
\def\C{{\mathbb C}}
\def\R{{\mathbb R}}
\def\P{{\mathbb P}}

\def\N{{\mathbb N}}

\def\11{{\mathbb I}}
\def\K{\mathbb{K}}
\def\H{\mathbb{H}}

\def\C{\mathbb{C}}
\def\Z{\mathbb{Z}}

\def\T{\mathbb{T}}

\def\Q{\mathbb{Q}}

\def\E{\ifmmode{\mathbb E}\else{$\mathbb E$}\fi} 
\def\N{\ifmmode{\mathbb N}\else{$\mathbb N$}\fi} 
\def\R{\ifmmode{\mathbb R}\else{$\mathbb R$}\fi} 
\def\Q{\ifmmode{\mathbb Q}\else{$\mathbb Q$}\fi} 
\def\C{\ifmmode{\mathbb C}\else{$\mathbb C$}\fi} 
\def\Z{\ifmmode{\mathbb Z}\else{$\mathbb Z$}\fi} 
\def\P{\ifmmode{\mathbb P}\else{$\mathbb P$}\fi} 
\def\CS{\ifmmode{\mathbb S}\else{$\mathbb S$}\fi} 
\def\DD{\ifmmode{\mathbb D}\else{$\mathbb D$}\fi} 

\def\R{{\mathbb R}}
\def\osc{{\hbox{\rm osc}}}

\def\E{{\mathbb E}}
\def\Z{{\mathbb Z}}
\def\C{{\mathbb C}}
\def\R{{\mathbb R}}

\def\N{{\mathbb N}}







\def\e{\varepsilon}

\def\CA{{\mathcal A}}

\def\CC{{\mathcal C}}

\def\CG{{\mathcal G}}
\def\CH{{\mathcal H}}

\def\CP{{\mathcal P}}

\def\CP{{\mathcal P}}
\def\CS{{\mathcal S}}

\def\CU{{\mathcal U}}
\def\CV{{\mathcal V}}

%

%



\def\darr#1{\raise1.5ex\hbox{$\leftrightarrow$}
\mkern-16.5mu #1}

\def\roughly#1{\raise.3ex\hbox{$#1$\kern-.75em
\lower1ex\hbox{$\sim$}}}

\def\opname#1{\mathop{\kern0pt{\rm #1}}\nolimits}

\def\dim{\opname{dim}}
\def\vol{\opname{vol}}

\def\dudtau{\frac{\partial u}{\partial \tau}}
\def\dudt{\frac{\partial u}{\partial t}}

\def\supp{\operatorname{supp}}
\def\Cal{\operatorname{Cal}}

\def\Graph{\operatorname{Graph}}
\def\Dev{\operatorname{Dev}}

\def\leng{\operatorname{leng}}

\def\Image{\operatorname{Image}}
\def\Int{\operatorname{Int}}

\begin{document}
\quad \vskip1.375truein

\def\mq{\mathfrak{q}}
\def\mH{\mathfrak{H}}
\def\mh{\mathfrak{h}}
\def\ma{\mathfrak{a}}
\def\ms{\mathfrak{s}}
\def\mm{\mathfrak{m}}
\def\mn{\mathfrak{n}}

\def\Hoch{{\tt Hoch}}
\def\mt{\mathfrak{t}}
\def\ml{\mathfrak{l}}
\def\mT{\mathfrak{T}}
\def\mL{\mathfrak{L}}
\def\mg{\mathfrak{g}}
\def\md{\mathfrak{d}}

\title[Area-preserving homeomophism group of $D^2$]{Continuous Hamiltonian dynamics and
area-preserving homeomorphism group of $D^2$}
\author{Yong-Geun Oh}

\address{Center for Geometry and Physics, Institute for Basic Sciences (IBS), Pohang, Korea
77 Cheongam-ro, Nam-gu, Pohang-si, Gyeongsangbuk-do, Korea 790-784 \newline
\& Pohang University of Science and Technology (POSTECH), Pohang, Korea}
\email{yongoh1@postech.ac.kr}

\begin{abstract} The main purpose of this paper is to propose a scheme of a proof of the
nonsimpleness of the group $Homeo^\Omega(D^2,\del D^2)$ of area preserving homeomorphisms
of the 2-disc $D^2$. We first establish the existence of Alexander isotopy in the
category of Hamiltonian homeomorphisms.
This reduces the question of extendability of the well-known Calabi homomorphism
$\Cal: Diff^\Omega(D^1,\del D^2) \to \R$ to a homomorphism $\overline \Cal: Hameo(D^2,\del D^2) \to \R$
to that of the vanishing of the basic phase function $f_{\underline{\mathbb F}}$,
a Floer theoretic graph selector constructed in \cite{oh:jdg}, that is associated to
the graph of the topological Hamiltonian loop and its normalized Hamiltonian $\underline{F}$
on $S^2$ that is obtained via the natural embedding $D^2 \hookrightarrow S^2$.
Here $Hameo(D^2,\del D^2)$ is the group of
Hamiltonian homeomorphisms introduced by M\"uller and the author \cite{oh:hameo1}.
We then provide an evidence of this vanishing conjecture
by proving the conjecture for the special class of \emph{weakly graphical}
topological Hamiltonian loops on $D^2$.
\end{abstract}

\keywords{Area-preserving homeomorphism group, Calabi invariant,
Lagrangian submanifolds, generating function, basic phase function,
topological Hamiltonian loop}

\thanks{The present work is supported by the IBS project \# IBS-R003-D1}

\date{January 2015; revised in September 2015}

\maketitle

\hskip0.3in MSC2010: 53D05, 53D35, 53D40; 37E30.
\medskip

\tableofcontents

\section{Introduction and statements of main results}
\label{sec:vanishing}

\subsection{Calabi homomorphism on $D^2$}

Denote by $Diff^\Omega(D^2, \del D^2)$ the group of area-preserving diffeomorphisms
supported in the interior of $D^2$ with respect to the standard area form
$\Omega = dq\wedge dp$ on $D^2 \subset \R^2$. For any $\phi \in Diff^\Omega(D^2,\del D^2)$
$$
\phi^*\Omega = \Omega
$$
by definition. Write $\Omega = d\alpha$ for some choice of
$\alpha$. Then this equation leads to the statement $\phi^*\alpha - \alpha$ is
closed. Furthermore since $\phi$ is supported in the interior, the one-form
$$
\phi^*\alpha - \alpha
$$
vanishes near $\del D^2$ and so defines a de Rham cohomology class
lying in $H^1(D^2,\del D^2)$. Since the latter group is trivial, we can find
a function $h_{\phi,\alpha}$ supported in the interior such that
\be\label{eq:dhphi}
dh_{\phi,\alpha} = \phi^*\alpha - \alpha.
\ee
Then the following is the well-known definition of Calabi invariant \cite{calabi}.
\begin{defn}[Calabi invariant]\label{defn:Cal-defn}
We define
$$
\Cal(\phi) = \frac{1}{2} \int_{D^2} h_{\phi,\alpha}.
$$
\end{defn}
One can show that this value does not depend on the choice of
the one-form $\alpha$ but depends only on the diffeomorphism.
We will fix one such form $\alpha$ and so suppress the dependence
$\alpha$ from our notation, and just denote $h_\phi = h_{\phi,\alpha}$.

Another equivalent definition does not involve the choice of one-form
$\alpha$ but uses the `past history' of the diffeomorphism in the setting of
Hamiltonian dynamics \cite{banyaga}. More precisely, this definition implicitly relies on the
following three facts:
\begin{enumerate}
\item $\Omega$ on two dimensional surface is a symplectic form and hence
$$
Diff^\Omega(D^2,\del D^2) = Symp_\omega(D^2,\del D^2)
$$
where $\omega = \Omega$.
\item $D^2$ is simply connected, which
in turn implies that any symplectic isotopy is a Hamiltonian isotopy.
\item The group $Diff^\Omega(D^2,\del D^2)$ is contractible.
(For this matter, finiteness of $\pi_1(Diff^\Omega(D^2,\del D^2),id) \cong \{0\}$ is enough.)
\end{enumerate}

It is well-known (see \cite{gamb-ghys}, \cite{oh:simpleD2} for example) and easy to construct a sequence
$\phi_i \in Diff^\Omega(D^2,\del D^2)$ such that $\phi_i \to id$ in $C^0$ topology but
$$
\Cal(\phi_i) = 1
$$
for all $i$'s. This implies that $\Cal$ cannot be continuously extended to the full group
$Homeo^\Omega(D^2,\del D^2)$ of area-preserving homeomorphisms.

However here is the main conjecture of the paper concerning nonsimpleness of
the group $Homeo^\Omega(D^2,\del D^2)$. The author learned from A. Fathi
in our discussion on the group $Hameo(D^2,\del D^2)$ \cite{fathi-private} that
the following will be important in relation to the study of nonsimpleness
conjecture. (We refer to \cite{oh:hameo2} for the argument needed to complete this nonsimpleness proof
out of this conjecture.)
\medskip

\begin{conj}\label{conj:Calabiextend}
Let $Hameo(D^2,\del D^2) \subset Homeo^\Omega(D^2,\del D^2)$ be the subgroup
of Hamiltonian homeomorphisms on the two-disc.
Then the Calabi homomorphism $\Cal: Diff^\Omega(D^2,\del D^2) \to \R$ extends
continuously to $Hameo(D^2,\del D^2)$ in Hamiltonian topology in the sense
of \cite{oh:hameo1}.
\end{conj}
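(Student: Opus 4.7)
The natural strategy is to extend $\Cal$ via the ``past history'' formula rather than the de Rham formula. For smooth $\phi = \phi_H^1 \in Diff^\Omega(D^2,\del D^2)$ with normalized Hamiltonian $H$ compactly supported in $\Int D^2$, one has $\Cal(\phi) = \int_0^1\!\int_{D^2} H_t\, \Omega\, dt$. Given $h \in Hameo(D^2,\del D^2)$, the definition of the Hamiltonian topology from \cite{oh:hameo1} supplies a topological Hamiltonian $H$, obtained as a limit of smooth $H_i$'s in the Hofer-type norm, whose time-one map is $h$. I would set
\[
\overline{\Cal}(h) := \int_0^1\!\int_{D^2} H_t\, \Omega\, dt = \lim_i \Cal(\phi_{H_i}^1),
\]
the limit existing because the Hamiltonian topology dominates the spatial $L^1$ norm integrated in time. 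Continuity in the Hamiltonian topology and the homomorphism property are then automatic, so the entire conjecture reduces to well-definedness, i.e.\ independence of $\overline{\Cal}(h)$ from the choice of topological Hamiltonian $H$ generating $h$.

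Well-definedness is in turn equivalent to showing that every topological Hamiltonian loop $F$ on $D^2$ (one with $\phi_F^1 = id$) satisfies $\int_0^1\!\int_{D^2} F_t\, \Omega\, dt = 0$. The first step is an Alexander isotopy in the hameomorphism category: for $s \in (0,1]$ the radial rescaling $\phi^t_s(x) = s\,\phi_F^t(x/s)$ on $|x| \le s$, extended by the identity outside, should be established as a continuous family of topological Hamiltonian loops with controlled generators, interpolating between $\phi_F^\cdot$ at $s=1$ and the constant identity loop as $s \to 0$. Since the smooth case already exhibits invariance of the action integral under Alexander rescaling, this allows us to deform $F$, without changing the value of its action integral, to a loop supported in an arbitrarily small neighborhood of an interior point. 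Next, choosing an embedding $D^2 \hookrightarrow S^2$ onto (say) a small lower spherical cap, extension of $F$ by zero produces a normalized topological Hamiltonian loop $\underline F$ on $S^2$, whose action integral coincides with that of $F$.

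The final and decisive step is to show this integral vanishes by identifying it with the value at a diagonal point of the basic phase function $f_{\underline{\mathbb F}}$ from \cite{oh:jdg}. This function is a Floer-theoretic graph selector associated to the Lagrangian given by the graph of $\phi_{\underline F}^t$ inside the relevant twisted cotangent model of $S^2 \times S^2$; when $\phi_F^1 = id$ this graph touches the diagonal at the image of our interior point, and the selector should be forced to vanish there, yielding the required vanishing of the Calabi integral. The main obstacle is precisely this vanishing conjecture for $f_{\underline{\mathbb F}}$: for smooth Lagrangian graphs the graph-selector theory is standard, but a topological Hamiltonian loop produces only a $C^0$-Lagrangian and the Floer continuation arguments must be carried through the limit uniformly in regularity. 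My plan is to verify the vanishing rigorously for the subclass of \emph{weakly graphical} topological Hamiltonian loops, where the Lagrangian graph is controllable as a Lipschitz section and the selector admits a direct generating-function description via the Oh--M\"uller framework, and to treat the general $C^0$ vanishing of $f_{\underline{\mathbb F}}$ as the central open conjecture driving this nonsimpleness program.
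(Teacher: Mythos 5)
Your overall architecture matches the paper's scheme closely: define $\overline{\Cal}^{path}$ on the space of topological Hamiltonian paths via the Hamiltonian integral, reduce well-definedness of the descended map to the vanishing of $\overline{\Cal}^{path}$ on topological Hamiltonian loops, establish an Alexander isotopy in the hameotopy category, embed $D^2\hookrightarrow S^2$ and invoke the basic phase function graph selector, and isolate the vanishing of $f_{\underline{\mathbb F}}$ as the central open conjecture, verified only for weakly graphical loops. You also correctly flag that this is not a complete proof but a reduction.

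However, two of your intermediate claims misstate what the Alexander isotopy and the basic phase function actually do, and these misstatements would break the argument if taken literally. First, you say that "the smooth case already exhibits invariance of the action integral under Alexander rescaling" and that this "allows us to deform $F$, without changing the value of its action integral, to a loop supported in an arbitrarily small neighborhood." This is false: Lemma \ref{lem:Callambdaa} shows $\overline{\Cal}^{path}(\lambda_a)=a^4\,\overline{\Cal}^{path}(\lambda)$, so the Calabi integral scales by $a^4$ and is certainly not invariant. Moreover, one cannot simply let $a\to 0$ and conclude vanishing by continuity, because the $s$-Hamiltonian of the Alexander contraction does \emph{not} converge in $L^{(1,\infty)}$-topology (Remark \ref{rem:sHamiltonian}), so the contraction is not a Hofer-controlled shrinking. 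In the paper the Alexander isotopy is used only to prove that every loop is hameotopically contractible (Theorem \ref{thm:alexander}), which reduces Conjecture \ref{conj:Calabiextend} to Conjecture \ref{conj:loop-vanishing}; it does not by itself yield any vanishing.

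Second, you describe the decisive step as: when $\phi_F^1=id$ the graph "touches the diagonal at the image of our interior point, and the selector should be forced to vanish there." For a loop the graph of $\phi_F^1$ \emph{is} the whole diagonal, not merely tangent at a point, and the basic phase function is not forced to vanish by this geometry. What Theorem \ref{thm:average=Cal} actually shows is that $f_{\underline{\mathbb F}}$ is a \emph{constant} equal to $\overline{\Cal}(F)/\vol_\omega(M)$; the content of Conjecture \ref{conj:fvanishing} is precisely that this constant is zero. This is a genuine conjecture because the Floer-theoretic selector carries the action history of the topological Hamiltonian, not merely the terminal Lagrangian. Beyond these two points your proposal agrees with the paper's scheme, including the restriction of the rigorous part to weakly graphical loops.
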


For the study towards this conjecture, as we did in \cite{oh:hameo2}, we first define
a homomorphism on the path spaces
$$
\Cal^{path}(\lambda): \CP^{ham}(Symp(D^2,\del D^2),id) \to \R
$$
by
\be\label{eq:Calpath}
\Cal^{path}(\lambda) = \int_0^1 \int_{D^2} H(t,x)\, \Omega\, dt.
\ee
We will also denote this average by $\Cal(H)$ depending on the circumstances.
Based on these facts (1) and (2), we can represent $\phi= \phi_H^1$
for the time-one map $\phi_H^1$ of
a time-dependent Hamiltonian $H = H(t,x)$ supported in the interior.
Then based on (3) and some standard calculations in Hamiltonian
geometry using the integration by parts, one proves that
this integral does not depend on the choice of Hamiltonian $H \mapsto \phi$.
Therefore it descends to $Ham(D^2,\del D^2) = Diff^\Omega(D^2,\del D^2)$. Then another
application of Stokes' formula, one can prove that this latter definition
indeed coincides with that of Definition \ref{defn:Cal-defn}. (See \cite{banyaga} for its
proof.)

It is via this second definition how the author attempts to extend the classical
Calabi homomorphism $\Cal: Ham(D^2,\del D^2) \to \R$
to its topological analog $\overline\Cal: Hameo(D^2,\del D^2) \to \R$.
In \cite{oh:hameo2}, the definition \eqref{eq:Calpath} is extended to a homomorphism
$$
\overline{\Cal}^{path}:\CP^{ham}(Sympeo(D^2,\del D^2),id) \to \R
$$
on the set $\CP^{ham}(Sympeo(D^2,\del D^2),id)$ of topological Hamiltonian paths.
(See section 2 for the precise definition.)
Here following the notation from \cite{oh:hameo1},
we denote by $Sympeo(D^2,\del D^2)$ the $C^0$-closure of $Symp(D^2,\del D^2)$.
Gromov-Eliashberg's $C^0$ symplectic rigidity theorem \cite{eliash:front} states
$$
Diff(D^2,\del D^2) \cap Sympeo(D^2,\del D^2) = Symp(D^2,\del D^2).
$$
In \cite{oh:hameo2,oh:simpleD2}, it is shown that
a proof of descent of $\overline{\Cal}^{path}$ to the group
$$
Hameo(D^2,\del D^2):= ev_1(\CP^{ham}(Sympeo(M,\omega),id))
$$
of Hamiltonian homeomorphisms (or more succinctly \emph{hameomorphisms})
is reduced to the following extension result of Calabi homomorphism.

One important ingredient in our scheme towards the proof of Conjecture \ref{conj:Calabiextend},
which itself has its own interest, is the existence of
the Alexander isotopy in the topological Hamiltonian category.
Recall that the well-known Alexander isotopy on the disc $D^2$ exists
in the homeomorphism category but not
in the differentiable category. We will establish that such an Alexander isotopy
defines contractions of topological Hamiltonian loops to the
identity constant loop in the topological Hamiltonian category.

\begin{thm}[Alexander isotopy; Theorem \ref{thm:alexander}]\label{thm:alex}
Any topological Hamiltonian loop in $Hameo(D^2,\del D^2)$ is
contractible to the identity loop via topological Hamiltonian homotopy of loops.
\end{thm}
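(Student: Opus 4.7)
\emph{Strategy.} I would translate the classical Alexander trick into the setting of generating Hamiltonians by a space-time rescaling, and then verify that the rescaling is compatible with the Hamiltonian topology of \cite{oh:hameo1} \emph{uniformly in the homotopy parameter}. The point is that, although the Alexander family is not smooth at the degenerate parameter value, continuity in the Hamiltonian topology is all that is required, and this continuity is precisely what the rescaling delivers.

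\emph{Construction.} Let $\lambda$ be a topological Hamiltonian loop on $D^2$ with generating Hamiltonian $H$, approximated by smooth normalized Hamiltonian loops $H_i \to H$ in Hamiltonian topology, i.e.\ $\|H_i - H\|_{(1,\infty)} \to 0$ and $\sup_t d_{C^0}(\phi_{H_i}^t, \phi_H^t) \to 0$, with $\supp H_i \subset \Int D^2$ and $\phi_{H_i}^1 = id$. For $s \in (0,1]$ set
$$
H_i^s(t,x) := \begin{cases} s^2\, H_i(t,\, x/s), & |x| \le s,\\ 0, & s < |x| \le 1,\end{cases}
$$
with $H_i^0 \equiv 0$, and define $H^s$ analogously from $H$. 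A direct computation using $X_{H^s}(t,y) = s\, X_H(t,\, y/s)$ shows that the flow of $H_i^s$ is the classical Alexander-rescaled loop
$$
\phi_{H_i^s}^t(x) = \begin{cases} s\, \phi_{H_i}^t(x/s), & |x| \le s,\\ x, & s < |x| \le 1,\end{cases}
$$
which, for each fixed $s \in (0,1]$, is a smooth Hamiltonian loop equal to $\phi_{H_i}^t$ at $s = 1$ and to the constant identity loop at $s = 0$. Set $\lambda^s := (\phi_{H^s}, H^s)$.

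\emph{Verification.} The key quantitative observations are
\begin{align*}
\|H_i^s - H^s\|_{(1,\infty)} &= s^2\, \|H_i - H\|_{(1,\infty)},\\
\sup_t d_{C^0}\bigl(\phi_{H_i^s}^t,\, \phi_{H^s}^t\bigr) &\le s\, \sup_t d_{C^0}\bigl(\phi_{H_i}^t,\, \phi_H^t\bigr),
\end{align*}
both of which tend to $0$ as $i \to \infty$ \emph{uniformly in} $s \in [0,1]$. This says first that each $\lambda^s$ is a bona fide topological Hamiltonian loop, and second that by a diagonal subsequence $i = i(s)$ one produces a single smooth two-parameter family $\{H_{i(s)}^{s}\}$ witnessing $s \mapsto \lambda^s$ as a continuous path in the space of topological Hamiltonian loops in the sense of \cite{oh:hameo1}. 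Since $\lambda^1 = \lambda$ and $\lambda^0$ is the constant identity loop, this path is the desired Alexander contraction.

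\emph{Main obstacle.} The substantive step is not the pointwise-in-$s$ construction, which is just the classical Alexander trick combined with the rescaling identity above, but the verification that the assembled family $s \mapsto \lambda^s$ is a genuine topological Hamiltonian homotopy of loops, jointly continuous in $(s,t)$ in the Hamiltonian topology. The uniform-in-$s$ contraction by the factors $s^2$ and $s$ is exactly the input needed; it also delivers continuity at the degenerate endpoint $s = 0$, where $\|H^s\|_{(1,\infty)} = s^2 \|H\|_{(1,\infty)} \to 0$ and the rescaled flows collapse uniformly to the identity. The support condition $\supp H^s \subset D_s \subset \Int D^2$ is automatic, so normalization on $(D^2, \del D^2)$ is preserved throughout.
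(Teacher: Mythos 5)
Your construction is the right one at the level of the objects (the rescaled family $H_i^s(t,x) = s^2 H_i(t,x/s)$ on $D^2(s)$ and its flow $\phi_{H_i^s}^t(x) = s\phi_{H_i}^t(x/s)$), and the quantitative estimates $\|H_i^s - H^s\| = s^2\|H_i - H\|$ and $\sup_t d_{C^0}(\phi_{H_i^s}^t,\phi_{H^s}^t) \le s\,\sup_t d_{C^0}(\phi_{H_i}^t,\phi_{H}^t)$ are correct and are indeed the mechanism behind the convergence. However there is a genuine gap: you have not verified that the two-parameter family you produce satisfies the \emph{smoothness} requirement in Definition \ref{defn:hameotopy}, and in fact the naive Alexander family does not. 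The definition of hameotopy demands a sequence of \emph{smooth} maps $\Lambda_j:[0,1]^2 \to Ham(M,\omega)$, i.e.\ $(s,t,x) \mapsto \Lambda_j(s,t)(x)$ must be jointly $C^\infty$ on all of $[0,1]^2 \times M$. But for fixed $i$ the map $(s,x) \mapsto s^2 H_i(t,x/s)$ fails to be $C^\infty$ at $(s,x)=(0,0)$: the second $s$-derivative along a ray $x = cs$ tends to $2H_i(t,c) - 2c\cdot \nabla H_i(t,c) + c^\top D^2 H_i(t,c)\,c$, a limit that depends on the direction $c$. This is precisely what Remark \ref{rem:sHamiltonian} in the paper is warning about. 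Your phrase ``by a diagonal subsequence $i = i(s)$ one produces a single smooth two-parameter family $\{H_{i(s)}^s\}$'' does not resolve this: a function $s \mapsto i(s)$ does not produce a sequence of families, and choosing $i$ depending on $s$ cannot restore smoothness in $s$ at the degenerate endpoint in any case, since the pathology occurs for every fixed $i$.

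The paper closes this gap by a reparametrization device you would also need. It fixes $\e_i \searrow 0$ and a smooth monotone surjection $\chi_i:[0,1]\to[\e_i,1]$ with $\chi_i \to id_{[0,1]}$ in the appropriate sense, and sets
$$
\Lambda_{i,\e_i}(s,t) := \lambda_{i,\chi_i(s)}(t,\cdot)\circ \lambda_{i,\e_i}^{-1}(t,\cdot),
$$
where $\lambda_{i,a}$ denotes your $\phi_{H_i^a}$. Because $\chi_i(s) \ge \e_i > 0$, each $\Lambda_{i,\e_i}$ only ever evaluates the Alexander rescaling at parameters bounded away from zero, so it is genuinely smooth on $[0,1]^2$; the post-composition by $\lambda_{i,\e_i}^{-1}$ ensures $\Lambda_{i,\e_i}(0,t) \equiv id$ as required by \eqref{eq:Hj0}. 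One then verifies condition (3) of Definition \ref{defn:hameotopy} by the same $s^2$-scaling you already computed, applied to the developed Hamiltonians $\Dev(\Lambda_{i,\e_i}(s,\cdot))$ via the composition formula $(H\#\overline G)_t = H_t - G_t\circ\phi_G^t(\phi_H^t)^{-1}$. So your estimate is the right engine, but without the $\chi_i$ regularization the approximating families you write down do not belong to the admissible class, and the proof does not go through as stated.
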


\subsection{Basic phase function and Calabi invariant}
\label{subsec:basic-Calabi}

The scheme of the proof of Conjecture \ref{conj:Calabiextend} we propose
is based on the following conjectural result of the \emph{basic phase function}
introduced in \cite{oh:jdg}.  This conjecture is also a crucial ingredient
needed in the proof of homotopy invariance of the spectral invariance of
topological Hamiltonian paths laid out in \cite{oh:homotopy}.
Explanation of this conjecture is now in order.

Recall the classical action functional on $T^*N$ for an arbitrary compact manifold $N$ is defined as
$$
\CA^{cl}_H(\gamma) = \int \gamma^*\theta - \int_0^1 H(t,\gamma(t))\, dt
$$
on the space $\CP(T^*N)$ of paths $\gamma:[0,1] \to T^*N$, and its first variation formula
is given by
\be\label{eq:1stvariation}
d\CA^{cl}_H(\gamma)(\xi) = \int_0^1 \omega(\dot\gamma - X_H(t,\gamma(t)), \xi(t))\,dt
- \langle \theta(\gamma(0)),\xi(0) \rangle + \langle \theta(\gamma(1)), \xi(1) \rangle.
\ee
The basic phase function graph selector is canonical in that the assignment
$$
H \mapsto f_H; \quad C^\infty([0,1] \times T^*N;\R) \to C^0(N)
$$
varies continuously in (weak) Hamiltonian topology of $C^\infty([0,1] \times T^*N;\R)$
\cite{oh:hameo1,oh:lag-spectral}.
The construction $f_H$ in \cite{oh:jdg} is given by considering the Lagrangian pair
$$
(o_N, T^*_qN), \quad q \in N
$$
and its associated Floer complex $CF(H;o_N, T^*_qN)$ generated by
the Hamiltonian trajectory $z:[0,1] \to T^*N$ satisfying
\be\label{eq:Hamchordeq1}
\dot z = X_H(t,z(t)), \quad z(0) \in o_N, \, z(1) \in T^*_qN.
\ee
Denote by $\CC hord(H;o_N,T^*_qN)$ the set of
solutions of \eqref{eq:Hamchordeq1}. The differential $\del_{(H,J)}$ on $CF(H;o_N, T^*_qN)$ is
provided by the moduli space of solutions of the perturbed Cauchy-Riemann equation
\be\label{eq:CRHJqN}
\begin{cases}
\dudtau + J\left(\dudt - X_H(u) \right) = 0 \\
u(\tau,0) \in o_N, \, u(\tau,1) \in T^*_qN.
\end{cases}
\ee
The resulting spectral invariant $\rho^{lag}(H;[q])$ is to be defined by the mini-max
value
$$
\rho^{lag}(H;[q]) = \inf_{\alpha \in [q]}\lambda_H(\alpha)
$$
where $[q]$ is a generator of the homology group $HF(o_N, T^*_qN) \cong \Z$.
The basic phase function $f_H: N \to \R$ is then defined by $f_H(q) = \rho^{lag}(H;[q])$ first for
generic $q \in N$ and then extending to the rest of $M$ by continuity.
(See \cite{oh:jdg} for the detailed construction and section \ref{sec:basicphase} of the
present paper for a summary.)

Next we relate the basic phase function to the Calabi invariant on the two-disc as follows.
Let $F$ be a topological Hamiltonian
generating a topological Hamiltonian path $\phi_F$ on the
2-disc $D^2$ with $\supp F \subset \Int D^2$.
We consider an approximating sequence $F_i$
with $\supp F_i \subset \Int D^2$. We embed $D^2$ into $S^2$ as the upper
hemisphere and then extend $F_i$ canonically to whole $S^2$ by zero.

We now specialize the above discussion on the basic phase function to the cases of the Lagrangianization of symplectic
diffeomorphisms, i.e., consider their graphs
$$
\Graph \phi=\{(\phi(x),x) \mid x \in S^2\} \subset S^2 \times S^2.
$$
Applying this to $\phi_{\mathbb F_i}^t$ and noting $\supp \phi_{\mathbb F_i}^t \subset D^2_+ \times D^2_+$,
we obtain
$$
\Graph \phi_{F_i}^t \bigcap \Delta \supset \Delta_{D^2_-} \bigcup \Delta_{D^2_+ \setminus D^2_+(1-\delta)}
$$
for some $\delta > 0$ for all $t \in [0,1]$, independently of sufficiently large $i$'s but depending only on $F$.
(See \cite{oh:hameo1} or Definition \ref{defn:directlimit}
of the present paper for the precise definition of approximating sequence on open manifolds.)
Then we consider the normalization $\underline{F_i}$ of $F_i$ on $S^2$ and define Hamiltonian
$$
\underline{\mathbb F_i} (t,{\bf x}): = \chi({\bf x})\, \underline{F_i}(t,x), \quad {\bf x} = (x,y)
$$
on $T^*\Delta$ with a slight abuse of notation for $\underline{\mathbb F_i}$, where
$\chi$ is a cut-off function such that $\chi \equiv 1$ on a neighborhood $V_\Delta$ of $\Delta$
with
$$
\supp \phi_F \subset V_\Delta \subset \overline V_\Delta \subset S^2 \times S^2 \setminus \Delta.
$$
Two kinds of the associated generating functions, denoted by $\widetilde h_{\mathbb F_i}$ and $h_{\mathbb F_i}$ respectively,
are given by
\be\label{eq:tildehK1}
\widetilde h_{\mathbb F_i}({\bf q}) = \CA^{cl}_{\mathbb F}\left(z^{\bf q}_{\mathbb F_i}\right), \quad
h_{\mathbb F_i}({\bf x})=  \CA^{cl}_{\mathbb F_i}\left(z_{\bf x}^{\mathbb F_i}\right),
\ee
where the Hamiltonian trajectories $z^{\bf q}_{\mathbb F_i}$ and $z_{\bf x}^{\mathbb F_i}$ are defined by
\beastar
z^{\bf q}_{\mathbb F_i}(t) &  =  & \phi_{\mathbb F_i}^t({\bf q}), \, {\bf q} \in o_\Delta\\
z_{\bf x}^{\mathbb F_i}(t) & = & \phi_{\mathbb F_i}^t((\phi_{\mathbb F_i}^1)^{-1}({\bf x})), \, {\bf x} \in
\phi_{\mathbb F_i}^1(o_\Delta).
\eeastar
We note that $z^{\bf q}_{\mathbb F_i}(0) = {\bf q}$ and $z_{\bf x}^{\mathbb F_i}(1) = {\bf x}$.
Later we will review the definition from \cite{oh:jdg,oh:lag-spectral} of
the basic phase function $f_{\mathbb F_i}$ and the Lagrangian selector $\sigma_{\mathbb F_i}$.
These maps have the properties that
\be\label{eq:fK1}
f_{\mathbb F_i}  = h_{\mathbb F_i}\circ \sigma_{\mathbb F_i}
\ee
and
$
\sigma_{\mathbb F_i}({\bf q}) = ({\bf q}, df_{\mathbb F_i}({\bf q})) \in T^*\Delta
$
whenever $df_{\mathbb F_i}({\bf q})$ exists. This ends the review of construction of
basic phase function.

The following theorem exhibits the relationship between the limit of
Calabi invariants and that of the basic phase function.

\begin{thm}[Theorem \ref{thm:average=Cal}]\label{thm:fFi=Cal} Let $(M,\omega)$ be an
arbitrary closed symplectic manifold. Let $U = M \setminus B$ where $B$ is a closed subset of nonempty interior.
Let $\lambda = \phi_F$ be any engulfed topological Hamiltonian loop
in $\CP^{ham}(Sympeo_U(M,\omega),id)$ with $\phi_{F_i}^t \equiv id$ on $B$.  Then
\be\label{eq:limitfHi}
\lim_{i \to \infty} f_{\underline{\mathbb F_i}}(x) =  \frac{\overline{\Cal}_U(F)}{\vol_\omega(M)}
\ee
uniformly over $x \in M$, for any approximating sequence $F_i$ of $F$. In particular,
the limit function $f_{\underline{\mathbb F}}$ defined by
$f_{\underline{\mathbb F}}(x): = \lim_{i \to \infty} f_{\underline{\mathbb F_i}}(x)$ is constant.
\end{thm}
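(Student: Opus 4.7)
The plan is to reduce the theorem to two claims: (i) at a reference point $x_0 \in \Int B$ where the flow of every approximant is trivial, the basic phase function equals $\Cal^{path}(F_i)/\vol_\omega(M)$ by a direct action computation; and (ii) the oscillation of $f_{\underline{\mathbb F_i}}$ on $M$ vanishes as $i \to \infty$ because $F$ is a loop. Combining these two yields the claimed uniform convergence.

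For (i), fix $x_0 \in \Int B$ and set ${\bf q}_0 = (x_0, x_0) \in o_\Delta$ under the Weinstein identification. Since $\phi_{F_i}^t \equiv id$ on $B$, the constant path $z(t) \equiv {\bf q}_0$ is a Hamiltonian chord in $\CC hord(\underline{\mathbb F_i}; o_\Delta, T^*_{x_0}\Delta)$. Using $\int z^*\theta = 0$ (since $\dot z \equiv 0$), $\chi({\bf q}_0) = 1$, and $F_i(t, x_0) = 0$ (which forces $\underline{F_i}(t, x_0) = -c_i(t)$ with $c_i(t) := \vol_\omega(M)^{-1}\int_M F_i(t,\cdot)\,\omega$), its classical action is
\be\label{eq:action-computation}
\CA^{cl}_{\underline{\mathbb F_i}}(z) = -\int_0^1 \underline{F_i}(t, x_0)\, dt = \int_0^1 c_i(t)\, dt = \frac{\Cal^{path}(F_i)}{\vol_\omega(M)}.
\ee
For $i$ large, $\phi_{\underline{\mathbb F_i}}^1(o_\Delta)$ is a $C^0$-small perturbation of $o_\Delta$, so near ${\bf q}_0$ this constant chord is the only element of $\CC hord(\underline{\mathbb F_i}; o_\Delta, T^*_{x_0}\Delta)$; it is therefore the one picked out by the Lagrangian selector $\sigma_{\underline{\mathbb F_i}}$, giving $f_{\underline{\mathbb F_i}}(x_0) = \Cal^{path}(F_i)/\vol_\omega(M)$.

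For (ii), apply the first variation formula \eqref{eq:1stvariation} to the family of trajectories $z_{\bf x}^{\underline{\mathbb F_i}}$ to obtain the standard identity $d h_{\underline{\mathbb F_i}} = \theta|_{L_i}$ on the time-$1$ Lagrangian $L_i := \phi_{\underline{\mathbb F_i}}^1(o_\Delta)$, using that $\theta$ vanishes on $o_\Delta$. Combined with $f_{\underline{\mathbb F_i}} = h_{\underline{\mathbb F_i}} \circ \sigma_{\underline{\mathbb F_i}}$, this yields
\be
f_{\underline{\mathbb F_i}}(q') - f_{\underline{\mathbb F_i}}(q) = \int_\gamma \sigma_{\underline{\mathbb F_i}}^*\theta
\ee
for any smooth path $\gamma \subset \Delta$ from $q$ to $q'$ lying in the graphical regime of $L_i$. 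Since $F_i \to F$ in Hamiltonian topology forces $\phi_{F_i}^1 \to id$ uniformly, $L_i \to o_\Delta$ in $C^0$ and $\sigma_{\underline{\mathbb F_i}}$ converges uniformly to the zero section; because $\theta|_{o_\Delta} = 0$, the right-hand side tends to $0$ uniformly in $q, q'$. Combined with Step (i) this proves $f_{\underline{\mathbb F_i}} \to \overline{\Cal}_U(F)/\vol_\omega(M)$ uniformly, so the limit function $f_{\underline{\mathbb F}}$ is constant.

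The hard part will be making the convergence $\sigma_{\underline{\mathbb F_i}} \to o_\Delta$ quantitative and \emph{global}. The Lagrangian selector can exhibit shocks and fail to be smooth, and where $L_i$ fails to be graphical the selector may jump between branches, so the pullback $\sigma^*\theta$ must be interpreted in a generalized sense. One must therefore invoke the $C^0$-continuity of Lagrangian spectral invariants from \cite{oh:jdg,oh:lag-spectral}, together with the engulfed hypothesis, which confines the flow to a fixed Weinstein neighborhood of $\Delta$ and guarantees that $L_i$ remains sufficiently close to $o_\Delta$ for the selector to track the near-trivial chord family throughout $\Delta$ for all large $i$.
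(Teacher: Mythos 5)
Your two-step decomposition is precisely the structure of the paper's proof: fix a point in $\Int B$ where the flows of the approximants are trivial to compute the limiting value via the constant chord, then use the loop hypothesis to force the oscillation of $f_{\underline{\mathbb F_i}}$ to zero. Both steps ultimately rest on the same observation you correctly single out, namely that since $\phi_F$ is a \emph{loop}, $\phi_{F_i}^1 \to id$ uniformly.

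Where you leave gaps, the paper fills them in two slightly different ways. In step (i), the assertion that the constant chord is ``the only element'' of $\CC hord(\underline{\mathbb F_i};o_\Delta,T^*_{x_0}\Delta)$ needs justification: a chord ending at some point of $L_i \cap T^*_{x_0}\Delta$ (which is indeed near $(x_0,0)$ since $L_i$ is Hausdorff-close to $o_\Delta$) could a priori start at a point of $o_\Delta$ far away, travel through the region where $F_i \neq 0$, and hence not be constant. The paper instead invokes the spectrality property of $f_{\underline{\mathbb F_i}}$ to obtain \emph{some} chord realizing the value, and then uses $\phi_{F_i}^1 \to id$ to deduce that this chord's starting point $(q',q')$ satisfies $d(q',x_0) < \delta/2$, which places $q'$ in a region where $\phi_{F_i}^t \equiv id$ and forces the chord to be constant regardless of whether it is the unique one. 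You would need this same localization argument to make your ``only element'' claim rigorous, and it is essentially the same use of the loop hypothesis that powers your step (ii). In step (ii), the paper avoids the shocks/graphicality issue you flag as ``the hard part'' entirely: instead of integrating $\sigma_{\underline{\mathbb F_i}}^*\theta$ along paths confined to graphical regimes, it uses that $f_{\underline{\mathbb F_i}}$ is Lipschitz with a.e.\ derivative bounded by $\overline d(\phi_{\underline{F_i}}^1,id) \to 0$ (i.e., the uniform bound \eqref{eq:dfL} on the graph selector), and concludes via the co-area formula that the oscillation of $f_{\underline{\mathbb F_i}}$ over $M$ goes to zero. This gives the uniform constancy without ever needing the selector to be single-branch or continuous, and is the cleaner route.
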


It is crucial for the equality \eqref{eq:limitfHi} to hold in the general case
that we are considering topological Hamiltonian \emph{loop}, not just a path.
(We refer readers to the proof of Theorem \ref{thm:average=Cal} to see how the loop property
is used therein. We also refer to the proof of Lemma 7.5 \cite{oh:lag-spectral} for
a similar argument used for a similar purpose.)

The following is the main conjecture to beat which was previously proposed
by the present author in \cite{oh:homotopy}.

\begin{conj}[Main Conjecture]\label{conj:fvanishing} Let $M = S^2$ be the 2 sphere with standard symplectic structure.
Let $\Lambda= \left\{\phi_{H(s)}^t\right\}_{(s,t) \in [0,1]^2}$ be a hameotopy
contracting a topological Hamiltonian loop
$\phi_F$ with $F = H(1)$ such that $H(s) \equiv id$ on $D^2_-$ where $D^2_-$ is
the lower hemisphere of $S^2$. Then
$
f_{\underline{\mathbb F}} = 0.
$
\end{conj}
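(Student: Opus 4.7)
The plan is to apply Theorem \ref{thm:fFi=Cal} in order to reduce the conjecture to the vanishing of the extended Calabi invariant on the contractible loop, and then to establish this vanishing via a smooth approximation of the hameotopy that preserves the loop property at every parameter value. With $M = S^2$, $B = D^2_-$, and $U = D^2_+$, the hypothesis $H(s) \equiv id$ on $D^2_-$ makes Theorem \ref{thm:fFi=Cal} directly applicable and yields
\[
f_{\underline{\mathbb F}} \equiv \frac{\overline{\Cal}_U(F)}{\vol_\omega(S^2)}
\]
as a constant function on $S^2$. Hence the conjecture reduces to $\overline{\Cal}_U(F) = 0$. More usefully, applying the same identification along the entire hameotopy reduces the question to the vanishing of $c(s) := \overline{\Cal}_U(H(s))$ for all $s \in [0,1]$; note that $c(0) = 0$ trivially.

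The core of the strategy is to construct, from the topological hameotopy $\Lambda$, a smooth two-parameter family of Hamiltonians $\{H_i(s,t)\}$ on $S^2$ with the following properties: (i) $H_i(s,\cdot) \to H(s,\cdot)$ in Hamiltonian topology, uniformly in $s$; (ii) $\supp H_i(s,\cdot) \subset D^2_+$; and, crucially, (iii) for every $i$ and every $s$, the Hamiltonian $H_i(s,\cdot)$ generates a genuine smooth Hamiltonian loop, i.e., $\phi_{H_i(s)}^1 = id$. Given such a family, Banyaga's descent of $\Cal^{path}$ from the path space $\CP^{ham}(Symp(D^2,\del D^2),id)$ to the group $Ham(D^2,\del D^2)$ gives $\Cal^{path}(H_i(s)) = \Cal(id) = 0$ for every $i$ and $s$. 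Passing to the limit by continuity of $\overline{\Cal}^{path}$ in Hamiltonian topology then yields $c(s) = \lim_i \Cal^{path}(H_i(s)) = 0$ for all $s$, and in particular at $s = 1$.

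The principal obstacle is the construction in (iii). Standard smoothings, for instance by mollification in $(s,t,x)$, produce smooth $H_i(s,t)$ for which $\phi_{H_i(s)}^1$ approaches $id$ in $C^0$ but is not equal to $id$, and the Calabi invariant is not $C^0$-continuous, so the discrepancy cannot simply be ignored. To restore the loop condition one must concatenate $H_i(s,\cdot)$ with a short-time Hamiltonian correction $G_i(s,\cdot)$ whose time-one map is $(\phi_{H_i(s)}^1)^{-1}$; the delicate point is to bound the size of this correction in Hofer (or $L^{(1,\infty)}$) norm uniformly in $s$, so that the corrected family remains close to $H(s)$ in Hamiltonian topology. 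I would attempt this by exploiting the rescaling structure of the Alexander-type isotopy furnished by Theorem \ref{thm:alex}: along the contraction the loop shrinks radially into a small neighborhood of the origin in $D^2_+$, and a quantitative relation between the $C^0$-size and the support diameter of $\phi_{H_i(s)}^1 - id$ should allow a Hofer-norm bound on the correction. Once this quantitative smoothing is in place, the remaining steps fold into standard approximation estimates in Hamiltonian topology. I expect this loop-preserving smoothing to be the genuine technical heart of the argument, and the reason the conjecture is stated rather than proved in general.
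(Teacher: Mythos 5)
Note first that the statement you were asked to prove is a conjecture, and the paper does not prove it in general either; it only establishes it for the special class of weakly graphical topological Hamiltonian loops (Theorem \ref{thm:graphicalloop}). Your first step is fine: applying Theorem \ref{thm:fFi=Cal} with $U = D^2_+$ and $B = D^2_-$ shows that $f_{\underline{\mathbb F}}$ is the constant $\overline{\Cal}_U(F)/\vol_\omega(S^2)$, so the conjecture is equivalent to $\overline{\Cal}_U(F)=0$, which is precisely the paper's Conjecture \ref{conj:loop-vanishing}. That reduction is faithful to the paper's scheme.

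The second half of your proposal is where it breaks down, and the break is conceptual rather than technical. The ``loop-preserving smoothing'' of your item (iii) --- producing smooth $H_i(s,\cdot)$ with $\phi_{H_i(s)}^1 = id$ exactly and converging to $H(s)$ in $L^{(1,\infty)}$ uniformly in $s$ --- is not a lemma sitting off to the side; it is an equivalent reformulation of the conjecture. The smooth approximating families $\Lambda_j$ of the hameotopy only give $\phi_{H_j(s)}^1 \to id$ in $C^0$, and correcting this defect by a Hamiltonian $G_j(s,\cdot)$ whose time-one map is $(\phi_{H_j(s)}^1)^{-1}$ gives no a priori control on $\|G_j(s)\|$ in Hofer norm. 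The well-known sequences $\phi_j \to id$ in $C^0$ with $\Cal(\phi_j) \equiv 1$, recalled at the start of the paper, show that $C^0$-smallness of a time-one map --- even with shrinking support --- carries no Hofer-norm information, so the ``quantitative relation between $C^0$-size and support diameter'' you hope for does not exist at this level of generality. What would be needed is extra structure on the hameotopy beyond $C^0$-convergence of its end map, and isolating that structure is exactly the open problem. The paper's partial result goes around the obstacle by a genuinely different route: under the graphicality hypothesis the basic phase function $f_{\underline{\mathbb G_i^a}}$ is everywhere smooth and solves a Hamilton--Jacobi equation along the Alexander isotopy, so $\int_\Delta f_{\underline{\mathbb F}}\,\pi_2^*\omega$ can be computed and shown to vanish directly (Theorem \ref{thm:graphicalloop}) --- an argument that your proposal, which bypasses the Floer machinery entirely, cannot replicate.
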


It turns out that this conjecture itself is strong enough to directly give rise to
a proof of Conjecture \ref{conj:Calabiextend}
in a rather straightforward manner with little usage of Floer homology argument
in its outset except a few functorial properties of the basic phase function that are
automatically carried by the Floer theoretic construction given in \cite{oh:jdg}.

We indicate validity of this conjecture by proving the conjecture for the following special
class consisting of weakly graphical topological Hamiltonian loops.

\subsection{Graphical Hamiltonian diffeomorphism on $D^2$ and its Calabi invariant}

We start with the following definition. We refer readers to Definition \ref{defn:engulfed}
for the definition of engulfed diffeomorphisms.

\begin{defn} Let $\Psi: U_\Delta \to \CV$ be a Darboux-Weinstein chart of the diagonal $\Delta \subset M \times M$
and denote $\pi_\Delta = \pi_\Delta^\Psi: U_\Delta \to \Delta$ to be the composition of $\Psi$ followed by the
canonical projection $T^*\Delta \to \Delta$.
\begin{enumerate}
\item We call an engulfed symplectic diffeomorphism $\phi: M \to M$ \emph{$\Psi$-graphical} if the projection
$\pi_\Delta|_{\Graph \phi} \to \Delta$ is one-to-one, and an engulfed symplectic isotopy
is $\{\phi^t\}$ \emph{$\Psi$-graphical} if
each element $\phi^t$ $\Psi$-graphical. We call a Hamiltonian $F=F(t,x)$ $\Psi$-graphical if
its associated Hamiltonian isotopy $\phi_F^t$ is $\Psi$-graphical.
\item We call a topological Hamiltonian loop $F$ is strongly (resp. weakly) $\Psi$-graphical, if it admits an
approximating sequence $F_i$ each element of which is $\Psi$-graphical (resp. whose time-one map $\phi_{F_i}^1$
is $\Psi$-graphical).
\end{enumerate}
\end{defn}
Denote by $F^a$ the time-dependent Hamiltonian generating the
path $t \mapsto \phi_F^{at}$. The statement (2) of this definition is
equivalent to saying that each $F^a$ is $\Psi$-graphical for $a \in [0,1]$.

We remark that any symplectic diffeomorphisms sufficiently $C^1$-close to the identity
is graphical, but not every $C^0$-close one. We also remark that
$\pi_\Delta|_{\Graph \phi}$ is surjective and hence a diffeomorphism
if $\phi$ is a $\Psi$-graphical symplectic diffeomorphism isotopic to the identity
via a $\Psi$-engulfed isotopy.

In 2 dimension, we prove the following interesting phenomenon.
We doubt that similar phenomenon occurs in high dimension. This theorem
will not be used in the proofs of main results of the present paper but
has its own interest.

\begin{thm}\label{thm:onetoone} Let $M$ be a closed 2 dimensional surface.
Suppose $\phi: M \to M$ is a $\Psi$-graphical symplectic diffeomorphism
isotopic to the identity via $\Psi$-graphical isotopy.
and let $\Graph \phi = \Image \alpha_\phi$ for a closed one-form $\alpha_\phi$.
Then for any $0 \leq r \leq 1$,
the projection $\pi_2: M \times M \to M$ restricts to a one-one map
to $ \Image r\, \alpha_\phi \subset M \times M$. In particular
\be\label{eq:tdfphit}
\Image r\, \alpha_\phi = \Graph \phi_r
\ee
for some symplectic diffeomorphism $\phi_r: M \to M$ for each $ 0 \leq r \leq 1$.
\end{thm}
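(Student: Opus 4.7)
The plan is to reduce the question to a Jacobian computation inside the Darboux--Weinstein chart $\Psi$ and then promote the resulting pointwise local-diffeomorphism statement to a global one via a degree argument. Set $L_r := \Psi^{-1}(\Image r\alpha_\phi) \subset M \times M$; this is Lagrangian for the product form $\pi_1^*\omega - \pi_2^*\omega$ because $r\alpha_\phi$ is closed. Near any point of $\Delta$, choose Darboux coordinates $(q,p)$ on $M$ and associated Darboux--Weinstein coordinates $(Q,P,\xi_Q,\xi_P)$ on $T^*\Delta$, namely $Q = (q_1+q_2)/2$, $P = (p_1+p_2)/2$, $\xi_Q = p_1 - p_2$, $\xi_P = q_2 - q_1$, so that $\pi_2\circ\Psi^{-1}(Q,P,\xi_Q,\xi_P) = (Q+\xi_P/2,\, P-\xi_Q/2)$. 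Writing $\alpha_\phi = a\,dQ + b\,dP$ in these coordinates, the composition $\pi_2\circ(\pi_\Delta|_{L_r})^{-1}\colon \Delta \to M$ becomes the map $(Q,P) \mapsto (Q+rb/2,\, P-ra/2)$.

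The next step is the Jacobian computation. Direct expansion yields
\[
\det = 1 + \tfrac{r}{2}(b_Q - a_P) + \tfrac{r^2}{4}(a_Q b_P - a_P b_Q).
\]
Closedness of $\alpha_\phi$ forces $b_Q = a_P$, killing the linear term in $r$ and leaving $1 + r^2 K/4$ with $K := a_Q b_P - a_P b_Q$; locally $\alpha_\phi = dS$ and $K$ equals the determinant of the Hessian of $S$. The analytic heart of the argument is to show that at $r=1$ this Jacobian is \emph{strictly positive} everywhere on $\Delta$, i.e.\ $K > -4$ pointwise. The $\Psi$-graphical isotopy $\{\phi^t\}$ supplies exactly this: for each $t \in [0,1]$ the map $\pi_2\circ(\pi_\Delta|_{\Graph\phi^t})^{-1}\colon \Delta \to M$ is a diffeomorphism (because $\phi^t$ is) varying continuously in $t$, and at $t=0$ it is the identity with Jacobian $+1$; by continuity and nonvanishing the Jacobian keeps its sign throughout and is therefore positive at $t=1$. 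Compactness of $M$ upgrades the pointwise bound to $K \geq -4+\varepsilon$ uniformly for some $\varepsilon > 0$, whence $1 + r^2 K/4 > 0$ for all $r \in [0,1]$ follows by inspection.

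Finally I would globalize. The Jacobian positivity makes $\pi_2|_{L_r}\colon L_r \to M$ an everywhere defined local diffeomorphism between compact $2$-manifolds, hence a covering map. The family $\{L_r\}_{r\in[0,1]}$ is a Lagrangian isotopy of $\Delta$ through sections of $T^*\Delta$, so $[L_r]=[\Delta]$ in $H_2(M\times M)$ and $\pi_{2*}[L_r] = [M]$, giving $\pi_2|_{L_r}$ degree $+1$. A degree-one covering of a connected manifold is a diffeomorphism, so $\phi_r$ is defined by $L_r = \Graph\phi_r$, and its symplecticity is immediate from $L_r$ being Lagrangian in the product form, establishing \eqref{eq:tdfphit}.

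The step I anticipate as the main obstacle is the orientation claim at $r=1$: $\phi$ being a diffeomorphism of $M$ only forces nonvanishing of the Jacobian of $\pi_2|_{L_1}$, whereas the positivity (and hence the pointwise bound $K > -4$) genuinely uses the $\Psi$-graphical \emph{isotopy} hypothesis, not just $\Psi$-graphicality of $\phi$ itself. Once that input is secured, the remaining ingredients---the vanishing of the linear term via closedness of $\alpha_\phi$ and the covering/degree passage from local to global diffeomorphism---are routine, and this is where I expect the genuinely two-dimensional character of the phenomenon (reliance on closed $1$-forms and on counting preimages by degree) to show up.
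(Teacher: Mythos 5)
Your proof is correct and follows essentially the same route as the paper's: write $\psi_r = \pi_2\circ(\text{section by }r\alpha_\phi)$ in Darboux--Weinstein coordinates, show $\det d\psi_r > 0$ for all $r$ via the vanishing of the trace/linear term forced by closedness of $\alpha_\phi$ plus positivity at $r=1$ supplied by the $\Psi$-graphical isotopy, then close with the covering-of-degree-one argument. The only cosmetic difference is that the paper packages the Jacobian inequality as a lemma about $2\times2$ symmetric matrices ($\det(I-rjA)>0$ when $\det(I-jA)>0$), whereas you expand the determinant directly; the compactness-of-$M$ invocation in your version is harmless but unnecessary, since the pointwise bound $K>-4$ already gives $1+r^2K/4>0$ for every $r\in[0,1]$.
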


Finally we prove Conjecture \ref{conj:fvanishing} for the weakly graphical topological
Hamiltonian loop on $S^2$ that arises as follows.

\begin{thm}\label{thm:weak-graphical}
Conjecture \ref{conj:fvanishing} holds for any weakly graphical topological
Hamiltonian loop on $S^2$ arising from one on $D^2$
as in subsection \ref{subsec:basic-Calabi}.
\end{thm}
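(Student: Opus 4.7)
The plan is to leverage the weak graphicality hypothesis to compute $f_{\underline{\mathbb F_i}}$ explicitly as a classical generating function and show that this generating function converges uniformly to $0$.

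First, by weak graphicality, pick an approximating sequence $F_i$ of $F$ whose time-one maps $\phi_{F_i}^1$ are $\Psi$-graphical. Since $\phi_F$ is a loop on $D^2$, we have $\phi_F^1 = id$ on $D^2$, and hence on $S^2$ after extension by zero over $D^2_-$; consequently $\phi_{F_i}^1 \to id$ in the $C^0$ topology. For $i$ large, $\Graph \phi_{F_i}^1 \subset U_\Delta$ and $\Psi(\Graph \phi_{F_i}^1)$ is a Lagrangian section of $T^*\Delta \to \Delta$, so it equals $\Image \alpha_i$ for some closed 1-form $\alpha_i$ on $\Delta$. Because $\Delta \cong S^2$ satisfies $H^1(\Delta;\R) = 0$, we may write $\alpha_i = dS_i$ with the normalization $\int_\Delta S_i\,\Omega = 0$ fixing $S_i$ uniquely.

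Second, I would establish that $f_{\underline{\mathbb F_i}} = S_i + c_i$ for an explicit constant $c_i$. The $\Psi$-graphicality forces each fiber $T^*_q\Delta$ to meet the Lagrangian $\Image dS_i$ transversely at the single point $(q, dS_i(q))$, so the chord set $\CC hord(\underline{\mathbb F_i}; o_\Delta, T^*_q\Delta)$ is a singleton and no Floer min-max is needed to define $f_{\underline{\mathbb F_i}}(q)$. The Lagrangian selector $\sigma_{\underline{\mathbb F_i}}$ must then coincide with the global section $q \mapsto (q, dS_i(q))$; combining with the identities \eqref{eq:fK1} and $\sigma_{\underline{\mathbb F_i}}(q) = (q, df_{\underline{\mathbb F_i}}(q))$ from Section \ref{subsec:basic-Calabi}, one obtains $df_{\underline{\mathbb F_i}} = dS_i$ and therefore $f_{\underline{\mathbb F_i}} = S_i + c_i$, where $c_i$ is fixed by evaluating $\CA^{cl}_{\underline{\mathbb F_i}}$ along the unique chord and accounting for the mean-zero normalization of $\underline{F_i}$ on $S^2$.

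Third, I would pass to the limit. Uniform $C^0$ convergence $\phi_{F_i}^1 \to id$ within the fixed Darboux-Weinstein chart yields $dS_i \to 0$ uniformly on $\Delta$, and the mean-zero normalization on the compact surface $\Delta$ then forces $S_i \to 0$ uniformly. A parallel argument, together with the continuity of the basic phase function in Hamiltonian topology established in \cite{oh:lag-spectral}, gives $c_i \to 0$. Therefore $f_{\underline{\mathbb F}}(x) = \lim_i (S_i(x) + c_i) = 0$ uniformly in $x$, as required.

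The main obstacle lies in the second step, namely pinning down the value of $c_i$ and verifying $c_i \to 0$. One must trace how the cutoff $\chi$ defining $\underline{\mathbb F_i}$ on $T^*\Delta$ interacts with the classical action along the unique chord ending at $(q, dS_i(q))$, and check that the mean-zero normalization of $\underline{F_i}$ on $S^2$ is compatible with the mean-zero normalization of $S_i$ on $\Delta$. Once this identification is secured, the conclusion $f_{\underline{\mathbb F}} = 0$ reduces to the elementary $C^0$-rigidity of the identity in the graphical setting.
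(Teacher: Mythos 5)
Your reduction in Steps 1--2 is correct: graphicality of $\phi_{F_i}^1$ makes the chord set a singleton, so $f_{\underline{\mathbb F_i}} = S_i + c_i$ with $S_i$ a mean-zero primitive of the graph's one-form, and the $C^0$-convergence $\phi_{F_i}^1 \to id$ does force $dS_i \to 0$ uniformly, hence $S_i \to 0$ by compactness of $\Delta$.

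The gap is in Step 3, and it is not a technicality: the claim that ``a parallel argument \ldots gives $c_i \to 0$'' has no content, and in fact $c_i$ does \emph{not} go to zero for free. To see this, evaluate $f_{\underline{\mathbb F_i}}$ at a point $q \in o_{D^2_-}$, where $F_i \equiv 0$. The chord is constant and the action computes directly to $\Cal(F_i)/\vol(S^2)$ (this is exactly the computation inside the proof of Theorem~\ref{thm:average=Cal}). Since $S_i(q) \to 0$, you get $c_i \to \lim_i \Cal(F_i)/\vol(S^2) = \overline{\Cal}_U(F)/\vol(S^2)$. So the assertion $c_i \to 0$ is \emph{equivalent} to $\overline{\Cal}^{path}(\lambda) = 0$, which is precisely the content of the theorem. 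Your argument has reduced the problem to itself. Nothing about the $C^0$-geometry of a single time slice ($a=1$) can pin this constant down, because the action functional is not $C^0$-continuous.

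The paper escapes this circularity by a genuinely different mechanism: it uses the full Alexander isotopy $\Lambda_i(a,\cdot)$ (which exists because $\phi_F$ is a \emph{loop}, Theorem~\ref{thm:alex}), introduces the timewise basic phase function ${\bf f}_{\underline{\mathbb G_i}}(a,{\bf q})$ of the $a$-Hamiltonian $G_i$, and exploits that it solves the Hamilton--Jacobi equation \eqref{eq:ham-jacobi}. Integrating this over $\Delta$ gives $I_{G_i}(1) = \int_0^1\int_\Delta \underline{\mathbb G_i}(a, df_{\underline{\mathbb G_i^a}}({\bf q}))\,\pi_2^*\omega\,da$; graphicality (preserved under the Alexander rescaling by Proposition~\ref{prop:onetoonekappa}) is then used to change variables via the selector $\iota_{\phi^a_{\underline{\mathbb G_i}}}$, which converges to the identity, and the mean-zero normalization of $\underline{G_i}$ kills the integral in the limit. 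The conclusion $\int_\Delta f_{\underline{\mathbb F}}\,\pi_2^*\omega = 0$ combined with the constancy of $f_{\underline{\mathbb F}}$ from Theorem~\ref{thm:average=Cal} yields the vanishing. In short, what your approach is missing is the global-in-$a$ integration that trades the uncontrollable constant $c_i$ for the mean-zero condition on the Hamiltonian; that is where the loop hypothesis, the Alexander isotopy, and the Hamilton--Jacobi equation do their work.
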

The proof of this theorem strongly relies on Theorem \ref{thm:alex}.

An immediate corollary of Theorem \ref{thm:fFi=Cal} and \ref{thm:weak-graphical}
is the following vanishing result of Calabi invariant.

\begin{cor}
Suppose $\lambda = \phi_F$ is a weakly graphical topological Hamiltonian
loop on $D^2$. Then $\overline{\Cal}^{path}(\lambda) = 0$.
\end{cor}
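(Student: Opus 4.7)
The proof plan is to chain together Theorem \ref{thm:fFi=Cal} and Theorem \ref{thm:weak-graphical} via the embedding $D^2 \hookrightarrow S^2$ as the upper hemisphere. Given a weakly graphical topological Hamiltonian loop $\lambda = \phi_F$ on $D^2$, I would first produce a weakly graphical topological Hamiltonian loop on $S^2$ by taking the canonical extension of $F$ by zero on the lower hemisphere $D^2_-$, and then the normalization $\underline{F}$ as in Subsection \ref{subsec:basic-Calabi}. By hypothesis there exists an approximating sequence $F_i$ with each $\phi_{F_i}^1$ being $\Psi$-graphical, and (by Theorem \ref{thm:alex}) the extended loop on $S^2$ is contractible through a hameotopy $\Lambda = \{\phi_{H(s)}^t\}$ with $H(s) \equiv 0$ on $D^2_-$.

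Next, I would invoke Theorem \ref{thm:weak-graphical}, which is exactly Conjecture \ref{conj:fvanishing} in this setting, to conclude that the limit basic phase function satisfies
\[
f_{\underline{\mathbb F}} \equiv 0
\]
on $S^2$. On the other hand, Theorem \ref{thm:fFi=Cal}, applied with $M = S^2$, $B = D^2_-$, $U = \Int D^2_+$, and the engulfed loop $\lambda$, gives the uniform convergence
\[
\lim_{i\to\infty} f_{\underline{\mathbb F_i}}(x) \;=\; \frac{\overline{\Cal}_U(F)}{\vol_\omega(S^2)}
\]
over $x \in S^2$. Equating the two expressions for the limit forces $\overline{\Cal}_U(F) = 0$.

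Finally, I would verify the identification $\overline{\Cal}_U(F) = \overline{\Cal}^{path}(\lambda)$, which is essentially built into the construction of the extension: the normalized Hamiltonian $\underline{F_i}$ agrees with $F_i$ on $\Int D^2_+$ up to a time-dependent constant that integrates against $\Omega$ to give the total average used in \eqref{eq:Calpath}, and the extension to $D^2_-$ contributes nothing because $F_i$ vanishes there; passing to the limit on approximating sequences using the definition of $\overline{\Cal}^{path}$ from \cite{oh:hameo2} yields the equality. The only step that is more than bookkeeping is the last identification, and since it was already established in \cite{oh:hameo2,oh:simpleD2} as part of the definition of $\overline{\Cal}^{path}$, there is no genuine obstacle beyond applying the two theorems in succession.
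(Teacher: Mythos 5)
Your argument is correct and follows exactly the chain the paper intends when it calls this an ``immediate corollary'' of Theorem~\ref{thm:fFi=Cal} and Theorem~\ref{thm:weak-graphical}: extend by zero to $S^2$, invoke the Alexander-isotopy hameotopy so the hypotheses of Conjecture~\ref{conj:fvanishing} are met, read off $f_{\underline{\mathbb F}}\equiv 0$ from Theorem~\ref{thm:weak-graphical} and $f_{\underline{\mathbb F}}\equiv \overline{\Cal}_U(F)/\vol_\omega(S^2)$ from Theorem~\ref{thm:fFi=Cal}, and unwind the definitions to equate $\overline{\Cal}_U(F)$ with $\overline{\Cal}^{path}(\lambda)$. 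The paper does not spell out these steps, but your write-up matches the route it signposts.
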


Unraveling the definitions, this corollary establishes the main conjecture
with the additional graphicality hypothesis on $\phi_i$.

\begin{thm} Consider a sequence $\phi_i \in Ham(D^2,\del D^2)$ that satisfies the following conditions:
\begin{enumerate}
\item Each $\phi_i$ is graphical, and $\phi_i \to id$ in $C^0$-topology,
\item $\phi_i = \phi_{H_i}^1$ with convergent $H_i$ in $L^{(1,\infty)}$-topology.
\end{enumerate}
Then $\lim_{i \to \infty} \Cal(\phi_i) = 0$.
\end{thm}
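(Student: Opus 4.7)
The plan is to deduce this theorem as a direct unwinding of the preceding Corollary applied to the topological Hamiltonian loop generated by the sequence. First I would assemble the data into a limiting topological Hamiltonian loop. By hypothesis (2), $H_i$ is Cauchy in $L^{(1,\infty)}$; together with hypothesis (1), which pins the time-one maps at the identity, the sequence of smooth Hamiltonian paths $\{(\phi_{H_i}^t, H_i)\}$ forms a Cauchy sequence in the Hamiltonian topology of \cite{oh:hameo1} and converges to a topological Hamiltonian path $\phi_F$ with $\phi_F^1 = \mathrm{id}$. Hence $\phi_F$ is a topological Hamiltonian loop on $D^2$. Since each approximating time-one map $\phi_{H_i}^1 = \phi_i$ is graphical by hypothesis (1), the loop $\phi_F$ is weakly graphical in the sense of the definition above.

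Next I would invoke the Corollary (the one stated just before the theorem) to conclude $\overline{\Cal}^{path}(\phi_F) = 0$. The remaining task is to identify this with $\lim_i \Cal(\phi_i)$. By construction of the extension $\overline{\Cal}^{path}$ in \cite{oh:hameo2}, which is the continuous extension of $\Cal^{path}$ in Hamiltonian topology, we have
\[
\overline{\Cal}^{path}(\phi_F) \;=\; \lim_{i\to\infty} \Cal^{path}(H_i) \;=\; \lim_{i\to\infty} \int_0^1 \int_{D^2} H_i(t,x)\,\Omega\,dt,
\]
the second equality being the definition \eqref{eq:Calpath}. On the other hand, since each $\phi_i$ lies in $Ham(D^2,\del D^2)$ and $\Cal^{path}$ descends to $\Cal$ on $Ham(D^2,\del D^2)$ (this is precisely the classical descent statement recalled in the paragraph after Conjecture \ref{conj:Calabiextend}), we have $\Cal^{path}(H_i) = \Cal(\phi_i)$ for every $i$. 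Chaining the three equalities yields $\lim_i \Cal(\phi_i) = 0$.

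The main obstacle, and essentially the only nontrivial input, is verifying that hypotheses (1) and (2) together actually furnish a bona fide topological Hamiltonian loop in the Oh--M\"uller sense (so that \emph{the Corollary is applicable}). The $L^{(1,\infty)}$-convergence of $H_i$ gives the Hamiltonian-function half of Cauchyness, and the $C^0$-convergence of the endpoint $\phi_i \to \mathrm{id}$ identifies the limit as a loop; one must check that these together force the flow convergence $\phi_{H_i}^t \to \phi_F^t$ to be uniform in $(t,x)$, which is the condition needed to apply the Corollary. All other steps are formal: the graphicality passes from $\phi_i$ to the loop via the very definition of \emph{weakly graphical}, and the identification of $\Cal(\phi_i)$ with the Hamiltonian average is a classical smooth statement. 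Thus the theorem is essentially a corollary of Theorem \ref{thm:weak-graphical} via Theorem \ref{thm:fFi=Cal}.
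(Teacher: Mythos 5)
Your proposal is correct and matches the paper's own treatment: the paper states the theorem with the phrase ``Unraveling the definitions, this corollary establishes the main conjecture with the additional graphicality hypothesis,'' and your argument is precisely that unraveling --- assemble the $H_i$ into a weakly graphical topological Hamiltonian loop $\phi_F$, invoke the Corollary to get $\overline{\Cal}^{path}(\phi_F) = 0$, then unwind $\overline{\Cal}^{path}(\phi_F) = \lim_i \Cal^{path}(H_i) = \lim_i \Cal(\phi_i)$. You are also right to flag the one genuine subtlety: hypotheses (1) and (2) as literally stated give $L^{(1,\infty)}$-convergence of the Hamiltonians and $C^0$-convergence only of the time-one maps, whereas Definition~\ref{topflowdefn} requires locally uniform $C^0$-convergence of the entire paths $\phi_{H_i}^t$; the latter is not a formal consequence of the former, since Hofer-norm smallness does not control $C^0$-displacement. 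Hypothesis (2) must therefore be read, as the paper implicitly does, as convergence of the paths $\phi_{H_i}$ in the full Hamiltonian topology of Definition~\ref{defn:hamtopology}, under which your argument goes through without further work.
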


We hope to study elsewhere general engulfed topological
Hamiltonian loop dropping the graphicality condition.
\begin{rem}
Previously the author announced a `proof' of the nonsimpleness result in \cite{oh:simpleD2} \emph{modulo
the proof of Conjecture \ref{conj:fvanishing}} in which nonsimpleness is derived out of
the homotopy invariance of spectral invariants whose proof also strongly relied
on this vanishing result. Unlike the previously proposed scheme of the proof,
the current scheme does not rely on the homotopy invariance of spectral invariants
of topological Hamiltonian paths but more directly follows from the
above mentioned vanishing result.
\end{rem}

We thank M. Usher for his careful reading of the previous version of the
present paper and useful discussions in relation to the proof of Theorem \ref{thm:graphicalloop}.
We also take this opportunity to thank A. Fathi for explaining to us, during his visit
of KIAS in the summer of year 2004, how the question of extendability of the Calabi homomorphism on $Diff^\Omega(D^2,\del D^2)$
to $Hameo(D^2,\del D^2)$ is related to the non-simpleness of the area-preserving homeomorphism
group $Homeo^\Omega(D^2,\del D^2)$. We also thank much the anonymous referee for
pointing out an error in the proof of Theorem \ref{thm:graphicalloop}. We
newly add section \ref{sec:weak-alexander} in the present version to fix the error,
and revise the proof of Theorem \ref{thm:graphicalloop} accordingly.
We also thank her/him for making
many useful comments which we believe greatly improve our exposition of the paper.

\part{Calabi invariant and basic phase function}

\section{Calabi homomorphism $\overline{\Cal}^{path}$ on the path space}
\label{sec:CalabiD2}

\subsection{Hamiltonian topology and hamiltonian homotopy}
\label{subsec:top-flows}

In \cite{oh:hameo1}, M\"uller and the author introduced the notion of
Hamiltonian topology on the space
$$
\CP^{ham}(Symp(M,\omega),id)
$$
of Hamiltonian paths $\lambda:[0,1] \to Symp(M,\omega)$ with
$\lambda(t) = \phi_H^t$ for some time-dependent Hamiltonian $H$.
We would like to emphasize that we do \emph{not} assume that $H$
is normalized \emph{unless otherwise said explicitly}. This is because we need to
consider both compactly supported and mean-normalized Hamiltonians
and suitably transform one to the other in the course of the proof of
the main theorem of this paper.

We first recall the definition of this Hamiltonian topology.

We start with the case of closed $(M,\omega)$.
For a given continuous function $h: M \to \R$, we denote
$$
\osc(h) = \max h - \min h.
$$
We define the $C^0$-distance $\overline d$ on $Homeo(M)$ by the symmetrized
$C^0$-distance
$$
\overline d(\phi,\psi) = \max\left\{d_{C^0}(\phi,\psi), d_{C^0}(\phi^{-1},\psi^{-1})\right\}
$$
and the $C^0$-distance, again denoted by $\overline d$, on
$$
\CP^{ham}(Symp(M,\omega),id) \subset \CP(Homeo(M),id)
$$
by
$$
\overline d(\lambda,\mu) = \max_{t \in [0,1]} \overline d(\lambda(t),\mu(t)).
$$
The Hofer length of Hamiltonian path $\lambda = \phi_H$ is defined by
$$
\leng(\lambda) = \int_0^1 \osc(H_t)\, dt = \|H\|.
$$
Following the notations of \cite{oh:hameo1}, we denote by $\phi_H$ the
Hamiltonian path
$$
\phi_H: t \mapsto \phi_H^t; \, [0,1] \to Ham(M,\omega)
$$
and by $\Dev(\lambda)$ the associated normalized Hamiltonian
\be\label{eq:Dev}
\Dev(\lambda) := \underline H, \quad \lambda = \phi_H
\ee
where $\underline H$ is defined by
\be\label{eq:underlineH}
\underline H(t,x) = H(t,x) - \frac{1}{\vol_\omega(M)} \int_M H(t,x)\, \omega^n.
\ee

\begin{defn}\label{defn:hamtopology} Let $(M,\omega)$ be a closed symplectic
manifold. Let $\lambda, \, \mu$ be smooth Hamiltonian paths. The
\emph{Hamiltonian topology} is the metric topology induced by the
metric
\be\label{eq:strong} d_{ham}(\lambda,\mu): = \overline
d(\lambda,\mu) + \operatorname{leng}(\lambda^{-1}\mu).
\ee
\end{defn}

Now we recall the notion of topological Hamiltonian flows and
Hamiltonian homeomorphisms introduced in \cite{oh:hameo1}.

\begin{defn}[$L^{(1,\infty)}$ topological Hamiltonian flow]\label{topflowdefn} A continuous map
$\lambda: \R \to Homeo(M)$ is called a topological Hamiltonian flow
if there exists a sequence of smooth Hamiltonians $H_i: \R \times M
\to \R$ satisfying the following:
\begin{enumerate}
\item $\phi_{H_i} \to \lambda$ locally uniformly on $\R \times M$.
\item the sequence $H_i$ is Cauchy in the $L^{(1,\infty)}$-topology locally in time
and so has a limit $H_\infty$ lying in $L^{(1,\infty)}$ on any compact interval $[a,b]$.
\end{enumerate}
We call any such $\phi_{H_i}$ or $H_i$ an \emph{approximating sequence} of $\lambda$.
We call a continuous path $\lambda:[a,b] \to Homeo(M)$ a {\it
topological Hamiltonian path} if it satisfies the same conditions
with $\R$ replaced by $[a,b]$, and the limit $L^{(1,\infty)}$-function
$H_\infty$ called a \emph{$L^{(1,\infty)}$ topological Hamiltonian} or just
a \emph{topological Hamiltonian}.
\end{defn}

Following the notations from \cite{oh:hameo1}, we denote by $Sympeo(M,\omega)$ the
closure of $Symp(M,\omega)$ in $Homeo(M)$ with respect to the $C^0$-metric $\overline d$, and
by $\CH_m([0,1] \times M, \R)$ the set of mean-normalized
topological Hamiltonians, and by
\be\label{eq:ev1}
ev_1: \CP_{[0,1]}^{ham}(Sympeo(M,\omega),id) \to
Sympeo(M,\omega),id)
\ee
the evaluation map defined by
$
ev_1(\lambda) = \lambda(1).
$
By the uniqueness theorem of Buhovsky-Seyfaddini \cite{buh-sey}, we can extend the map
$\operatorname{Dev}$ given in \eqref{eq:Dev} to
$$
\overline{\operatorname{Dev}}: \CP_{[0,1]}^{ham}(Sympeo(M,\omega),id) \to
\CH_m([0,1] \times M, \R)
$$
in an obvious way. Following the notation of \cite{oh:hameo1,oh:hameo2}, we denote
the topological Hamiltonian path $\lambda = \phi_H$ when $\overline{\Dev}(\lambda) = \underline H$
in this general context.

\begin{defn}[Hamiltonian homeomorphism group] We define
$$
Hameo(M,\omega) = ev_1\left(\CP_{[0,1]}^{ham}(Sympeo(M,\omega),id)\right)
$$
and call any element therein a \emph{Hamiltonian homeomorphism}.
\end{defn}

The group property and its normality in $Sympeo(M,\omega)$ are proved in
\cite{oh:hameo1}.

\begin{thm}[\cite{oh:hameo1}]\label{thm:normality}
Let $(M,\omega)$ be a closed symplectic manifold.
Then $Hameo(M,\omega)$ is a normal subgroup of $Sympeo(M,\omega)$.
\end{thm}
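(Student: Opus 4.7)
The plan is to deduce both the subgroup property and normality by verifying, at the level of Definition \ref{topflowdefn}, that the two defining conditions — $C^0$-convergence of flows and $L^{(1,\infty)}$-Cauchyness of the generating Hamiltonians — propagate through the three operations: product, inverse, and conjugation. The underlying identities are $\phi_H^t \phi_K^t = \phi_{H\#K}^t$ with $(H\#K)(t,x) = H(t,x) + K(t,(\phi_H^t)^{-1}(x))$, the analogous formula $\overline H(t,x) = -H(t,\phi_H^t(x))$ for the inverse, and the crucial conjugation identity
\begin{equation*}
\psi\,\phi_H^t\,\psi^{-1} \;=\; \phi_{H\circ \psi^{-1}}^t, \qquad \psi \in Symp(M,\omega).
\end{equation*}

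For closure under products and inverses, given approximating sequences $H_i, K_i$ of topological Hamiltonians $H,K$, the candidates $H_i \# K_i$ and $\overline{H_i}$ yield the correct time-one limits $ev_1(\phi_H)\,ev_1(\phi_K)$ and $ev_1(\phi_H)^{-1}$ at the level of $C^0$-convergence of the flows. For the $L^{(1,\infty)}$-Cauchyness, the key elementary observation is that for any homeomorphism $\eta$ of $M$ and any $f \in L^\infty(M)$ one has $\|f\circ \eta\|_\infty = \|f\|_\infty$, so the spatial $L^\infty$-norm is invariant under the reparametrisations by $(\phi_{H_i}^t)^{\pm 1}$ appearing in the formulas; the error terms that arise (from comparing $\phi_{H_i}^t$ with $\phi_{H_j}^t$) are controlled by a density argument using uniform continuity of one fixed smooth $H_{j_0}$.

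For normality, let $\psi \in Sympeo(M,\omega)$ and $h = ev_1(\phi_H) \in Hameo(M,\omega)$. By definition of $Sympeo$ there exist $\psi_i \in Symp(M,\omega)$ with $\overline d(\psi_i,\psi) \to 0$; combined with an approximating sequence $H_i$ for $H$, set $K_i(t,x) := H_i(t,\psi_i^{-1}(x))$ so that $\phi_{K_i}^t = \psi_i \phi_{H_i}^t \psi_i^{-1}$. The $C^0$-convergence $\phi_{K_i}^t \to \psi \phi_H^t \psi^{-1}$ uniformly in $t$ is a routine triangle-inequality computation in $\overline d$; note also that if the $H_i$ are mean-normalized then so are the $K_i$, since each $\psi_i$ preserves $\omega^n$.

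The main obstacle is showing that $\{K_i\}$ is Cauchy in $L^{(1,\infty)}$, because the conjugating symplectic diffeomorphisms only converge in $C^0$. Decomposing
\begin{equation*}
K_i - K_j \;=\; (H_i - H_j)\circ \psi_i^{-1} \;+\; \bigl(H_j\circ \psi_i^{-1} - H_j \circ \psi_j^{-1}\bigr),
\end{equation*}
the first piece has $L^{(1,\infty)}$-norm equal to $\|H_i - H_j\|_{L^{(1,\infty)}}$ by the homeomorphism-invariance of $\|\cdot\|_\infty$ and hence tends to $0$ by hypothesis. For the second piece, given $\varepsilon > 0$ choose $j_0$ so large that $\|H_j - H_{j_0}\|_{L^{(1,\infty)}} < \varepsilon/3$ for every $j \ge j_0$, and then split further as
\begin{equation*}
(H_j - H_{j_0})\circ \psi_i^{-1} + \bigl(H_{j_0}\circ \psi_i^{-1} - H_{j_0}\circ \psi_j^{-1}\bigr) + (H_{j_0} - H_j)\circ \psi_j^{-1}.
\end{equation*}
The outer terms are again bounded by $\varepsilon/3$ in $L^{(1,\infty)}$ by the same invariance, while the middle term, being the composition difference of the fixed smooth (hence uniformly continuous) function $H_{j_0}$ with $C^0$-close homeomorphisms $\psi_i^{-1}, \psi_j^{-1}$, is bounded by $\varepsilon/3$ once $i,j$ are large enough. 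This delivers the required Cauchyness of $\{K_i\}$, so $\psi h \psi^{-1} = ev_1(\phi_K) \in Hameo(M,\omega)$, completing the proof.
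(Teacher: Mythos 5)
The paper does not prove this theorem here; it is cited directly to the Oh--M\"uller paper \cite{oh:hameo1}. Your reconstruction is correct and matches the standard proof from that reference: the conjugation identity $\psi\phi_H^t\psi^{-1}=\phi_{H\circ\psi^{-1}}^t$, the invariance of the spatial $\osc$-norm under precomposition with homeomorphisms, and the three-$\e$ argument against a fixed smooth approximant $H_{j_0}$ are exactly the devices used there, with the $C^0$-convergence of the conjugated flows resting (as you note implicitly) on the fact that the symmetrized metric $\overline d$ makes $Homeo(M)$ a topological group for compact $M$. One small point worth making explicit is that the middle term of your three-$\e$ split needs $C^0$-closeness of the inverses $\psi_i^{-1},\psi_j^{-1}$, which is available precisely because $\overline d$ is the symmetrized $C^0$-distance.
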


Especially when $\dim \Sigma = 2$, we have a smoothing result
\be\label{eq:smoothing}
Sympeo(\Sigma,\omega) = Homeo^\Omega(\Sigma)
\ee
of area-preserving homeomorphisms by area-preserving diffeomorphisms
(see \cite{oh:smoothing}, \cite{sikorav:smoothing} for a proof). Therefore combining this with the above
theorem, we obtain the following corollary, which is the starting point of our
research to apply continuous Hamiltonian dynamics to the study of
the simpleness question of the area-preserving homeomorphism group of $D^2$ (or $S^2$).

\begin{cor} Let $\Sigma$ be a compact surface with or without boundary and
let $\Omega$ be an area form of $\Sigma$, which we also regard as a symplectic form $\omega = \Omega$.
Then $Hameo(M,\omega)$ is a normal subgroup of $Homeo^\Omega(\Sigma)$.
\end{cor}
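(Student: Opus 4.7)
The plan is to deduce the corollary directly by combining the two results that are cited immediately before it: the normality statement of Theorem \ref{thm:normality} from \cite{oh:hameo1}, which gives $Hameo(M,\omega) \triangleleft Sympeo(M,\omega)$ for closed symplectic manifolds, together with the two-dimensional smoothing identification \eqref{eq:smoothing}, $Sympeo(\Sigma,\omega) = Homeo^\Omega(\Sigma)$. Once both are in hand, normality of $Hameo$ inside $Homeo^\Omega$ is a purely formal substitution of the right-hand side of \eqref{eq:smoothing} into the conclusion of Theorem \ref{thm:normality}.

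First I would handle the case of a closed surface $\Sigma$. Here $\omega = \Omega$ is literally a symplectic form, so Theorem \ref{thm:normality} applies verbatim and yields $Hameo(\Sigma,\omega) \triangleleft Sympeo(\Sigma,\omega)$. Invoking \eqref{eq:smoothing} (the theorem of \cite{oh:smoothing,sikorav:smoothing} that any area-preserving homeomorphism of a surface is a $C^0$-limit of area-preserving diffeomorphisms), the ambient group $Sympeo(\Sigma,\omega)$ coincides with $Homeo^\Omega(\Sigma)$, and normality transfers.

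For the case of a compact surface $\Sigma$ with boundary, the convention used throughout the paper is that all maps (and their $C^0$-approximating sequences) are supported in the interior $\Int \Sigma$. The cleanest way to reduce to the closed case is the doubling construction: let $D\Sigma = \Sigma \cup_{\partial \Sigma} \Sigma$ be the double, equipped with an area form that restricts to $\Omega$ on each half; extension by the identity on the other half gives injective group homomorphisms
\[
Homeo^\Omega(\Sigma,\partial\Sigma) \hookrightarrow Homeo^\Omega(D\Sigma), \qquad Hameo(\Sigma,\omega) \hookrightarrow Hameo(D\Sigma,\omega),
\]
the second of these by extending an approximating sequence of Hamiltonians by zero across $\partial \Sigma$ and using that this extension preserves the $L^{(1,\infty)}$-Cauchy condition and local uniform convergence in Definition \ref{topflowdefn}. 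Now the closed case applies to $D\Sigma$, so for any $\phi \in Hameo(\Sigma,\omega)$ and $g \in Homeo^\Omega(\Sigma,\partial\Sigma)$ the conjugate $g\phi g^{-1}$, viewed inside $D\Sigma$, lies in $Hameo(D\Sigma,\omega)$; since its support stays in $\Int \Sigma$, it lies in the image of the first embedding above, proving normality in $Homeo^\Omega(\Sigma,\partial\Sigma)$.

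The main point that has to be checked carefully, rather than a genuine obstacle, is that the approximating sequence used to witness $\phi \in Hameo(\Sigma,\omega)$ can be extended across the boundary so as to give an approximating sequence in the closed surface $D\Sigma$. This is where the convention that the Hamiltonians in Definition \ref{topflowdefn} are \emph{not} required to be normalized is crucial: we may take each $H_i$ to be compactly supported in $\Int \Sigma$, extend by zero to $D\Sigma$, and observe that both local uniform $C^0$-convergence of the flows and the $L^{(1,\infty)}$-Cauchy property are preserved under this extension. Once this verification is carried out, the corollary is immediate.
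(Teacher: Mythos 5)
For the closed case your argument coincides with the paper's: Theorem~\ref{thm:normality} gives $Hameo(\Sigma,\omega)\triangleleft Sympeo(\Sigma,\omega)$ and the smoothing identification \eqref{eq:smoothing} replaces $Sympeo(\Sigma,\omega)$ by $Homeo^\Omega(\Sigma)$. That part is fine.

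For the boundary case your route diverges from the paper's. The paper does not double; it simply asserts (referring forward to the discussion culminating in Definition~\ref{defn:directlimit}) that both ingredients --- the normality theorem and the smoothing result --- have counterparts for manifolds with boundary, and combines the boundary versions directly. Your doubling reduction is an attractive alternative because it only uses the closed case of Theorem~\ref{thm:normality}, but as written it has a real gap at the last step. You conclude that $g\phi g^{-1}$, viewed in $D\Sigma$, lies in $Hameo(D\Sigma,\omega)$ and is supported in $\Int\Sigma$, and from this infer it lies in the image of the embedding of $Homeo^\Omega(\Sigma,\partial\Sigma)$. That only shows $g\phi g^{-1}$ is an area-preserving homeomorphism of $\Sigma$ fixing a neighborhood of $\partial\Sigma$; it does \emph{not} show $g\phi g^{-1}\in Hameo(\Sigma,\partial\Sigma)$. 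The paper explicitly warns about exactly this point right after Definition~\ref{defn:directlimit}: $Hameo_c(U,\omega)$ is defined as a direct limit over compact $K\subset U$ and \emph{``is not necessarily the same as the set of $\lambda\in\CP^{ham}(Sympeo(M,\omega),id)$ with compact $\supp\lambda\subset U$.''} So membership in $Hameo(D\Sigma,\omega)$ plus support in $\Int\Sigma$ is not, on its face, membership in $Hameo(\Sigma,\partial\Sigma)$. To close the gap you would need to track the approximating sequence produced by the normality argument --- essentially, conjugate a compactly supported approximating sequence $\phi_{H_i}$ by smooth approximations $g_j$ of $g$ and observe that the supports $g_j(\supp H_i)$ can be trapped in a single compact $K'\subset\Int\Sigma$ --- which is precisely the content of the boundary version of Theorem~\ref{thm:normality} that the paper cites instead of reproving. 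Also, your ``main point to check'' paragraph addresses the easy direction (extending from $\Sigma$ to $D\Sigma$), not the direction that actually needs care (descending from $D\Sigma$ back to $\Sigma$).
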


Both results have their counterparts even when $\del M \neq \emptyset$. We refer to the discussion
below at the end of this subsection.

Next we consider the notion of homotopy in this topological Hamiltonian category.
The following notion of hamiltonian homotopy, which we abbreviate as \emph{hameotopy}, of topological
hamiltonian paths is introduced in \cite{oh:simpleD2,oh:local}. The guiding principle
for the choice of this as the definition of homotopy in this topological Hamiltonian category
is to include the Alexander isotopy we define in section \ref{sec:alexander}
as a special case.

\begin{defn}[Hameotopy]\label{defn:hameotopy} Let $\lambda_0, \, \lambda_1 \in
\CP^{ham}(Sympeo(M,\omega),id)$. A hameotopy
$\Lambda:[0,1]^2 \to Sympeo(M,\omega)$ between
$\lambda_0$ and $\lambda_1$ based at the identity is a map such that
\be\label{eq:ham-homotopy}
\Lambda(0,t) = \lambda_0(t), \, \Lambda(1,t) = \lambda_1(t), \,
\ee and
$\Lambda(s,0) \equiv id$ for all $s \in [0,1]$, and which arises as follows:
there is a sequence of smooth maps $\Lambda_j: [0,1]^2 \to Ham(M,\omega)$ that satisfy
\begin{enumerate}
\item $\Lambda_j(s,0) = id$,
\item $\Lambda_j \to \Lambda$ in $C^0$-topology,
\item Any $s$-section $\Lambda_{j,s}:\{s\} \times [0,1] \to Ham(M,\omega)$
converges in hamiltonian topology in the following sense: If we write
$$
\Dev\left(\Lambda_{j,s}\Lambda_{j,0}^{-1}\right)=: H_j(s),
$$
then $H_j(s)$ converges in hamiltonian topology uniformly over $s \in [0,1]$.
We call any such $\Lambda_j$ an \emph{approximating sequence} of $\Lambda$.
\end{enumerate}

When $\lambda_0(1) = \lambda_1(1) = \psi$, a \emph{hameotopy relative to the ends}
is one that satisfies $\Lambda(s,0) = id, \, \Lambda(s,1) = \psi$ for all $s \in [0,1]$ in addition.

We say that $\lambda_0, \, \lambda_1 \in
\CP^{ham}(Sympeo(M,\omega),id)$ are \emph{hameotopic} (resp. relative to the ends), if there exists a hameotopy
(resp. a hameotopy relative to the ends).
\end{defn}
We emphasize that by the requirement (3),
\be\label{eq:Hj0}
H_j(0) \equiv 0
\ee
in this definition.

All the above definitions can be modified to handle the case of open
manifolds, either noncompact or compact with boundary, by considering
$H$'s compactly supported in the interior as done in section 6 \cite{oh:hameo1}.
We recall the definitions of topological Hamiltonian paths and
Hamiltonian homeomorphisms supported in an open subset $U \subset M$
from \cite{oh:hameo1}.

We first define  $\CP^{ham}(Symp_U(M,\omega),id)$ to be
the set of smooth Hamiltonian paths supported in $U$.
The following definition is taken from Definition 6.2 \cite{oh:hameo1}
to which we refer readers for more detailed discussions.
First for any open subset $V \subset U$ with compact closure $\overline V \subset U$,
we can define a completion of $\CP^{ham}(Symp_{\overline V}(M,\omega),id)$
using the same metric given above.

\begin{defn}\label{defn:directlimit} Let $U \subset M$ be an open subset.
Define $\CP^{ham}(Sympeo_U(M,\omega),id)$ to be the union
$$
\CP^{ham}(Sympeo_U(M,\omega),id) := \bigcup_{K\subset U} \CP^{ham}(Sympeo_K(M,\omega),id)
$$
with the direct limit topology, where $K \subset U$ is a compact subset.
We define $Hameo_c(U,\omega)$ to be the image
$$
Hameo_c(U,\omega): = ev_1(\CP^{ham}(Sympeo_U(M,\omega),id)).
$$
\end{defn}

We would like to emphasize that this set is not necessarily the same as
the set of $\lambda \in \CP^{ham}(Sympeo(M,\omega),id)$
with compact $\supp \lambda \subset U$. The same definition can be
applied to general open manifolds or manifolds with boundary.

\subsection{Calabi invariants of topological Hamiltonian paths in $D^2$}

Denote by $\CP^{ham}(Symp(D^2,\del D^2);id)$ the group of Hamiltonian
paths supported in $\operatorname{Int}(D^2)$, i.e.,
$$
\bigcup_{t \in [0,1]}\supp H_t \subset \operatorname{Int}(D^2).
$$
We denote by $\CP^{ham}(Sympeo(D^2,\del D^2),id)$ the $L^{(1,\infty)}$ hamiltonian
completion of $\CP^{ham}(Symp(D^2,\del D^2);id)$.

We recall the extended Calabi homomorphism defined in \cite{oh:hameo2} whose
well-definedness follows from the uniqueness theorem from \cite{buh-sey}.

\begin{defn} Let $\lambda \in \CP^{ham}(Sympeo(D^2,\del D^2),id)$ and $H$ be
its Hamiltonian supported in $\Int D^2$. We define
$$
\overline{\Cal}^{path}(\lambda)=
\overline{\Cal}^{path}_{D^2}(\lambda) := \overline\Cal(H)
$$
where we define $\overline\Cal(H) = \lim_{i \to \infty} \Cal(H_i)$ for
an (and so any) approximating sequence $H_i$ of $H$.
\end{defn}
It is immediate to check that this defines a homomorphism. The main question
to be answered is whether this homomorphism descends to the group
$Hameo(D^2,\del D^2)$. We recall that one crucial ingredient
needed in the proof of well-definedness of this form of the Calabi invariant
defined on $Diff^\Omega(D^2,\del D^2)$ of area-preserving \emph{diffeomorphisms}
is the fact that $Diff^\Omega(D^2,\del D^2)= Ham(D^2,\del D^2)$ and it is
contractible.
In this regard, we would like to prove the following conjecture.

\begin{conj}\label{conj:loop-vanishing} Let $\lambda$ be a
contractible topological hamiltonian loop based at the identity.
Then
$$
\overline{\Cal}^{path}(\lambda) = 0.
$$
\end{conj}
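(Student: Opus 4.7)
The plan is to reduce Conjecture \ref{conj:loop-vanishing} to the Main Conjecture \ref{conj:fvanishing} by embedding $D^2 \hookrightarrow S^2$ as the upper hemisphere $D^2_+$, and then invoking Theorem \ref{thm:fFi=Cal} to convert the vanishing of the basic phase function into the vanishing of the Calabi invariant. Let $\lambda = \phi_F \in \CP^{ham}(Sympeo(D^2,\del D^2),id)$ be a contractible topological Hamiltonian loop with approximating sequence $F_i$ supported in $\Int D^2$, and let $\Lambda$ be a hameotopy based at the identity that contracts $\lambda$ to the constant identity loop. Extend every $F_i$ by zero on $D^2_-$ to obtain a topological Hamiltonian loop on $S^2$ whose support lies in $D^2_+$; similarly, extend each smooth approximating hameotopy $\Lambda_j$ by the identity on $D^2_-$, producing a hameotopy $\widetilde\Lambda$ on $S^2$ whose generating Hamiltonians $H_j(s)$ vanish on $D^2_-$. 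In particular $\phi_{H_j(s)}^t \equiv id$ on $D^2_-$ for every $(s,t) \in [0,1]^2$, which is precisely the hypothesis of Conjecture \ref{conj:fvanishing}.

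Applying Conjecture \ref{conj:fvanishing} to this extended data yields $f_{\underline{\mathbb F}} \equiv 0$ on $S^2$. Theorem \ref{thm:fFi=Cal}, applied with $M = S^2$, $B = D^2_-$ and $U = D^2_+$, then gives
$$
0 \;=\; \lim_{i \to \infty} f_{\underline{\mathbb F_i}}(x) \;=\; \frac{\overline{\Cal}_U(F)}{\vol_\omega(S^2)}
$$
uniformly in $x \in S^2$, so that $\overline{\Cal}_U(F) = 0$. Since $F$ is supported in $U \equiv \Int D^2$, the $U$-supported Calabi invariant on $S^2$ coincides with the path Calabi $\overline{\Cal}^{path}(\lambda)$ on $D^2$ computed from \eqref{eq:Calpath}: both equal $\lim_i \int_0^1 \int_{D^2} F_i\, \Omega\, dt$, the independence of the choice of approximating sequence being guaranteed by the Buhovsky--Seyfaddini uniqueness theorem. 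This closes the argument.

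The principal obstacle is, unsurprisingly, the Main Conjecture \ref{conj:fvanishing} itself, which is the deep Floer-theoretic input driving the whole scheme and which is established in the excerpt only in the weakly graphical case (Theorem \ref{thm:weak-graphical}, whose proof already relies crucially on the Alexander isotopy Theorem \ref{thm:alex}). A secondary technical point that needs care is the engulfedness hypothesis of Theorem \ref{thm:fFi=Cal}, which is not automatic for an arbitrary topological Hamiltonian loop on $D^2_+ \subset S^2$. I would address this by first using the Alexander isotopy of Theorem \ref{thm:alex} to reparametrize the loop into a concatenation of sub-loops each of whose support lies in a topologically small disk, so that the corresponding graph in $S^2 \times S^2$ is confined to a Darboux--Weinstein neighborhood of the diagonal; the homomorphism property of $\overline{\Cal}^{path}$ together with the tracking of \eqref{eq:Calpath} under the Alexander reparametrization then allows one to recover the Calabi value of the original loop from those of the engulfed pieces.
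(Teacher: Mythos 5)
Your proposal tracks the paper's own conditional proof --- Theorem \ref{thm:descent} --- almost exactly: the paper derives the conclusion of Conjecture \ref{conj:loop-vanishing} from Conjecture \ref{conj:signchange} (which is Conjecture \ref{conj:fvanishing}) together with Theorem \ref{thm:disccase}; you invoke Theorem \ref{thm:fFi=Cal} instead, an immaterial substitution here since $\lambda$ is a loop. The identification of $\overline{\Cal}_U(F)$ on $S^2$ with $\overline{\Cal}^{path}(\lambda)$ on $D^2$ and the appeal to Buhovsky--Seyfaddini for well-definedness are both correct, and you rightly emphasize that the unconditional statement remains a conjecture.

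Your ``secondary technical point'' about engulfedness, however, is a misdiagnosis, and the proposed fix would not be the right one even if there were a problem. Engulfedness is \emph{automatic} for any topological Hamiltonian path supported in $D^2_+$: the Darboux--Weinstein chart the paper uses is the full complement $\CU = S^2 \times S^2 \setminus \overline\Delta$ of the anti-diagonal, via the geodesic-flow identification \eqref{eq:PhiD1}, and the lemma in Section \ref{sec:engulfed} shows $(\lambda^+_t \times id)(\Delta) \cap \overline\Delta = \emptyset$ whenever $\supp \lambda^+_t \subset D^2_+$. The graph stays away from $\overline\Delta$ regardless of how large the displacement is, simply because $D^2_+$ contains no antipodal pair; this is exactly why the paper passes to $S^2$ in the first place. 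Your proposed remedy is also confused on its own terms: the Alexander isotopy rescales the loop's support in the $s$-direction, it does not decompose a loop into a concatenation of sub-loops, and ``support in a topologically small disk'' is neither equivalent to, nor the mechanism for, $\Graph \phi$ lying in a Darboux--Weinstein neighborhood of $\Delta$. No such repair is needed.
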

In the next section, we will establish the existence of Alexander isotopy in the
topological Hamiltonian category and prove that any topological hamiltonian loop (based at the identity)
on $D^2$ is indeed contractible and so the contractibility hypothesis in this conjecture automatically holds.

By the homomorphism property of $\overline{\Cal}^{path}$,
an immediate corollary of this conjecture would be the following:
Suppose that Conjecture \ref{conj:loop-vanishing} holds.
Let
$$
\overline{\Cal}^{path}:\CP^{ham}(Sympeo(D^2,\del D^2),id) \to \R
$$
be the above extension of the Calabi homomorphism $\Cal^{path}$ such that
$\lambda_0(1) = \lambda_1(1)$. Then we have
$$
\overline{\Cal}^{path}(\lambda_0) = \overline{\Cal}^{path}(\lambda_1).
$$
In the next section, we will elaborate this point further.

\section{Alexander isotopy of loops in $\CP^{ham}(Sympeo(D^2,\del D^2),id)$}
\label{sec:alexander}

For the description of Alexander isotopy, we need to consider the
conjugate action of rescaling maps of $D^2$
$$
R_a: D^2(1) \to D^2(a) \subset D^2(1)
$$
for $0 < a < 1$ on $Hameo(D^2,\partial D^2)$, where $D^2(a)$ is the disc
of radius $a$ with its center at the origin. We note that
$R_a$ is a conformally symplectic map and so its conjugate
action maps a symplectic map to a symplectic map
whenever it is defined.

Furthermore the right composition by $R_a$ defines a map
$$
\phi \mapsto \phi \circ R_a^{-1} \, ; \, Hameo(D^2(a),\partial D^2(a))  \subset Hameo(D^2,\partial D^2)
\to Homeo(D^2,\partial D^2)
$$
and then the left composition by $R_a$
followed by extension to the identity on $D^2 \setminus D^2(a)$ defines a map
$$
Hameo(D^2,\partial D^2) \to  Hameo(D^2,\partial D^2).
$$
We have the following important formula for the behavior of
Calabi invariants under the Alexander isotopy.

\begin{lem}\label{lem:Callambdaa} Let $\lambda \in \CP^{ham}(Sympeo(D^2,\del D^2),id)$ be a given
continuous Hamiltonian path on $D^2$. Suppose $\supp \lambda \subset D^2(1-\eta)$
for a sufficiently small $\eta > 0$. Consider the one-parameter family of maps $\lambda_a$
defined by
$$
\lambda_a(t,x) =
\begin{cases}a \lambda(t,\frac{x}{a})  \quad &\mbox{for $|x|
\leq a(1-\eta)$} \\
x \quad &\mbox{otherwise}
\end{cases}
$$
for $0 < a \leq 1$. Then $\lambda_a$ is also a topological
Hamiltonian path on $D^2$ and satisfies \be\label{eq:callamlama}
\overline{\Cal}^{path}(\lambda_a) = a^4
\overline{\Cal}^{path}(\lambda). \ee
\end{lem}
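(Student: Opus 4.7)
The plan is to first derive the lemma for smooth Hamiltonian paths by a direct change-of-variables computation, and then pass to the topological Hamiltonian category using the definition of $\overline{\Cal}^{path}$ via approximating sequences.

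First I would work out the Hamiltonian of $\lambda_a$ in the smooth case. Suppose $\lambda = \phi_H$ with $H$ compactly supported in $D^2(1-\eta)$, and let $\lambda_a = \phi_{H^a}$. The rescaling $R_a(y) = ay$ satisfies $R_a^*\Omega = a^2 \Omega$, so conjugation by $R_a$ transforms a Hamiltonian vector field by a factor that must be matched by a rescaling of the generating Hamiltonian. Differentiating the relation $\phi_{H^a}^t(x) = a \, \phi_H^t(x/a)$ in $t$ and comparing with $X_{H^a}$ yields the formula
\be\label{eq:Ha-formula}
H^a(t,x) = \begin{cases} a^2\, H\!\left(t, \dfrac{x}{a}\right) & \text{if } |x| \le a(1-\eta),\\ 0 & \text{otherwise,}\end{cases}
\ee
where the factor $a^2$ reflects the conformal factor $R_a^*\Omega = a^2 \Omega$. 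Plugging this into the path-level Calabi invariant and substituting $y = x/a$ (so that $dq\wedge dp$ transforms by the Jacobian $a^2$) gives
$$
\Cal^{path}(\lambda_a) = \int_0^1\!\!\int_{|x|\le a(1-\eta)} a^2\, H(t, x/a)\, \Omega(x)\, dt = a^2 \cdot a^2 \int_0^1\!\!\int_{D^2} H(t,y)\, \Omega(y)\, dt = a^4 \Cal^{path}(\lambda),
$$
which establishes \eqref{eq:callamlama} in the smooth case.

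Next I would promote this identity to the topological Hamiltonian category. By Definition \ref{topflowdefn}, $\lambda$ admits a smooth approximating sequence $\phi_{H_i} \to \lambda$ with $H_i \to H$ in $L^{(1,\infty)}$, and by the support hypothesis we may arrange $\supp H_i \subset D^2(1-\eta)$ for all $i$. Define $H_i^a$ by \eqref{eq:Ha-formula} applied to $H_i$. The rescaling operation $H \mapsto H^a$ is plainly continuous in $C^0$ on the path level (since $\overline d(\lambda_a, \mu_a) \le \overline d(\lambda,\mu)$) and multiplies the $L^{(1,\infty)}$-norm of the Hamiltonian by $a^2$, because
$$
\operatorname{osc}(H^a_t) = a^2\, \operatorname{osc}(H_t).
$$
Consequently $\{H_i^a\}$ is Cauchy in $L^{(1,\infty)}$ with limit $H^a$, and $\phi_{H_i^a} \to \lambda_a$ uniformly, so $\lambda_a \in \CP^{ham}(Sympeo(D^2,\partial D^2),id)$ and $\{H_i^a\}$ is an approximating sequence. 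By definition of $\overline{\Cal}^{path}$ and the smooth formula just proved,
$$
\overline{\Cal}^{path}(\lambda_a) = \lim_{i \to \infty}\Cal(H_i^a) = \lim_{i \to \infty} a^4 \Cal(H_i) = a^4\, \overline{\Cal}^{path}(\lambda).
$$

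The main delicate point is verifying that the assignment $\lambda \mapsto \lambda_a$ genuinely respects $L^{(1,\infty)}$-approximating sequences; the key is that rescaling by $R_a$ preserves both the $C^0$-convergence of the time-one maps and the Cauchy property in $L^{(1,\infty)}$, with the latter enjoying the uniform rescaling factor $a^2$ in the Hamiltonian norm. Once this continuity is in hand, the lemma is immediate from the smooth calculation combined with the uniqueness theorem of \cite{buh-sey}, which ensures that $\overline{\Cal}^{path}(\lambda_a)$ is independent of the chosen approximating sequence.
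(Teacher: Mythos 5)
Your proposal is correct and takes essentially the same route as the paper: derive the formula $\Dev(\lambda_a)(t,x)=a^2H(t,x/a)$ for the rescaled Hamiltonian, compute the $a^4$ scaling of $\Cal^{path}$ by change of variables in the smooth case, and then pass to the limit over an approximating sequence. Your version is somewhat more explicit than the paper's (which dispatches the approximation step with an "obviously") in that you verify the rescaling preserves $C^0$-convergence and multiplies the $L^{(1,\infty)}$-norm by $a^2$, thereby confirming $\{H_i^a\}$ is an approximating sequence for $\lambda_a$.
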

\begin{proof} A straightforward calculation proves that $\lambda_a$
is generated by the (unique) continuous Hamiltonian, which we denote by $\operatorname{Dev}(\lambda_a)$
following the notation of \cite{oh:hameo1,oh:hameo2}, which is defined by
\be\label{eq:Devlambdaa}
\operatorname{Dev}(\lambda_a)(t,x) = \begin{cases}
a^2 H \left(t, \frac{x}{a}\right)
\quad &\mbox{for $|x| \leq a(1-\eta)$} \\
0 \quad &\mbox{otherwise}
\end{cases}
\ee
where $H = \operatorname{Dev}(\lambda)$ : Obviously the right hand side function
is the hamiltonian-limit of $\operatorname{Dev}(\lambda_{i,a})$ for a sequence $\lambda_i$ of
smooth hamiltonian approximation of $\lambda$ where  $\lambda_{i,a}$ is defined
by the same formula for $\lambda_i$.

From these, we derive the formula
\beastar
\overline{\Cal}^{path}(\lambda_a)
& = & \lim_{i \to \infty} \Cal^{path}(\lambda_{i,a}) = \lim_{i \to \infty} a^4 \Cal^{path}(\lambda_i) \\
& = & a^4 \lim_{i \to \infty} \int_0^1 \int_{D^2}H_i(t,y)\Omega \wedge \, dt \\
& = & a^4 \lim_{i \to \infty} \Cal^{path}(\lambda_i)
=  a^4 \overline{\Cal}^{path}(\lambda).
\eeastar
This proves \eqref{eq:callamlama}.
\end{proof}

We would like to emphasize that the $s$-Hamiltonian $F_{\Lambda}$
of $\Lambda(s,t) = \lambda_s^t$ does not converge in $L^{(1,\infty)}$-topology and so we cannot
define its hamiltonian limit.

Explanation of this relationship is now in order
in the following remark.

\begin{rem}\label{rem:sHamiltonian}
Let $D^{2n} \subset \R^{2n}$ be the
unit ball. Consider a smooth Hamiltonian $H$ with $\supp \phi_H
\subset \operatorname{Int} D^{2n} \subset \R^{2n}$
and its Alexander isotopy
$$
\Lambda(s,t) = \phi_{H^s}^t = \lambda_s(t), \quad \lambda = \phi_H
$$
Denote by $H_\Lambda$ and $K_\Lambda$ the $t$-Hamiltonian and the
$s$-Hamiltonian respectively. Then we have Banyaga's formula $\frac{\del H}{\del s} = \frac{\del K}{\del t}
- \{H,K\}$ which is equivalent to
the formula
\be\label{eq:dtdK} \frac{\del K}{\del t} =
\frac{\del}{\del s}(H \circ \phi_{H^s}^t)\circ (\phi_{H^s}^t)^{-1}.
\ee
(See p.78 \cite{oh:jkms} for its derivation, for example.)
But we compute
$$
H_t\circ \phi_{H^s}^t(x) = s^2 H_t\left(\frac{\phi_{H^s}^t(x)}{s}\right)
= s^2 H\left(t,\frac{\phi_{H^s}^t(x)}{s}\right).
$$
Therefore we derive
\be\label{eq:FLambda}
K(s,t,x) = 2s \int_0^t
H\left(u,\frac{x}{s}\right) \, du + s \int_0^t \left\langle \left(d\overline
H\left(u,\frac{(\phi_{H^s}^u)^{-1}(x)}{s}\right)\right),
(\phi_{H^s}^u)^{-1}(x)\right\rangle\, du.
\ee
For the second summand, we use the identity $\overline H(t,x) = - H(t, \phi_H^t(x))$.
From this expression, we note that $K$ involves differentiating the
Hamiltonian $H_i$ and hence goes out of the $L^{(1,\infty)}$ hamiltonian category.
\end{rem}

Recall that the well-known Alexander isotopy on the disc $D^2$ exists in the homeomorphism category but not
in the differentiable category. We will establish that such an Alexander isotopy
defines contractions of topological Hamiltonian loops to the
identity constant loop in the topological Hamiltonian category.

\begin{thm}\label{thm:alexander} Let $\lambda$ be
a loop in $\CP^{ham}(Sympeo(D^2,\del D^2),id)$.
Define $\Lambda: [0,1]^2 \to Sympeo(D^2,\del D^2)$ by
$$
\Lambda(s,t) = \lambda_s(t).
$$
Then $\Lambda$ is a hameotopy between $\lambda$ and
the constant path $id$.
\end{thm}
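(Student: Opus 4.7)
The plan is to take the Alexander rescaling from Lemma 3.1 and promote it, level by level, to a hameotopy by approximating $\lambda$ by smooth Hamiltonian loops and applying the rescaling to each approximant. Fix $\eta>0$ with $\supp\lambda\subset D^2(1-\eta)$, and let $\lambda_j = \phi_{H_j}$ be an approximating sequence of $\lambda$ with $H_j$ compactly supported in $D^2(1-\eta)$, $H_j\to \overline{\operatorname{Dev}}(\lambda)=:H$ in $L^{(1,\infty)}$, and $\lambda_j\to \lambda$ in $C^0$. Define the smooth two-parameter family
\begin{equation*}
\Lambda_j(s,t) = (\lambda_j)_s(t), \qquad (s,t)\in[0,1]^2,
\end{equation*}
using the Alexander rescaling formula of Lemma 3.1 (with the convention $\Lambda_j(0,t)=id$ extending continuously from $s>0$). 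By \eqref{eq:Devlambdaa}, the $t$-Hamiltonian of the $s$-section $\Lambda_j(s,\cdot)$ is the rescaled Hamiltonian
\begin{equation*}
H_j(s)(t,x) = \begin{cases} s^2 H_j\!\left(t,\tfrac{x}{s}\right) & |x|\leq s(1-\eta)\\ 0 & \text{otherwise.}\end{cases}
\end{equation*}

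The task is to check that $\Lambda_j$ is an approximating sequence for $\Lambda(s,t)=\lambda_s(t)$ in the sense of Definition 2.3. Item (1), $\Lambda_j(s,0)=id$, is immediate since $\lambda_j(0)=id$ forces $(\lambda_j)_s(0)=id$ by the formula. Item (2), uniform $C^0$ convergence $\Lambda_j\to\Lambda$ on $[0,1]^2$, follows from the direct estimate
\begin{equation*}
\overline d\bigl((\lambda_j)_s(t),\lambda_s(t)\bigr) \leq s\cdot \overline d(\lambda_j(t),\lambda(t)) \leq \overline d(\lambda_j(t),\lambda(t)),
\end{equation*}
since both maps are the identity outside $D^2(s(1-\eta))$ and are $s$-rescalings of $\lambda_j(t)$, $\lambda(t)$ inside; the right-hand side is uniform in $(s,t)$ and tends to $0$. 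For $s=0$ both sides equal $id$ by convention.

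For item (3), the uniform-in-$s$ convergence of $H_j(s)$ in hamiltonian topology, a change of variables gives
\begin{equation*}
\int_0^1\operatorname{osc}\bigl(H_j(s)(t,\cdot)-H(s)(t,\cdot)\bigr)\,dt = s^2\int_0^1\operatorname{osc}_{D^2(1-\eta)}\bigl(H_j(t,\cdot)-H(t,\cdot)\bigr)\,dt \leq s^2\,\|H_j-H\|_{(1,\infty)},
\end{equation*}
and a similar $C^0$ estimate on the associated Hamiltonian paths. Both go to zero uniformly in $s\in[0,1]$, so $H_j(s)\to H(s)$ in hamiltonian topology uniformly over $s$, where $H(s)$ is the $L^{(1,\infty)}$ Hamiltonian of $\lambda_s$ defined by the same rescaling formula applied to $H$. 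This identifies the limit path as $\Lambda(s,\cdot)=\lambda_s$, and verifies condition (3). In particular, $\Lambda(0,\cdot)=id$ (the constant identity path) and $\Lambda(1,\cdot)=\lambda$, so $\Lambda$ is a hameotopy from the constant identity loop to $\lambda$, as required.

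The only delicate point is the behavior at $s=0$, where the rescaling formula degenerates; this is handled cleanly by the factor $s^2$ in the Hamiltonian formula and the factor $s$ in the $C^0$ estimate, which make the family continuously extend to $\Lambda(0,\cdot)=id$ with $H(0)\equiv 0$, as required by \eqref{eq:Hj0}. I expect no other serious obstacle; the engulfment/support condition $\supp\lambda\subset D^2(1-\eta)$ is what makes the rescaled paths automatically sit inside $\CP^{ham}(Sympeo(D^2,\del D^2),id)$ for all $s\in[0,1]$ and prevents any boundary interaction, so the construction descends to a well-defined hameotopy without further modification.
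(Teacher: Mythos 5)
Your proposal founders on the smoothness requirement in Definition \ref{defn:hameotopy}: the approximating maps must be \emph{smooth} maps $\Lambda_j:[0,1]^2 \to Ham(M,\omega)$, and the direct Alexander rescaling $\Lambda_j(s,t) = (\lambda_j)_s(t)$ fails this at $s=0$. This is the basic obstruction that makes the Alexander trick a homeomorphism isotopy but not a diffeotopy: even in one dimension, a function of the form $g(s,x) = s f(x/s)$ (for compactly supported smooth $f$, extended by $g(0,\cdot)=0$) is continuous but not $C^1$ at the origin, since $\partial g/\partial s = f(x/s) - (x/s) f'(x/s)$ has ray-dependent limits as $(s,x) \to (0,0)$. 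The same computation applied to $\lambda_j(t,\cdot)$ shows $(s,x)\mapsto (\lambda_j)_s(t,x)$ is not smooth in $s$ at $s=0$ unless $\lambda_j(t)$ is already linear near the origin, so your $\Lambda_j$ is not a smooth map of the closed square. Your closing remark that the $s^2$ factor ``handles cleanly'' the $s=0$ behavior addresses only the limit of the $t$-Hamiltonians $H_j(s)$, not the smoothness of the 2-parameter diffeomorphism family itself, which is what the definition demands; see also Remark \ref{rem:sHamiltonian}, which warns that taking the $s$-derivative of the Alexander isotopy takes you out of the smooth/hamiltonian category.

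The paper's proof deals with exactly this problem by reparametrizing away from $s=0$: it picks $\e_i \searrow 0$, a smooth nondecreasing surjection $\chi_i: [0,1]\to[\e_i,1]$ with $\chi_i\to id$ in the hamiltonian norm, and sets
$$
\Lambda_{i,\e_i}(s,t) = \lambda_{i,\chi_i(s)}(t,\cdot)\circ \lambda_{i,\e_i}^{-1}(t,\cdot),
$$
which is smooth on all of $[0,1]^2$ because the Alexander scale parameter never drops below $\e_i > 0$, and which satisfies $\Lambda_{i,\e_i}(0,t) = id$ because of the composition with $\lambda_{i,\e_i}^{-1}(t,\cdot)$. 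Your $L^{(1,\infty)}$ and $C^0$ estimates are essentially the same ones the paper uses in its verification of condition (3) via the composition formula \eqref{eq:Devlambdai}, so the quantitative content of your argument is right; what is missing is the regularization device that makes the approximating family admissible in the first place. To repair the proof you should adopt the paper's modified Alexander isotopy (or construct an equivalent smoothing near $s=0$) before running your estimates.
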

\begin{proof} We have $\lambda \in \CP^{ham}(Sympeo(D^2,\del D^2),id)$
with $\lambda(0) = \lambda(1)$. Then $\lambda_s$ defines a loop
contained in $\CP^{ham}(Sympeo(D^2,\del D^2),id)$ for each $0 \leq s
\leq 1$.  Let $H_i$ be an approximating sequence of
the topological Hamiltonian loop $\lambda$.

We fix a sequence $\e_i \searrow 0$ and define a 2-parameter Hamiltonian family $\Lambda_{i,\e_i}$ defined by
\be\label{eq:Lambdaiei}
\Lambda_{i,\e_i}(s,t): = \lambda_{i,\chi_i(s)}(t, \cdot)\circ \lambda_{i,\e_i}^{-1}(t,\cdot)
\ee
where $\chi_i:[0,1] \to [\e_i,1]$ is a monotonically increasing surjective function
with $\chi_i(t) = \e_i$ near $t = 0$, $\chi_i(1) = 1$ near $t =1$, and $\chi_i \to id_{[0,1]}$ in the
hamiltonian norm (see Definition 3.19 and Lemma 3.20 \cite{oh:hameo1} for this fact).
It follows that the sequence $\Lambda_{i,\e_i}$ is smooth and uniformly converges in
hamiltonian topology as $i \to \infty$ over $s \in [0,1]$ and
$\Lambda_{i,\e_i} ^t(1) \to \lambda(t)$ since the Alexander
isotopy is smooth as long as $s > 0$ and by definition $\Lambda_{i,\e_i}$
involves the Alexander isotopy for $s \geq \e_i > 0$. The convergence immediately follows from the
explicit expression of $\lambda_a$ in Lemma \ref{lem:Callambdaa}.

Finally we need to check
\be\label{eq:DevLambdai}
\|\Dev(\Lambda_{i,\e_i}(s,\cdot)) - \Dev(\Lambda_{j,\e_j}(s,\cdot)\| \to 0
\ee
uniformly over $s \in [0,1]$ as $i, \, j \to \infty$. For this, we apply the standard formula
of $\Dev$ for the composed flow,
$$
\Dev(\lambda \mu^{-1})(t,x) = \Dev(\lambda)(t,x) - \Dev(\mu)(t,\mu_t \lambda_t^{-1}(x))
$$
to $\Lambda_{i,\e_i}: = \lambda_{i,\chi_i(s}(t, \cdot)\circ \lambda_{i,\e_i}^{-1}(t,\cdot)$, which
amounts to the more familiar formula $(H\#\overline G)_t = H_t - G_t\circ \phi_G^t(\phi_H^t)^{-1}$
in the literature. Then we get
\be\label{eq:Devlambdai}
\Dev(\Lambda_{i,\e_i}(s,\cdot))(t,x) = \Dev(\lambda_{i,\chi_i(s)})(t,x)
- \Dev(\lambda_{i,\e_i})(t, \lambda_{i,\e_i}^t\circ (\lambda_{i,\chi_i(s)}^t)^{-1}(x))
\ee
where
$$
\Dev(\lambda_{i,\chi_i(s)})(t,x)
= \begin{cases} \chi_i(s)^2 H_i (t, \frac{x}{\chi_i(s)}) \quad & \mbox{for } \, |x| \leq \chi_i(s) (1-\eta) \\
0 \quad &\mbox{otherwise}
\end{cases}
$$
and
$$
\Dev(\lambda_{i,\e_i})(t,x)
= \begin{cases} \e_i^2 H_i (t, \frac{x}{\e_i}) \quad & \mbox{for } \, |x| \leq \e_i (1-\eta) \\
0 \quad &\mbox{otherwise}.
\end{cases}
$$
From these expressions, \eqref{eq:DevLambdai} immediately follows. This finishes the proof.
\end{proof}

\begin{cor}\label{cor:alex} If $\lambda_0, \, \lambda_1 \in \CP^{ham}(Sympeo(D^2, \del D^2),id)$
and $\lambda_0(1) = \lambda_1(1)$, then they are hameotopic relative
to the end.
\end{cor}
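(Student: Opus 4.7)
The strategy is to reduce Corollary \ref{cor:alex} to Theorem \ref{thm:alexander} via a group-theoretic trick: we convert the hameotopy problem between $\lambda_0$ and $\lambda_1$ into the problem of contracting the quotient loop $\nu := \lambda_0\lambda_1^{-1}$ to the identity. Concretely, set $\nu(t) := \lambda_0(t)\lambda_1(t)^{-1}$. By the continuity of the group operations on $\CP^{ham}(Sympeo(D^2,\del D^2),id)$ (established in \cite{oh:hameo1}), $\nu$ lies in this space, and the hypothesis $\lambda_0(1) = \lambda_1(1) = \psi$ forces $\nu(1) = id$, so $\nu$ is a topological Hamiltonian loop at the identity with the factorization $\lambda_0(t) = \nu(t)\lambda_1(t)$.

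Applying Theorem \ref{thm:alexander} to $\nu$ yields a hameotopy $N:[0,1]^2 \to Sympeo(D^2,\del D^2)$ with $N(0,t) = id$, $N(1,t) = \nu(t)$, and $N(s,0) = N(s,1) = id$ for all $s$ (the last identity uses $\nu(1) = id$ together with the explicit rescaling formula $N(s,t) = \nu_s(t)$ of Lemma \ref{lem:Callambdaa}). Define
$$
\widetilde\Lambda(s,t) := N(s,t)\,\lambda_1(t).
$$
Direct substitution gives $\widetilde\Lambda(0,t) = \lambda_1(t)$, $\widetilde\Lambda(1,t) = \lambda_0(t)$, $\widetilde\Lambda(s,0) = id$, and $\widetilde\Lambda(s,1) = \psi$ for all $s$; so after the harmless reparametrization $s \mapsto 1 - s$, this is the desired hameotopy between $\lambda_0$ and $\lambda_1$ relative to the ends.

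It remains only to verify that $\widetilde\Lambda$ satisfies the technical requirements of Definition \ref{defn:hameotopy}. Pick smooth approximations $\lambda_{0,j}, \lambda_{1,j}$ of $\lambda_0, \lambda_1$ in hamiltonian topology, so that $\nu_j := \lambda_{0,j}\lambda_{1,j}^{-1}$ is a smooth hamiltonian approximation of $\nu$. Feeding $\nu_j$ into the construction from the proof of Theorem \ref{thm:alexander} produces an approximating sequence $N_j$ for $N$ with $N_j(0,t) = id$ for every $j$, and we set $\widetilde\Lambda_j(s,t) := N_j(s,t)\lambda_{1,j}(t)$. The $C^0$ convergence $\widetilde\Lambda_j \to \widetilde\Lambda$ is immediate from the continuity of composition. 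Crucially, since $\widetilde\Lambda_{j,0}(t) = \lambda_{1,j}(t)$, we have the telescoping cancellation
$$
\widetilde\Lambda_{j,s}(t)\,\widetilde\Lambda_{j,0}(t)^{-1} = N_j(s,t)\lambda_{1,j}(t)\lambda_{1,j}(t)^{-1} = N_j(s,t),
$$
which reduces the required uniform-in-$s$ hamiltonian convergence of $\Dev(\widetilde\Lambda_{j,s}\widetilde\Lambda_{j,0}^{-1})$ to precisely property (3) of $N_j$ already furnished by Theorem \ref{thm:alexander}. The argument presents no genuine obstacle; the only design choice worth remarking on is that one must multiply $N$ by $\lambda_1$ on the \emph{right} (rather than on the left by $\lambda_1^{-1}$), so that the factor $\lambda_{1,j}$ cancels in the hameotopy-condition (3) without introducing any explicit conjugation-type correction.
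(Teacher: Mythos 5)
Your proof is correct and follows essentially the same route as the paper: form the quotient loop $\nu = \lambda_0\lambda_1^{-1}$ based at the identity, contract it by the Alexander hameotopy of Theorem \ref{thm:alexander}, and then multiply by $\lambda_1$ to transfer the contraction to a hameotopy between $\lambda_0$ and $\lambda_1$ relative to the ends. The paper states the final transfer step without detail, whereas you spell it out via $\widetilde\Lambda(s,t) = N(s,t)\lambda_1(t)$ and observe that the telescoping cancellation $\widetilde\Lambda_{j,s}\widetilde\Lambda_{j,0}^{-1} = N_j(s,\cdot)$ reduces condition (3) of Definition \ref{defn:hameotopy} to the one already verified for $N$; this is a faithful and somewhat more explicit rendering of the same argument.
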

\begin{proof} Theorem \ref{thm:alexander} implies that
the standard Alexander isotopy given in Lemma \ref{lem:Callambdaa}
is a hameotopy contracting
any topological Hamiltonian loop to the
identity in $\CP^{ham}(Sympeo(D^2,\del D^2),id)$ with ends points fixed.  This proves that the product loop
$\lambda_0\lambda_1^{-1}$, which is based at the identity, is contractible
via a hameotopy relative
to the ends. Then this implies that
$\lambda_0$ and $\lambda_1$ are hameotopic to each other relative to the ends.
\end{proof}

An immediate consequence of Corollary \ref{cor:alex}
is the following

\begin{prop} Suppose Conjecture \ref{conj:loop-vanishing} holds. Then
we have
$$
\overline{\Cal}^{path}(\lambda_0) = \overline{\Cal}^{path}(\lambda_1)
$$
if $\lambda_0, \, \lambda_1 \in \CP^{ham}(Sympeo(D^2, \del D^2),id)$
and $\lambda_0(1) = \lambda_1(1)$,.
\end{prop}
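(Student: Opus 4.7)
The plan is to reduce the desired equality to Conjecture~\ref{conj:loop-vanishing} applied to the ratio loop $\mu := \lambda_0 \lambda_1^{-1}$. Since $\lambda_0(1) = \lambda_1(1)$ and $\lambda_0(0) = \lambda_1(0) = id$, the composition $\mu$ is a path in $\CP^{ham}(Sympeo(D^2,\del D^2),id)$ with $\mu(0) = \mu(1) = id$, i.e., a topological Hamiltonian loop based at the identity. The homomorphism property of $\overline{\Cal}^{path}$ recorded just after its definition in this section gives
$$
\overline{\Cal}^{path}(\mu) \;=\; \overline{\Cal}^{path}(\lambda_0) \,-\, \overline{\Cal}^{path}(\lambda_1),
$$
so the claimed equality is equivalent to the single assertion $\overline{\Cal}^{path}(\mu) = 0$.

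Next I would invoke Theorem~\ref{thm:alexander} (Alexander isotopy in the topological Hamiltonian category) to assert that $\mu$ is hameotopic to the constant identity loop via the rescaling family $\mu_s$ of Lemma~\ref{lem:Callambdaa}. Thus $\mu$ is a contractible topological Hamiltonian loop in the precise sense required by Conjecture~\ref{conj:loop-vanishing}. Applying that conjecture yields $\overline{\Cal}^{path}(\mu) = 0$, and combining with the homomorphism identity above completes the argument.

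The only small point I would verify carefully is that the product path $\lambda_0 \lambda_1^{-1}$ indeed lies in $\CP^{ham}(Sympeo(D^2,\del D^2),id)$ and hence falls within the domain of the Alexander contraction of Theorem~\ref{thm:alexander}. This is immediate from the direct-limit definition of the path space (Definition~\ref{defn:directlimit}): if $\lambda_j$ is supported in compact $K_j \subset \Int D^2$ for $j = 0, 1$, then $\lambda_0 \lambda_1^{-1}$ is supported in $K_0 \cup K_1$, again a compact subset of $\Int D^2$, and the $L^{(1,\infty)}$ Hamiltonian generating it is obtained as a limit of the smooth concatenation formula $(H\#\overline G)_t = H_t - G_t \circ \phi_G^t (\phi_H^t)^{-1}$ applied to approximating sequences. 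No substantive obstacle arises in this proposition: all the deep content has already been absorbed into Theorem~\ref{thm:alexander} (existence of the Alexander hameotopy) and into the hypothesized Conjecture~\ref{conj:loop-vanishing}, and the present proposition is a purely formal consequence of those two facts together with additivity of $\overline{\Cal}^{path}$.
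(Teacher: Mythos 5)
Your argument is correct and follows essentially the same route as the paper: form the ratio loop $\lambda_0\lambda_1^{-1}$, use Theorem~\ref{thm:alexander} (or equivalently Corollary~\ref{cor:alex}) to see it is contractible, invoke Conjecture~\ref{conj:loop-vanishing} to get $\overline{\Cal}^{path}(\lambda_0\lambda_1^{-1}) = 0$, and finish by the homomorphism property of $\overline{\Cal}^{path}$. The paper deduces this as an ``immediate consequence of Corollary~\ref{cor:alex},'' which packages precisely these steps.
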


This theorem  implies that $\overline{\Cal}^{path}$
restricted to $\CP^{ham}(Sympeo(D^2,\del D^2),id)$
depends only on the final point and so gives rise to the following main theorem on the extension of
Calabi homomorphism.

\begin{thm} Suppose Conjecture \ref{conj:loop-vanishing} holds. Define a map
$
\overline{\operatorname{Cal}}: Hameo(D^2,\del D^2) \to \R
$
by
$$
\overline{\operatorname{Cal}}(g) := \overline{\Cal}^{path}(\lambda)
$$
for a (and so any) $\lambda \in \CP^{ham}(Sympeo(D^2,\del D^2),id)$ with
$g = \lambda(1)$.
Then this is well-defined and extends the Calabi homomorphism $\Cal:Diff^\Omega(D^2,\del D^2) \to \R$ to
$$
\overline{\operatorname{Cal}}: Hameo(D^2, \del D^2) \to \R.
$$
\end{thm}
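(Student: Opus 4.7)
The plan is to assemble the theorem from the Proposition immediately preceding it, which encodes all the hard content conditional on Conjecture~\ref{conj:loop-vanishing}, together with the standard homomorphism and integration properties of $\overline{\Cal}^{path}$ developed in Section~\ref{sec:CalabiD2}. Three things need checking: well-definedness of $\overline{\Cal}$ on $Hameo(D^2,\del D^2)$; its being a group homomorphism; and the fact that it restricts to the classical Calabi invariant on the subgroup $Ham(D^2,\del D^2) = Diff^\Omega(D^2,\del D^2)$.

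First I would address well-definedness. Given $g \in Hameo(D^2,\del D^2)$, suppose $\lambda_0, \lambda_1 \in \CP^{ham}(Sympeo(D^2,\del D^2),id)$ both satisfy $\lambda_0(1) = \lambda_1(1) = g$. The preceding Proposition asserts $\overline{\Cal}^{path}(\lambda_0) = \overline{\Cal}^{path}(\lambda_1)$; this is the one step where Conjecture~\ref{conj:loop-vanishing} is used, combined with Theorem~\ref{thm:alexander}, which guarantees that any topological Hamiltonian loop on $D^2$ based at the identity, in particular $\lambda_0 \cdot \lambda_1^{-1}$, is contractible via a hameotopy. Hence the assignment $g \mapsto \overline{\Cal}^{path}(\lambda)$ is independent of the chosen path and defines a well-defined map $\overline{\Cal}: Hameo(D^2,\del D^2) \to \R$.

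Next I would verify the homomorphism property. Given $g_1, g_2 \in Hameo(D^2,\del D^2)$ with representative paths $\lambda_1, \lambda_2$, the pointwise product $(\lambda_1 \cdot \lambda_2)(t) := \lambda_1(t) \circ \lambda_2(t)$ is again a topological Hamiltonian path, now ending at $g_1 g_2$. At the smooth approximation level this reduces to the standard additivity $\Cal^{path}(\phi_H \cdot \phi_K) = \Cal^{path}(\phi_H) + \Cal^{path}(\phi_K)$, a direct consequence of the area-preservation of $\phi_H^t$ applied to the Hofer composition Hamiltonian $H \# K$; this identity passes to the $L^{(1,\infty)}$ limit by continuity of $\overline{\Cal}^{path}$. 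Combined with well-definedness, this yields $\overline{\Cal}(g_1 g_2) = \overline{\Cal}(g_1) + \overline{\Cal}(g_2)$. For the extension claim, any $g \in Ham(D^2,\del D^2)$ is reached by a smooth Hamiltonian path $\phi_H$ which is its own approximating sequence, so $\overline{\Cal}(g) = \overline{\Cal}^{path}(\phi_H) = \int_0^1 \int_{D^2} H(t,x)\,\Omega\,dt$; by the equivalence of Banyaga's definition with Definition~\ref{defn:Cal-defn} recalled after Conjecture~\ref{conj:Calabiextend}, this coincides with the classical $\Cal(g)$.

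The step I expect to be the main obstacle is not internal to this proof at all: it is the hypothesis itself, Conjecture~\ref{conj:loop-vanishing}. Once that conjecture is granted, the present theorem is essentially formal, and the substantive work of the paper instead consists in reducing Conjecture~\ref{conj:loop-vanishing} to the vanishing of the basic phase function $f_{\underline{\mathbb F}}$ (Conjecture~\ref{conj:fvanishing}) via Theorem~\ref{thm:fFi=Cal}, and establishing this vanishing, at least in the weakly graphical case, via Theorem~\ref{thm:weak-graphical}. The value of the proof sketched above is precisely that it isolates one clean analytic input, the vanishing on contractible topological Hamiltonian loops, from which the extendability of $\Cal$ on the two-disc follows automatically.
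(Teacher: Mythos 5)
Your proof is correct and follows essentially the same route as the paper: the paper states this theorem as an immediate consequence of the preceding Proposition (itself derived from Corollary~\ref{cor:alex} together with Conjecture~\ref{conj:loop-vanishing}), which delivers precisely the path-independence you use for well-definedness. Your additional verification of the homomorphism property and of the restriction to the smooth Calabi invariant is a correct and natural elaboration of the word ``extends'' that the paper leaves implicit.
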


Once this theorem is established, nonsimpleness of $Hameo(D^2, \del D^2)$
immediately follows from Conjecture \ref{conj:loop-vanishing}.
(See \cite{oh:hameo2} for the needed argument.)

\section{Reduction to the engulfed case and its Lagrangianization}
\label{sec:engulfed}

In this section, we reduce the proof of Conjecture \ref{conj:loop-vanishing} to the engulfed
topological Hamiltonian loops on $S^2$.
Using the given identification of $D^2$ as the upper hemi-sphere denoted by $D^2_+$, we can embed
$$
\iota^+: \CP^{ham}(Symp(D^2,\del D^2);id) \hookrightarrow
\CP^{ham}(Symp(S^2);id)
$$
by extending any element $\phi_H \in \CP^{ham}(Symp(D^2,\del D^2);id)$
to the one that is identity on the lower hemisphere  $D^2_-$ by setting $H \equiv 0$
thereon.

We first recall the definition of engulfed Hamiltonians from \cite{oh:local}.

\begin{defn} Let $(M,\omega)$ be a symplectic manifold.
Let a Darboux-Weinstein chart
$$
\Phi: \CV \subset T^*\Delta \to U_\Delta \subset (M\times M, \omega \oplus -\omega)
$$
be given. We call $\CU$ a Darboux-Weinstein neighborhood of the diagonal with respect to $\Phi$.
In general we call a neighborhood $U_\Delta$ of the diagonal a \emph{Darboux-Weinstein neighborhood}
if it is the image of a Darboux-Weinstein chart.
\end{defn}

With this preparation, we are ready to recall the following definition from \cite{oh:local}.

\begin{defn}\label{defn:engulfed}
\begin{enumerate}
\item An isotopy of Lagrangian submanifold $\{L_t\}_{0 \leq s \leq 1}$
of $L$ is called \emph{$V$-engulfed} if there exists a Darboux
neighborhood $V$ of $L$ such that $L_s \subset V$ for all $s$.
When we do not specify $V$, we just call the isotopy engulfed.
\item  We call a (topological) Hamiltonian path $\phi_H$ $\CU$-engulfed if its graph
$\Graph \phi_H^t$ is engulfed in a Darboux-Weinstein neighborhood $\CU$ of the
diagonal $\Delta$ of $(M \times M, \omega \oplus -\omega)$.
\end{enumerate}
\end{defn}

Now let $\lambda = \phi_F$ be a contractible topological
Hamiltonian loop contained in $\CP^{ham}(Sympeo(D^2,\del D^2),id)$
and $\Lambda = \{\lambda(s)\}_{s \in [0,1]}$ a given hameotopy contracting the loop.

Let $\lambda \in \CP^{ham}(Sympeo(D^2,\del D^2),id)$ and
consider its extension $\iota^+(\lambda)$ as an element in $\CP^{ham}(Sympeo_{D^+}(S^2),id)$
obtained via the embedding $\iota^+$. Denote by $D^1(T^*S^2)$ the unit cotangent bundle and
by $\overline \Delta$ the anti-diagonal
$$
\overline \Delta = \{(x,\overline x) \in S^2 \times S^2 \mid x \in S^2 \}.
$$
Then it is well-known that the geodesic flow of the standard metric on
$S^2$ induces a symplectic diffeomorphism
\be\label{eq:PhiD1}
\Phi: D^1(T^*S^2) \to S^2 \times S^2 \setminus \overline \Delta
\ee
where $\overline x$ is the involution along a (fixed) equator. We regard
the image $\CU = S^2 \times S^2 \setminus \overline \Delta$ as a Darboux-Weinstein
neighborhood of the diagonal $\Delta \subset S^2 \times S^2$.

It is then easy to see the following

\begin{lem}
Let $\lambda \in \CP^{ham}(Sympeo(D^2,\del D^2),id)$ and
denote by $\lambda^+ = \iota^+(\lambda) \in \CP^{ham}(Sympeo_{D^+}(S^2),id)$
constructed as above. Then
$$
(\lambda^+_t \times id) (\Delta) \cap \overline \Delta = \emptyset.
$$
In particular, the path $\lambda^+$ is $\CU$-engulfed.
\end{lem}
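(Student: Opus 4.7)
The plan is a direct case analysis on the location of the point $x \in S^2$. First I will record the structural properties of $\lambda^+_t$ that follow from its construction via $\iota^+$: since the Hamiltonian of $\lambda$ is (topologically) supported in the interior of $D^2 \cong D^2_+$, each smooth approximating flow $\phi^+_{H_i,t}$ is the identity on the closed lower hemisphere $D^2_-$ and restricts to a homeomorphism of $D^2_+$ fixing the equator $\del D^2_+$ pointwise. Because these properties are preserved under $C^0$-limits, they hold for $\lambda^+_t$ itself, uniformly in $t \in [0,1]$.

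Next I will use the geometric input that, under the identification \eqref{eq:PhiD1}, the anti-diagonal is precisely $\overline\Delta = \{(x, \overline x) : x \in S^2\}$ with $\overline{\,\cdot\,}$ a fixed-point-free involution that sends $\Int D^2_+$ onto $\Int D^2_-$ and swaps the equator to itself without fixed points. This is the standard description of the cut-locus correspondence on the round $S^2$: a unit covector of maximal length at $x$ generates a geodesic reaching its cut point $\overline x$, and the choice of equator referred to in \eqref{eq:PhiD1} fixes the normalization of this involution.

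A hypothetical element of $(\lambda^+_t \times id)(\Delta) \cap \overline \Delta$ corresponds to some $x \in S^2$ with $\lambda^+_t(x) = \overline x$. If $x \in \Int D^2_+$, then $\lambda^+_t(x) \in \Int D^2_+$ while $\overline x \in \Int D^2_-$, and the equation fails; if instead $x \in D^2_-$ (equator included), then $\lambda^+_t(x) = x$, which would force $x = \overline x$, contradicting the fixed-point freeness of $\overline{\,\cdot\,}$. In either case the equation $\lambda^+_t(x) = \overline x$ is impossible, so the intersection is empty for every $t \in [0,1]$. The $\CU$-engulfedness of $\lambda^+$ follows immediately from Definition \ref{defn:engulfed}, because by construction $\CU = S^2 \times S^2 \setminus \overline\Delta$, and we have just verified $\Graph \lambda^+_t \subset \CU$ for all $t$.

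I do not foresee a real obstacle here. The one small point meriting care is the claim that the hemisphere-preserving properties of $\lambda^+_t$ survive the $C^0$-limit uniformly in $t$; but this is automatic from Definition \ref{defn:directlimit}, which forces every member of the approximating sequence to be the identity on $D^2_-$ and to fix the equator, so the limit shares both properties. Beyond that, the argument is purely set-theoretic given the geometric identification of $\overline\Delta$.
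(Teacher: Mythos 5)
Your proof is correct, and since the paper states this lemma with ``It is then easy to see'' and supplies no argument, your case analysis is exactly the intended one. The key facts you use --- that $\lambda^+_t$ restricts to the identity on $D^2_-$ and to a self-homeomorphism of $\Int D^2_+$ (both inherited from the approximating sequence in $\CP^{ham}(Sympeo_{D^2_+}(S^2),id)$ via Definition \ref{defn:directlimit}), and that the antipodal involution $\overline{\cdot}$ is fixed-point-free and interchanges the open hemispheres --- together rule out any solution to $\lambda^+_t(x)=\overline x$, so $\Graph \lambda^+_t=(\lambda^+_t\times id)(\Delta)\subset S^2\times S^2\setminus\overline\Delta=\CU$ for all $t$.
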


Motivated by the above discussion,
we will always consider only the engulfed case in the rest of the paper,
unless otherwise said.

Now let $F: [0,1] \times M \to \R$ be a mean normalized engulfed Hamiltonian
on a closed symplectic manifold $(M,\omega)$. The manifold $M$ carries a natural Liouville measure
induced by $\omega^n$. Consider the diagonal Lagrangian $\Delta \subset (M \times M,
\omega \oplus -\omega)$ identified with the zero section
$o_\Delta \subset T^*\Delta$ in a Darboux chart $(V_\Delta, -d\Theta)$ of
$\Delta$ in $M \times M$. Put a density $\rho_\Delta$ on $\Delta \subset M \times M$ induced
by $\omega^n$ by the diffeomorphism of the second projection $\pi_2:\Delta \to M$.

We fix Darboux neighborhoods
$$
V_\Delta \subset \overline V_\Delta \subset U_\Delta
$$
and let $\omega\oplus -\omega = -d\Theta$ on $U_\Delta$ regarded as
a neighborhood of the zero section of $T^*\Delta$ once and for all.
Then
$$
\Graph \phi_F^t \subset V_\Delta \quad \mbox{for all $t \in [0,1]$}.
$$
Here we define
$$
\Graph \phi_F^t : = \{(\phi_F^t(y),y) \mid y \in M\}.
$$

We consider the Hamiltonian $\pi_1^*F$, i.e., the one defined by
$$
\pi_1^*F(t,(x,y)) = F(t,x)
$$
on $T^*\Delta$. This itself is not supported in $U_\Delta$ but we can multiply a cut-off
function $\chi$ of $U_\Delta$ so that
$$
\chi \equiv 1 \quad \mbox{on } \, V_\Delta, \quad \supp \chi \subset U_\Delta
$$
and consider the function $\mathbb F$ defined by
$$
\mathbb F(t,(x,y)) = \chi(x,y) \pi_1^*F(t,(x,y))= \chi(x,y) F(t,x)
$$
so that
the associated Hamiltonian deformations of $\psi^t(o_N)$ are unchanged.
We note that \emph{$\mathbb F$ is compactly supported in $T^*\Delta$.}
and automatically satisfies the normalization condition
\be\label{eq:normalized}
\int_\Delta \mathbb F(t, \phi_\mathbb F^t(q))\, \rho_\Delta = 0
\ee
for all $t \in [0,1]$ where $\rho_\Delta$ is the measure on $\Delta$ induced by the Liouville measure
on $M$ under the projection $\pi_2:\Delta \subset M \times M \to M$.

Now we denote by $f_\mathbb F$ the basic phase function of $\Graph \phi_F^1 = \phi_{\mathbb F}^1(o_\Delta)$.
In the next section, we will examine the relationship between this function
and the Calabi invariant of $F$.

\section{Basic phase function $f_H$ and its axioms}
\label{sec:basicphase}

In this section, we first recall the definition of \emph{basic phase
function} constructed in \cite{oh:jdg} and summarize its axiomatic properties.
Following the terminology of \cite{PPS}, we first introduce the following definition.

\begin{defn}
Let $L \subset T^*N$ be a Hamiltonian deformation of the zero section $o_N$. We call
any continuous function $f: N \to \R$ a \emph{graph selector} such that
$$
(q,df(q)) \in L
$$
where $df(q)$ exists.
\end{defn}

Existence of such a single-valued continuous function was proved by
Sikorav, Chaperon \cite{chaperon} by the generating function method
and by the author \cite{oh:jdg} using the Lagrangian Floer theory.
Lipschitz continuity of this particular graph selector follows from
the continuity result established in section 6 \cite{oh:jdg} specialized to
the submanifold $S$ to be a point. The detail of another proof of this Lipschitz continuity
is also given in \cite{PPS} using the generating function
techniques.

We denote by $\operatorname{Sing} f$
the set of non-differentiable points of $f$. Then by definition
$$
N_0=\operatorname{Reg} f: = N \setminus \operatorname{Sing} f
$$
is a subset of full measure and $f$ is differentiable thereon. In fact, for a generic
choice of $L = \phi_H^1(o_N)$, $N_0$ is open and dense and $\operatorname{Sing} f$
is a stratified submanifold of $N$ of codimension at lease 1. (See \cite{oh:lag-spectral} for
its proof.)

By definition,
\be\label{eq:dfL} |df(q)| \leq \max_{x \in L} |p(x)|
\ee
for any $q \in N_0$, where $x = (q(x),p(x))$ and the norm
$|p(x)|$ is measured by any given Riemannian metric on $N$.

The following is an immediate corollary of the definition.
We denote by $d_{\text{\rm H}}$ the Hausdorff distance.

\begin{cor}\label{cor:|df|to0}
As $d_{\text{\rm H}}(\phi_H^1(o_N),o_N) \to 0$,
$|df(q)| \to 0$ uniformly over $q \in N_0$.
\end{cor}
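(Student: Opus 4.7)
The plan is to read off the corollary directly from the a priori pointwise bound \eqref{eq:dfL}, by showing that $\max_{x\in L}|p(x)|$ is controlled by the Hausdorff distance $d_{\text{\rm H}}(L,o_N)$. So the argument reduces to a straightforward metric comparison on $T^*N$.

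First I would fix a Riemannian metric $g$ on $N$ and equip $T^*N$ with the corresponding Sasaki metric $\widetilde g$, so that the restriction of $\widetilde g$ to each cotangent fiber $T_q^*N$ agrees with the fiber norm $|\cdot|$ used in \eqref{eq:dfL}, and so that the zero section $o_N$ is a totally geodesic submanifold whose normal bundle at $(q,0)$ is the fiber $T_q^*N$. With this metric, for any point $x=(q(x),p(x))\in T^*N$ one has the elementary estimate
\be\label{eq:distp}
\dist_{\widetilde g}(x,o_N)\;\geq\; |p(x)|,
\ee
since projection to the base followed by the zero inclusion gives a path of length $|p(x)|$ to $(q(x),0)\in o_N$, and by the Sasaki structure no shorter path from $x$ to $o_N$ decreases the fiber component. (If one is uncomfortable with \eqref{eq:distp} globally, it suffices to establish it for all $x$ lying in a fixed compact Darboux neighborhood of $o_N$, which is harmless since the hypothesis $d_{\text{\rm H}}(L,o_N)\to 0$ forces $L$ eventually into any such neighborhood.)

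Next, from the definition of Hausdorff distance,
\be\label{eq:sup-hausdorff}
\sup_{x\in L}\dist_{\widetilde g}(x,o_N)\;\leq\; d_{\text{\rm H}}(L,o_N).
\ee
Combining \eqref{eq:distp} and \eqref{eq:sup-hausdorff} yields
$$
\max_{x\in L}|p(x)|\;\leq\; d_{\text{\rm H}}(L,o_N).
$$
Plugging this into the graph selector inequality \eqref{eq:dfL} gives, for every $q\in N_0$,
$$
|df(q)|\;\leq\; d_{\text{\rm H}}(\phi_H^1(o_N),o_N),
$$
which is the desired uniform bound and hence the corollary.

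The only real point to check is that the inclusion $(q,p)\mapsto|p|\leq\dist_{\widetilde g}((q,p),o_N)$ holds, and that is the only (mild) obstacle: globally on $T^*N$ the Sasaki distance to $o_N$ can depart from the fiber norm through base-direction geodesics in curved settings, but as noted above one only needs the inequality in a fixed compact tubular neighborhood of $o_N$, which is automatic for $L$ close enough to $o_N$ in $d_{\text{\rm H}}$. No Floer-theoretic input beyond the already-established inequality \eqref{eq:dfL} is required.
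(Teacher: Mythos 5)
Your proof is correct and takes essentially the same route the paper intends: the paper labels the corollary ``an immediate corollary of the definition,'' meaning exactly that one combines \eqref{eq:dfL} with the observation that Hausdorff-closeness of $L=\phi_H^1(o_N)$ to $o_N$ forces $\max_{x\in L}|p(x)|$ to be small, which is what you establish via the (correct, if slightly awkwardly phrased) fact that the fiber-norm function $|p|$ is $1$-Lipschitz in a natural metric on $T^*N$. The only minor presentational quibble is that the first clause of your justification of \eqref{eq:distp} produces the easy inequality $\dist_{\widetilde g}(x,o_N)\le|p(x)|$ rather than the one you need; the real content is the Lipschitz bound in the second clause, which you gesture at and which does hold because horizontal directions in the Sasaki metric preserve $|p|$ under parallel transport.
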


However this result itself does not tell us much about the convergence of
the values of the function $f$ itself because a priori the value of $f$ might not be
bounded for a sequence $H_i$ such that  $d_{\text{\rm H}}(\phi_H^1(o_N),o_N) \to 0$.

In \cite{oh:jdg}, a canonical choice of $f$ is constructed via the
chain level Floer theory, \emph{provided} the generating Hamiltonian
$H$ of $L=\phi_H^1(o_N)$ is given. The author called the corresponding graph selector $f$
the \emph{basic phase function} of $L = \phi_H^1(o_N)$ and denoted it by $f_H$.
We give a quick outline of the construction referring the
readers to \cite{oh:jdg} for the full details of the construction.

Consider the Lagrangian pair
$$
(o_N, T^*_qN), \quad q \in N
$$
and its associated Floer complex $CF(H;o_N, T^*_qN)$ generated by
the Hamiltonian trajectory $z:[0,1] \to T^*N$ satisfying
\be\label{eq:Hamchordeq2}
\dot z = X_H(t,z(t)), \quad z(0) \in o_N, \, z(1) \in T^*_qN.
\ee
Denote by $\CC hord(H;o_N,T^*_qN)$ the set of
solutions. The differential $\del_{(H,J)}$ on $CF(H;o_N, T^*_qN)$ is
provided by the moduli space of solutions of the perturbed
Cauchy-Riemann equation
\be\label{eq:CRHJqN}
\begin{cases}
\dudtau + J\left(\dudt - X_H(u) \right) = 0 \\
u(\tau,0) \in o_N, \, u(\tau,1) \in T^*_qN.
\end{cases}
\ee

An element $\alpha \in CF(H;o_N,T^*_qN)$ is expressed as a finite
sum
$$
\alpha = \sum_{z \in \CC hord(H;o_N,T_q^*N)} a_z [z], \quad a_z \in
\Z.
$$
We denote the level of the chain $\alpha$ by
$$
\lambda_H(\alpha): = \max_{z \in \supp \alpha} \{\CA^{cl}_H(z)\}.
$$
The resulting invariant $\rho^{lag}(H;[q])$ is to be defined by the mini-max
value
$$
\rho^{lag}(H;[q]) = \inf_{\alpha \in [q]}\lambda_H(\alpha)
$$
where $[q]$ is a generator of the homology group
$HF(o_N, T^*_qN) \cong \Z$.

 A priori, $\rho^{lag}(H;[q])$ is defined when
$\phi_H^1(o_N)$ intersects $T_q^*N$ transversely but can be extended
to non-transversal $q$'s by continuity. By varying $q \in N$, this
defines a function $f_H: N \to \R$ which is precisely the one called the basic
phase function in \cite{oh:jdg}.

\begin{prop}[Section 7 \cite{oh:jdg}]
There exists a solution $z:[0,1] \to T^*N$ of $\dot z = X(t,z)$ such
that $z(0)=q, \, z(1) \in o_N$ and $\CA^{cl}_{H}(z) = \rho^{lag}(H;\{q\})$
\emph{whether or not $\phi_H^1(o_N)$ intersects $T_q^*N$
transversely.}
\end{prop}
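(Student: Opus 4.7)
The assertion is that even when transversality fails, the mini--max value $\rho^{lag}(H;\{q\})$ is realized by an actual Hamiltonian chord. The natural strategy is to approximate the non-transversal point $q$ by transversal ones, extract a limiting chord, and use continuity of the basic phase function (established in \cite{oh:jdg}) to identify its action with $\rho^{lag}(H;\{q\})$.

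\smallskip

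\noindent\textit{Step 1 (approximation by transversal points).} A point $q\in N$ is a transversal intersection point for the pair $(\phi_H^1(o_N),T^*_qN)$ precisely when $q$ is a regular value of the composition $\pi|_{\phi_H^1(o_N)}\colon \phi_H^1(o_N)\to N$. By Sard's theorem, the set of such regular values is of full measure in $N$, so we may choose a sequence $q_n\to q$ with $\phi_H^1(o_N)\pitchfork T^*_{q_n}N$.

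\smallskip

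\noindent\textit{Step 2 (extraction of minimizing chords).} For each such $q_n$ the set $\CC hord(H;o_N,T^*_{q_n}N)$ is finite, and the minimax $\rho^{lag}(H;\{q_n\})$ is attained by some chord $z_n\in\CC hord(H;o_N,T^*_{q_n}N)$, i.e.\
\begin{equation*}
\CA^{cl}_H(z_n) \;=\; \rho^{lag}(H;\{q_n\}), \qquad z_n(0)\in o_N,\ z_n(1)\in T^*_{q_n}N.
\end{equation*}

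\smallskip

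\noindent\textit{Step 3 (compactness).} Every chord $z_n$ has the form $z_n(t)=\phi_H^t(x_n)$ for some $x_n\in o_N$, so its image lies in the compact set $\bigcup_{t\in[0,1]}\phi_H^t(o_N)\subset T^*N$; this uses that $H$ is compactly supported (as in the setup of $\mathbb F$ in the previous section) and that $o_N$ is compact. The family $\{z_n\}$ is uniformly $C^1$-bounded because $\dot z_n=X_H(t,z_n)$ and $X_H$ is uniformly bounded on the above compact set. By Arzel\`a--Ascoli we may pass to a subsequence $z_{n_k}\to z$ uniformly in $C^0$, and by standard ODE stability the limit $z$ satisfies $\dot z=X_H(t,z)$, hence $z_{n_k}\to z$ in $C^1$. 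The boundary conditions pass to the limit: $z(0)\in o_N$ and $z(1)\in T^*_qN$ (using $q_{n_k}\to q$).

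\smallskip

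\noindent\textit{Step 4 (action identification).} Since $z_{n_k}\to z$ in $C^1$ and $\CA^{cl}_H$ is continuous in the $C^1$ topology, we have $\CA^{cl}_H(z_{n_k})\to\CA^{cl}_H(z)$. On the other hand, the basic phase function $f_H(q')=\rho^{lag}(H;\{q'\})$ is (Lipschitz) continuous in $q'$, so $\rho^{lag}(H;\{q_{n_k}\})\to\rho^{lag}(H;\{q\})$. Combining the two,
\begin{equation*}
\CA^{cl}_H(z)\;=\;\lim_{k\to\infty}\CA^{cl}_H(z_{n_k})\;=\;\lim_{k\to\infty}\rho^{lag}(H;\{q_{n_k}\})\;=\;\rho^{lag}(H;\{q\}),
\end{equation*}
which is the desired conclusion (after the obvious reparametrization $t\mapsto 1-t$ to match the boundary convention $z(0)=q,\ z(1)\in o_N$ in the statement).

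\smallskip

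\noindent\textit{Main obstacle.} Steps 1, 2, and 4 are essentially formal once one has the continuity of $f_H$ and existence of $\rho^{lag}$-minimizing chords in the transversal case. The real content is Step 3: one must ensure the limit chord exists as a genuine solution of the Hamiltonian ODE. This requires the approximating chords to stay in a compact piece of $T^*N$, which is why the setup of the paper demands $\mathbb F$ to be compactly supported; in the non-compact situation one would have to supply additional growth hypotheses on $H$ to prevent the chords $z_n$ from escaping to infinity.
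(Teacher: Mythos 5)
Your argument is correct, and it follows what is almost certainly the standard route (the present paper does not reproduce the proof; it cites Section~7 of \cite{oh:jdg}, and the approximation-by-transversal-points plus Arzel\`a--Ascoli scheme is the canonical way to push spectrality from the generic to the degenerate case). Two small points worth noting. In Step~2, the attainment of the minimax at a chord is not merely ``existence of minimizing chords'' but is itself the spectrality statement in the transversal case; it is worth making explicit that it holds there because $CF(H;o_N,T^*_{q_n}N)$ is finitely generated, so $\lambda_H(\alpha)=\max_{z\in\supp\alpha}\CA^{cl}_H(z)$ is always an actual action value and the infimum over cycles in the class $[q_n]$ is achieved over a finite set. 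Second, your reparametrization remark at the end is a genuine notational subtlety rather than a triviality: the chords in \eqref{eq:Hamchordeq1} satisfy $z(0)\in o_N$, $z(1)\in T^*_qN$, whereas the proposition's statement reads $z(0)=q$, $z(1)\in o_N$; reversing time replaces $H(t,\cdot)$ by $-H(1-t,\cdot)$, so one must check that the action value $\CA^{cl}_H(z)$ is invariant under this relabeling (it is, up to sign conventions), or simply interpret the proposition with the boundary conditions of \eqref{eq:Hamchordeq1}. Neither point changes the substance of your proof.
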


We summarize the main properties of $f_H$ established in
\cite{oh:jdg}.

\begin{prop}[Theorem 9.1 \cite{oh:jdg}]
When the Hamiltonian $H=H(t,x)$ such that $L = \phi_H^1(o_N)$ is
given, there is a canonical lift $f_H$ defined by $f_H(q): =
\rho^{lag}(H;\{pt\})$ that satisfies \be\label{eq:fversush} f_H \circ
\pi(x) = h_H(x) = \CA^{cl}_H(z_x^H) \ee for some Hamiltonian chord
$z_x^H$ ending at $x \in T^*_qN$. This $f_H$ satisfies the following
property in addition
\be\label{eq:fH}
\|f_H - f_{H'}\|_\infty \leq \|H - H'\|.
\ee
\end{prop}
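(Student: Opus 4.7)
The plan is to establish the two assertions separately: first, the spectrality identity $f_H \circ \pi = h_H$, which pins down the basic phase function to the action of a specific Hamiltonian chord, and second, the $C^0$-Lipschitz estimate in the Hofer norm. Since the entire construction is a Floer-theoretic mini-max on $CF(H;o_N,T^*_qN)$, both statements should follow from standard spectrality and continuation arguments adapted to the Lagrangian open-string setting.

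For the spectrality identity, I would first work at a generic $q \in N$ where $\phi_H^1(o_N) \pitchfork T_q^*N$, so that the chord set $\CC hord(H;o_N,T_q^*N)$ is finite and every chain $\alpha$ has $\lambda_H(\alpha) = \CA^{cl}_H(z)$ for some $z \in \supp\alpha$. The mini-max
\[
\rho^{lag}(H;[q]) \;=\; \inf_{\alpha \in [q]} \lambda_H(\alpha)
\]
is then realized by an actual cycle, and since the action spectrum $\{\CA^{cl}_H(z) \mid z \in \CC hord(H;o_N,T_q^*N)\}$ is finite, the infimum is attained at a chord $z_q^H$. Taking $x = z_q^H(1) \in T^*_qN$ gives $z_x^H := z_q^H$ with $\CA^{cl}_H(z_x^H) = f_H(q) = f_H \circ \pi(x)$, which is the definition of $h_H$. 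For non-transverse $q$, one uses lower semicontinuity of $\rho^{lag}$ and compactness of the chord space under small perturbation to extract a limit chord realizing the infimum, as in section 7 of \cite{oh:jdg}.

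For the Hofer estimate, the key tool is the Floer continuation map $\Phi_{HH'}:CF(H;o_N,T^*_qN)\to CF(H';o_N,T^*_qN)$ built from solutions of the $s$-dependent Cauchy–Riemann equation interpolating between $H$ and $H'$ with Lagrangian boundary conditions on $o_N$ and $T_q^*N$. A standard energy calculation shows that for any cycle $\alpha$ representing $[q]$,
\[
\lambda_{H'}(\Phi_{HH'}(\alpha)) \;\leq\; \lambda_H(\alpha) + \int_0^1 \max_{x}\bigl(H'(t,x)-H(t,x)\bigr)\,dt,
\]
and the symmetric inequality holds by reversing roles. Taking infima and using the triangle inequality gives $|f_H(q)-f_{H'}(q)| \leq \|H-H'\|$ pointwise, hence in $\|\cdot\|_\infty$.

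The main obstacle will be the continuity extension at non-transverse $q$: one has to guarantee that the chord $z_x^H$ realizing the mini-max persists in the limit and that the action values converge. This requires a careful Gromov-type compactness argument for chords with one varying Lagrangian boundary together with a priori action/energy bounds; the alternative route via generating function theory, as in \cite{PPS, chaperon}, gives the same Lipschitz conclusion but does not naturally provide the Floer-theoretic chord representative needed for the action identity. Once this technical point is in place, the rest is bookkeeping with the Floer complex and the continuation estimate.
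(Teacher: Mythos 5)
The paper does not prove this statement at all: it is quoted verbatim from Theorem~9.1 of \cite{oh:jdg} and used as background, so there is no in-paper proof to compare against. Your outline, however, does track the two ingredients that the cited reference (and its companions \cite{oh:lag-spectral,PPS}) uses: spectrality of the Lagrangian spectral invariant for transverse $q$ followed by extension to non-transverse $q$ by compactness of the chord set, and the continuation-map energy estimate for the Hofer bound. In that sense the proposal is the standard and essentially the intended argument.

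Two points are worth flagging. First, with the paper's sign convention $\CA^{cl}_H(\gamma)=\int\gamma^*\theta-\int_0^1 H\,dt$, the continuation estimate comes out as
$$
\lambda_{H'}(\Phi_{HH'}(\alpha)) \;\leq\; \lambda_H(\alpha) + \int_0^1 \max_x\bigl(H(t,x)-H'(t,x)\bigr)\,dt,
$$
i.e.\ the sign inside the max is reversed from what you wrote; this is harmless since you immediately symmetrize, but should be corrected. Second, and more substantively, the two one-sided bounds $\rho^{lag}(H';[q])-\rho^{lag}(H;[q])\leq\int\max(H-H')\,dt$ and $\rho^{lag}(H;[q])-\rho^{lag}(H';[q])\leq\int\max(H'-H)\,dt$ do \emph{not} by themselves yield $|f_H-f_{H'}|\leq\|H-H'\|=\int\osc(H-H')\,dt$: if $H-H'\equiv c>0$ is a nonzero constant, the left side is $|c|$ while the right side vanishes. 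One must first use the shift law $\rho^{lag}(H+c(t);[q])=\rho^{lag}(H;[q])-\int c\,dt$ to replace $H-H'$ by its pointwise-in-$t$ centered version (subtracting $\frac{1}{2}(\max_x+\min_x)(H_t-H'_t)$), after which the centered maxima are each bounded by $\frac{1}{2}\osc$ and the estimate follows. Without this normalization step the argument has a genuine gap; with it, your proposal is correct.
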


An immediate corollary of this proposition is the following proved in \cite{oh:jdg,oh:homotopy}.

\begin{cor}\label{cor:fHiconv} If $H_i$ converges in $L^{(1,\infty)}$, then $f_{H_i}$ converges
uniformly.
\end{cor}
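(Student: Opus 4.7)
The plan is to deduce the corollary directly from the Lipschitz estimate
$$
\|f_H - f_{H'}\|_\infty \leq \|H - H'\|
$$
that is recorded as \eqref{eq:fH} in the preceding proposition, where $\|\cdot\|$ on the right-hand side denotes the Hofer length $\|H\| = \int_0^1 \osc(H_t)\,dt$ of the generating Hamiltonian. Combined with the fact that the $L^{(1,\infty)}$-norm dominates the Hofer norm (one has $\osc(H_t) \leq 2\|H_t\|_{C^0}$, hence $\|H\| \leq 2\|H\|_{(1,\infty)}$), this will immediately convert an $L^{(1,\infty)}$-Cauchy condition on the sequence into a uniform Cauchy condition on the associated basic phase functions.

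First I would observe that by assumption the sequence $\{H_i\}$ is Cauchy in $L^{(1,\infty)}$, so for every $\e > 0$ there exists $N_0 \in \N$ such that $\|H_i - H_j\|_{(1,\infty)} < \e/2$ whenever $i,j \geq N_0$. Next, applying \eqref{eq:fH} to the pair $(H_i, H_j)$ and using the domination inequality just mentioned gives
$$
\|f_{H_i} - f_{H_j}\|_\infty \leq \|H_i - H_j\| \leq 2\|H_i - H_j\|_{(1,\infty)} < \e
$$
for all $i,j \geq N_0$. This exhibits $\{f_{H_i}\}$ as a Cauchy sequence in $C^0(N,\R)$ with respect to the uniform norm.

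Finally, since the space $(C_b^0(N,\R), \|\cdot\|_\infty)$ of bounded continuous functions on $N$ is complete and each $f_{H_i}$ is continuous (indeed Lipschitz, by the construction reviewed from \cite{oh:jdg}), the Cauchy sequence $\{f_{H_i}\}$ converges uniformly to a continuous limit function, which is the desired conclusion. The proof is essentially immediate and there is no serious obstacle: the entire content of the corollary is that the Lipschitz estimate \eqref{eq:fH}, when combined with the natural comparison between the Hofer norm and the $L^{(1,\infty)}$-norm used in Definition \ref{topflowdefn}, upgrades $L^{(1,\infty)}$-convergence of Hamiltonians to uniform convergence of their basic phase functions.
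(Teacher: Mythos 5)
Your proof is correct and takes essentially the same approach the paper intends: the statement is an immediate consequence of the Lipschitz estimate $\|f_H - f_{H'}\|_\infty \leq \|H - H'\|$ from the preceding proposition, turning an $L^{(1,\infty)}$-Cauchy condition on $\{H_i\}$ into a uniform Cauchy condition on $\{f_{H_i}\}$, and then completeness of $C^0(N)$ yields the uniform limit. The only superfluous step is the comparison ``$\|H\| \leq 2\|H\|_{(1,\infty)}$'': in this paper the Hofer norm $\|H\| = \int_0^1 \osc(H_t)\,dt$ \emph{is} the $L^{(1,\infty)}$-norm in which the Cauchy condition of Definition~\ref{topflowdefn} is formulated, so no domination constant is needed.
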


\begin{rem} We would like to emphasize that there is no such $C^0$-control of the
basic generating function $h_H$ even when $H \to 0$ in hamiltonian topology.
\end{rem}

Based on the above proposition, we define

\begin{defn}\label{defn:fH} Denote by $H^a$ the Hamiltonian generating the
rescaled isotopy $t \mapsto \phi_H^{at}$ for $a > 0$.
For any given topological Hamiltonian $H=H(t,x)$, we define its
timewise basic phase function by
\be\label{eq:limitfH}
{\bf f}_H(t,x): = \lim_{i \to \infty}f_{H_i^t}(x)
\ee
for any approximation sequence $H_i$ of $H$.
\end{defn}
We will always denote a parametric version in bold-faced letters.

We note that the basic generating function $h_{H_i}$ could behave wildly as a
whole. But Proposition \label{prop:fHiconv} restricted to the basic Lagrangian
selector converges nicely.
Note that $\pi_H= \pi|_{L_H}: L_H = \phi_H^1(o_N) \to N$ is surjective for all $H$ and so
$\pi_H^{-1}(q) \subset o_N$ is a non-empty
compact subset of $o_N \cong N$. Therefore we can regard the
`inverse' $\pi_H^{-1}:N \to L_H \subset T^*N$
as an everywhere defined multivalued section of $\pi: T^*N \to N$.

We introduce the following general definition

\begin{defn} Let $L \subset T^*N$ be a Lagrangian submanifold projecting
surjectively to $N$. We call a single-valued section $\sigma$ of $T^*N$ with values
lying in $L$ a \emph{Lagrangian selector} of $L$.
\end{defn}

Once the graph selector $f_H$ of $L_H$ is picked out, it provides a
natural Lagrangian selector defined by
$$
\sigma_H(q): = \text{Choice}\{x \in L_H \mid \pi(x) = q, \,  \CA^{cl}_H(z^H_x) = f_H(q)\}
$$
via the axiom of choice where $\text{Choice}$ is a choice function. It satisfies
\be\label{eq:sigmaHdfH}
\sigma_H(q) = df_H(q)
\ee
whenever $df_H(q)$ is defined. We call this particular Lagrangian selector of
$L_H$ the \emph{basic Lagrangian selector.}
The general structure theorem of the wave front (see
\cite{eliash:front}, \cite{PPS} for example) proves that the section
$\sigma_H$ is a differentiable map on a set of full measure for a
generic choice of $H$ which is, however, \emph{not necessarily
continuous}: This is because as long as $q \in N \setminus
\operatorname{Sing}f_H$, we can choose a small open neighborhood of
$U \subset N \setminus \operatorname{Sing}f_H$ of $q$ and $V \subset
L_H = \phi_H^1(o_N)$ of $x \in V$ with $\pi(x) = q$ so that the
projection $\pi|_V: V \to U$ is a diffeomorphism.

\section{Calabi homomorphism and basic phase function}
\label{sec:extension}

Suppose $F$ is a topological Hamiltonian and $F_i$ its approximating sequence
and define $\mathbb F_i$ and $\mathbb F$ as in section \ref{sec:engulfed}.

We first prove the following general theorem in arbitrary dimension. We recall
that  $f_{\underline{\mathbb F_i}}$ converges to $f_{\underline{\mathbb F}}$ uniformly.

\begin{thm}\label{thm:average=Cal} Let $\lambda = \phi_F$ be any
contractible topological Hamiltonian loop
in $\CP^{ham}(Sympeo_U(M,\omega),id)$  and
with $U = M \setminus B$  where $B$ is a closed subset of nonempty interior.
Choose an approximating sequence $F_i$. Denote by
$$
\overline \Cal(F) = \int_0^1 \int_M F\, \mu_\omega \, dt
$$
for the Liouville measure associated to $\omega$. Then
\be\label{eq:limitfH}
f_{\underline{\mathbb F}}(x)\equiv \frac{\overline \Cal(F)}{\vol_\omega(M)}
\ee
for all $x \in M$.
\end{thm}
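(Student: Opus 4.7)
The plan is to first compute the basic phase function pointwise at any diagonal point $\mathbf{q}=(q,q)\in \Delta$ with $q\in\Int B$, where the Hamiltonian dynamics degenerates completely, and then to propagate this value to all of $\Delta$ via a Lipschitz-constant bound that vanishes in the limit, crucially using the loop hypothesis.

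For the pointwise computation at $\mathbf{q}$, I would choose the approximating sequence so that $\supp F_i\subset U$, ensuring that $F_i(t,\cdot)$ vanishes on $B$ and the mean-normalized Hamiltonian is spatially constant there:
\[
\underline{F_i}(t,q) = -c_i(t), \qquad c_i(t) := \frac{1}{\vol_\omega(M)}\int_M F_i(t,x)\,\mu_\omega, \quad q\in B.
\]
Consequently $X_{\underline{F_i}}$ vanishes on $\Int B$, and in the Darboux--Weinstein chart where $\chi\equiv 1$ near the zero section, the lifted flow $\phi^t_{\underline{\mathbb F_i}}$ fixes $\mathbf{q}$ for every $t$. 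Hence the chord $z_{\mathbf{q}}^{\underline{\mathbb F_i}}$ from \eqref{eq:tildehK1} is the constant loop at $\mathbf{q}$, and
\[
h_{\underline{\mathbb F_i}}(\mathbf{q}) = \CA^{cl}_{\underline{\mathbb F_i}}(z_{\mathbf{q}}^{\underline{\mathbb F_i}}) = -\int_0^1 \underline{F_i}(t,q)\,dt = \int_0^1 c_i(t)\,dt.
\]
Since the chord equation $\dot z = X_{\underline{\mathbb F_i}}(z)$ admits only the constant solution in a neighborhood of $\mathbf{q}$ in $T^*\Delta$, the basic Lagrangian selector satisfies $\sigma_{\underline{\mathbb F_i}}(\mathbf{q})=\mathbf{q}$, and \eqref{eq:fK1} yields $f_{\underline{\mathbb F_i}}(\mathbf{q}) = \int_0^1 c_i(t)\,dt$.

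For the extension, the loop hypothesis $\phi_F^1 = \mathrm{id}$ implies $\phi^1_{\underline{\mathbb F_i}}\to \mathrm{id}$ in $C^0$, so the deformed Lagrangian $L_i = \phi^1_{\underline{\mathbb F_i}}(o_\Delta)$ converges to $o_\Delta$ in Hausdorff distance. The gradient bound \eqref{eq:dfL} then controls the Lipschitz constant,
\[
\mathrm{Lip}(f_{\underline{\mathbb F_i}}) \leq \max_{x\in L_i}|p(x)| \longrightarrow 0,
\]
and combined with the uniform convergence $f_{\underline{\mathbb F_i}} \to f_{\underline{\mathbb F}}$ from Corollary \ref{cor:fHiconv}, passing to the limit in the Lipschitz estimate forces the limit $f_{\underline{\mathbb F}}$ to have zero Lipschitz constant on the connected manifold $\Delta$, hence to be constant. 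Combining with the first step and passing $F_i \to F$ under the integral using $L^{(1,\infty)}$ convergence, I obtain
\[
f_{\underline{\mathbb F}}(x) \equiv f_{\underline{\mathbb F}}(\mathbf{q}) = \lim_{i\to\infty}\int_0^1 c_i(t)\,dt = \frac{\overline{\Cal}(F)}{\vol_\omega(M)}.
\]

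The main obstacle is the Floer-theoretic identification in the first step: justifying that the min-max value $\rho^{lag}(\underline{\mathbb F_i};[\mathbf{q}])$ is attained precisely by the constant chord rather than by some competing chord in the same homology class. The engulfed hypothesis is essential here, as it localizes the Floer complex to a tubular neighborhood of $\Delta$ so that the generator of $HF(o_\Delta,T^*_{\mathbf{q}}\Delta)\cong \Z$ is represented unambiguously by the constant chord. The loop property $\phi_F^1 = \mathrm{id}$ is equally indispensable, for otherwise the trajectory $z_{\mathbf{q}}^{\underline{\mathbb F_i}}$ would fail to close up and an extra boundary term from $\int z^*\theta$ would contaminate the clean identification with the mean $\int_0^1 c_i(t)\,dt$; this is precisely where the argument must exploit that we are dealing with a loop and not merely a path.
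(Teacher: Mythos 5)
Your outline --- compute $f_{\underline{\mathbb F_i}}$ at a point $\mathbf{q}\in o_B$, then use the loop hypothesis to drive the Lipschitz constant to zero --- is the paper's strategy. But the step you yourself flag as ``the main obstacle'' is where the argument breaks, and the fix you suggest is not the correct one.

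The claim ``$\sigma_{\underline{\mathbb F_i}}(\mathbf{q})=\mathbf{q}$ because the chord equation admits only the constant solution near $\mathbf{q}$'' is a non sequitur: the min-max $\rho^{lag}(\underline{\mathbb F_i};[\mathbf{q}])$ ranges over chords landing anywhere on the fiber $T^*_{\mathbf{q}}\Delta$, including intersection points of $\Graph\phi_{F_i}^1$ with $T^*_{\mathbf{q}}\Delta$ whose initial points $(q',q')\in o_\Delta$ lie far from $\mathbf{q}$, and local uniqueness near $\mathbf{q}$ says nothing about those. Engulfedness alone does not ``localize the Floer complex so the generator is represented unambiguously by the constant chord'' --- it only confines $\Graph\phi_{F_i}^1$ to a Darboux--Weinstein neighborhood, which still permits far-away fiber intersections. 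And the loop hypothesis is not needed to make the constant chord ``close up'' --- a constant chord at a fixed point closes trivially whether or not $\phi_F$ is a loop; your stated reason for needing the loop property misidentifies the mechanism.

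The paper's actual resolution of this step is quantitative: spectrality (Theorem 5.3 of \cite{oh:lag-spectral}) furnishes a realizing intersection point $\mathbf{x}$ with $(\phi_{\mathbb F_i}^1)^{-1}(\mathbf{x})=(q',q')$; then $\phi_{F_i}^1\to\mathrm{id}$ in $C^0$ (from the loop hypothesis) together with $|df_{\underline{\mathbb F_i}}|\to 0$ gives $d((\phi_{\mathbb F_i}^1)^{-1}(\mathbf{x}),\mathbf{x})<\delta/4$ and $d(\mathbf{x},\pi_\Delta(\mathbf{x}))<\delta/4$, which pushes $q'$ into the $\delta/2$-neighborhood of $B$ where $F_i\equiv 0$ and $\phi_{F_i}^t\equiv\mathrm{id}$. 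Only then is the realizing chord forced to be constant and $f_{\underline{\mathbb F_i}}(\mathbf{q})=\int_0^1 c_i(t)\,dt$ justified --- and only for $i$ sufficiently large, not term by term as your presentation suggests. (In the special $D^2\subset S^2$ situation, geodesic convexity of $D^2_+$ replaces this $C^0$-closeness argument and even yields the stronger path version, Theorem \ref{thm:disccase}, which is why the paper isolates it separately.) The rest of your proposal --- the Lipschitz bound shrinking to zero and the $L^{(1,\infty)}$ passage to the limit --- is sound and matches the paper.
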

\begin{proof}
Let $\underline{F_i}= \Dev(\phi_{F_i})$ which is given by
$$
\underline{F_i}(t,x) = F_i(t,x) - c_i(t)
$$
where
$$
c_i(t) = \frac{1}{\vol_\omega(M)} \int_M F_i(t,x)\, \mu_\omega.
$$
Then we have
\be\label{eq:underlineFi}
\underline{F_i}(t,x) \equiv - c_i(t)
\ee
and so
$$
\int_0^1 \underline{F_i}(t,x)\, dt = - \int_0^1 c_i(t) \, dt = - \frac{\Cal_U(F_i)}{\vol_\omega(M)}
$$
for all $x \in B$.

Since $F_i$ is an approximating sequence of topological Hamiltonian $F$, it follows
$F_i \to F$ in $L^{(1,\infty)}$-topology.
Therefore applying \eqref{eq:fH} to $H=F_i$ and $H' = 0$ and using
the convergence $\|F - F_i\| \to 0$ as $i \to \infty$, we obtain the inequality
$$
|f_{\mathbb F_i}| \leq \|F\| + \frac{1}{2}
$$
for all sufficiently large $i$'s.

\emph{Here now enters in a crucial way the fact that $\phi_F$ generates a topological Hamiltonian
loop, not just a path.}
Together with the Lipschitz property of $f_{\mathbb F_i}$ and the inequality (see \eqref{eq:dfL})
$$
|df_{\underline{\mathbb F_i}}| \leq \overline d(\phi_{\underline{F_i}}^1,id) \to 0,
$$
it immediately follows from the co-area formula (see Theorem 1 of section 3.4.2 \cite{evans},
for example)
that we can choose a subsequence, again denoted by $F_i$, so that
$f_{\underline{\mathbb F_i}} \to c$ uniformly for some constant $c$.

Therefore it remains to show that this constant is indeed the value $\frac{\overline \Cal(F)}{\vol_\omega(M)}$.
Denote $K = \supp F$ which is a compact subset of $U = M \setminus B$.
We now recall the definition of Hamiltonian topology on noncompact manifolds,
Definition \ref{defn:directlimit}. By definition, there exists $\delta > 0$ such that
$$
\supp F_i \subset \Int K(1+\delta/2) \subset K(1+\delta) \subset U
$$
where $K(1+\delta)$ is the (closed)
$\delta$-neighborhood of $K$ for all sufficiently large $i$'s. In particular,
\be\label{eq:Bdelta/2}
B(1+\delta/2) \subset M \setminus K(1+\delta/2).
\ee

For any such $i$'s, we also have
$$
F_i \equiv  0, \quad \phi_{F_i}^t \equiv id
$$
on $B(1+\frac{\delta}{2})$. In particular,
$$
\Graph F_i \cap o_\Delta \supset o_{B(1+\frac{\delta}{2})}.
$$
Therefore the same properties stated above as for $F_i$ still hold for $\underline{F_i}$
except the values thereof on $B$ are changed to $ -c_i(t)$.

Let ${\bf q} \in o_B$ be any point in its interior.
By the spectrality of the values of $f_{\underline{{\mathbb F}_i}}({\bf q})$ (Theorem 5.3 \cite{oh:lag-spectral}),
there is a point ${\bf x} \in T_{\bf q}^*M \cap \Graph \phi_{F_i}^1$ such that
$(\phi_{\mathbb F_i}^1)^{-1}({\bf x}) \in o_\Delta$
and
$$
f_{\underline{\mathbb F_i}}({\bf q}) = \CA^{cl}(z_{\bf x}^{\underline{\mathbb F_i}}).
$$
We denote $(\phi_{\mathbb F_i}^1)^{-1}({\bf x})  = (q', q')$.

 Because of this, $\phi_{F_i}^1 \to id$ as $i \to \infty$ by definition of
the approximating sequence $F_i$ of $F$.
Combining this with ${\bf q} = (q,q) \in \Int o_B$, $\pi_\Delta({\bf x}) = {\bf q}$, we derive
$$
d((\phi_{\mathbb F_i}^1)^{-1}({\bf x}), {\bf x}), \, d({\bf x}, \pi_\Delta({\bf x})) < \frac{\delta}{4}
$$
for all sufficiently large $i$'s. Then $d((\phi_{\mathbb F_i}^1)^{-1}({\bf x}), (q,q)) < \frac{\delta}{2}$.
Since $\underline{\mathbb F_i}(t,{\bf x})
= \underline{F_i}(t,x)$ for ${\bf x} = (x,y)$,
the associated Hamiltonian trajectory $z_{\bf x}^{\underline{\mathbb F}}$ has the form $(\phi_{F_i}^t(q'),q')$
where $(\phi_{\mathbb F_i}^1)^{-1}({\bf x}) = (q',q')$. But $d(q,q') < \frac{\delta}{2}$ and hence
$q' \in M \setminus B(1+\delta) \subset K(1+\frac{\delta}{2})$. (We refer to the proof of Lemma 7.5 \cite{oh:lag-spectral} for
a similar argument used for a similar purpose.)

Therefore $\phi_{F_i}^t(q') \equiv q'$ for all
$t \in [0,1]$. This proves that $z_{\bf x}^{\mathbb F}$ must be the constant
trajectory $z_{\bf x}^{\mathbb F}(t) \equiv {\bf q}$. Then we compute its action
value
\beastar
f_{\underline{\mathbb F_i}}({\bf q}) & = &
\CA^{cl}(z_{\bf x}^{\underline{\mathbb F_i}}) \\
& = & - \int_0^1 \underline{\mathbb F_i}(t, {\bf q})\, dt
= - \int_0^1 \underline{F_i}(t, q)\, dt = \int_0^1 c_i(t)\, dt  = \frac{\Cal_U(F_i)}{\vol_\omega(M)}.
\eeastar
Since $F_i \to F$ in $L^{(1,\infty)}$-topology and $\supp \phi_{F_i}, \, \supp \phi_F \subset U$,
it also follows $\Cal_U(F_i) \to \overline{\Cal_U}(F)$ as $i \to \infty$. This proves
indeed $f_{\underline{\mathbb F_i}} \to \frac{\overline{\Cal}_U(F)}{\vol_\omega(M)}$.
\end{proof}

 An examination of the argument at the end of the proof leading to the
identification of the constant with the $\frac{\overline{\Cal}_U(F)}{\vol_\omega(M)}$ shows that the reason why
the convergence $\phi_{F_i}^1 \to id$ enters is because we need for the projection
$\pi_\Delta({\bf x})$ to lie outside $\supp F_i$ to get the required identification. This needed property automatically holds
for the projection $U_\Delta \to \Delta$ with $B = D^2_-$ of the
canonical Darboux-Weinstein neighborhood obtained through the embedding \eqref{eq:PhiD1} in section \ref{sec:engulfed}.
This is because under this embedding the projection $\pi_\Delta(x,y)$ is nothing but the mid-point projection
of $(x,y)$ along the geodesic connecting the points $x, \, y \in S^2$. Since
the upper hemisphere $D^2_+ \subset S^2$ is gedesically convex, $\pi_\Delta(x,y)$ is
always contained in $\Int D^2_+$ whenever $x, \, y \in \Int D^2_+$.
In particular $\pi_\Delta(\phi_{F_i}^1(q'),q') \in \Int D^2_+$
if $q' \in \Int D^2_+$ and hence the point $\pi_\Delta({\bf x}) = \pi_\Delta(\phi_{F_i}^1(q'),q')$ cannot be
projected to a point $(q,q)$ with $q \in B = D^2_-$ \emph{irrespective of the convergence $\phi_{F_i}^1 \to id$}.
This eliminates the above somewhat subtle argument for the case of our main interest.
An implication of this consideration leads to the following stronger result for this
case in that it applies to an arbitrary \emph{path} not just to loops.

\begin{thm}\label{thm:disccase}
Let $\lambda = \phi_F$ be any topological Hamiltonian path supported in
$\Int D^2$. Denote by $\mathbb F$ the associated Hamiltonian on $D^1(T^*\Delta_{S^2}) \cong S^2 \times S^2 \setminus \overline
\Delta_{S^2}$ constructed as before (via the embedding \eqref{eq:PhiD1}). Then
$$
f_{\underline{\mathbb F}}(x) = \frac{\overline\Cal(F)}{\vol(S^2)}
$$
for all $x \in D^2_-$.
\end{thm}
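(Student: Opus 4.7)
The plan is to adapt the proof of Theorem \ref{thm:average=Cal} but to exploit the sharper geometric rigidity specific to the Darboux--Weinstein embedding \eqref{eq:PhiD1}, as already sketched by the author in the paragraph preceding the theorem. Under this identification, $\pi_\Delta(x,y)$ is the midpoint of the (unique) short geodesic joining $x$ and $y$ on $S^2$; since the closed upper hemisphere $D^2_+$ is geodesically convex with respect to short geodesics, any pair $(x,y) \in (D^2_+ \times D^2_+) \setminus \overline\Delta$ is projected into $D^2_+$. This hard geometric obstruction is what will replace the loop hypothesis that was essential in Theorem \ref{thm:average=Cal}.

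I would begin by fixing an approximating sequence $F_i$ of $F$ with $\supp F_i \subset K$ for a compact $K \subset \Int D^2_+$ independent of $i$, and setting $\underline{F_i}(t,x) = F_i(t,x) - c_i(t)$ with $c_i(t) = \frac{1}{\vol(S^2)} \int_{S^2} F_i(t,y)\, \mu_\omega$, so that $\underline{F_i}(t,q) = -c_i(t)$ for every $q \in D^2_-$. Then I define the lifted Hamiltonian $\underline{\mathbb F_i}(t,(x,y)) = \chi(x,y)\, \underline{F_i}(t,x)$ as in Section \ref{sec:engulfed}. From \eqref{eq:fH} applied to $\underline{\mathbb F_i}$ versus $0$, together with Corollary \ref{cor:fHiconv}, one obtains a uniform bound on $|f_{\underline{\mathbb F_i}}|$ and uniform convergence $f_{\underline{\mathbb F_i}} \to f_{\underline{\mathbb F}}$.

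The core step: pick any $q \in \Int D^2_-$ and set ${\bf q} = (q,q) \in o_\Delta$. By the spectrality of the Lagrangian spectral invariant (Theorem 5.3 of \cite{oh:lag-spectral}), there exists ${\bf x} \in T^*_{\bf q}\Delta \cap \phi_{\mathbb F_i}^1(o_\Delta)$ realizing $f_{\underline{\mathbb F_i}}({\bf q}) = \CA^{cl}_{\underline{\mathbb F_i}}(z_{\bf x}^{\underline{\mathbb F_i}})$. Writing $(\phi_{\mathbb F_i}^1)^{-1}({\bf x}) = (q',q') \in o_\Delta$ and using the product structure $\phi_{\mathbb F_i}^t = \phi_{F_i}^t \times id$ valid on $V_\Delta$ (where $\chi \equiv 1$ and $\mathbb F_i = \pi_1^* F_i$), one has ${\bf x} = (\phi_{F_i}^1(q'), q')$. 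The requirement $\pi_\Delta({\bf x}) = {\bf q}$ reads: the geodesic midpoint of $\phi_{F_i}^1(q')$ and $q'$ equals $q \in \Int D^2_-$. If $q' \in D^2_+$, then $\phi_{F_i}^1(q') \in D^2_+$ as well, and geodesic convexity of $D^2_+$ would force this midpoint into $D^2_+$, contradicting $q \in \Int D^2_-$; hence $q' \in D^2_-$, on which $\phi_{F_i}^t \equiv id$. Consequently $\phi_{F_i}^1(q') = q'$, so ${\bf x} = (q',q')$ and $q' = q$. Thus $z_{\bf x}^{\underline{\mathbb F_i}}$ is the constant trajectory at ${\bf q}$, and its action computes to
\[
f_{\underline{\mathbb F_i}}({\bf q}) = -\int_0^1 \underline{\mathbb F_i}(t,{\bf q})\, dt = \int_0^1 c_i(t)\, dt = \frac{\overline\Cal(F_i)}{\vol(S^2)}.
\]
Passing to the limit yields $f_{\underline{\mathbb F}}(q) = \overline\Cal(F)/\vol(S^2)$ for all $q \in \Int D^2_-$, and continuity of $f_{\underline{\mathbb F}}$ extends the identity to all of $D^2_-$.

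The only subtlety, rather than an obstacle, is to verify that the product factorization $\phi_{\mathbb F_i}^t = \phi_{F_i}^t \times id$ applies along the candidate constant trajectory: this is automatic since $\chi \equiv 1$ on $V_\Delta \supset \Graph \phi_{F_i}^t$ and the trajectory $t \mapsto (q,q)$ lies in $\Delta \subset V_\Delta$. The decisive ingredient --- and what makes the present theorem stronger than Theorem \ref{thm:average=Cal}, in that no loop assumption is required --- is the geodesic convexity of $D^2_+$, which rigidly prevents the midpoint projection $\pi_\Delta$ from sending any pair $(\phi_{F_i}^1(q'), q') \in D^2_+ \times D^2_+$ into the open lower hemisphere, regardless of whether $\phi_{F_i}^1$ is close to the identity.
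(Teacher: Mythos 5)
Your proposal is correct and is essentially the paper's intended argument: the paper does not give a separate formal proof of Theorem \ref{thm:disccase}, but instead outlines it in the discussion paragraph that precedes the statement, which hinges on exactly the geodesic-midpoint interpretation of $\pi_\Delta$ under the chart \eqref{eq:PhiD1} and the geodesic convexity of $D^2_+$ to force $q' \in D^2_-$, thereby replacing the loop hypothesis used in Theorem \ref{thm:average=Cal}. Your write-up correctly fills in the spectrality step, the product factorization $\phi_{\mathbb F_i}^t = \phi_{F_i}^t \times id$ on $V_\Delta$, the action computation for the constant chord, and the passage to the limit via uniform convergence and Lipschitz continuity.
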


Of course, in this case, $f_{\underline{\mathbb F}}$ will not be constant on $D^2_+$
in general.

\section{Extension of Calabi homomorphism}
\label{sec:vanishing}

We recall from the definition of $\CP^{ham}(Symp_U(M,\omega),id)$
with $U = M \setminus B$ that if $\phi_F \in \CP^{ham}(Symp_U(M,\omega),id)$, then
there exists a 2-parameter Hamiltonian $H = H(s,t,x)$ such that
$\phi_{H(s)}^t \equiv id$ and $H \equiv 0$
on $B = M \setminus U$ for a nonempty open subset of $M$. In particular,
we have $\underline{H}(s) \equiv c(s)$ on $B$ with
$$
c(s) = \frac{1}{\vol_\omega(M)}\int_M H(s)\, \omega.
$$
Engulfedness of $H$ enables us to do computations on a Darboux-Weinstein neighborhood $V_\Delta$ of the diagonal
$\Delta \subset M \times M$, which we regard either as a subset of
$M\times M$ or that of $T^*\Delta$ depending on the given circumstances.
At the end, we will apply the computations to the given approximating sequence of
hameotopy of contractible topological Hamiltonian loop.

Now we further specialize to the case of our main interest $D^2$. We embed $D^2$
into $S^2$ as the upper hemisphere $D^2_+$ and denote $B = D^2_-$, the lower hemisphere.

The following is the main conjecture to beat which was originally proposed in \cite{oh:homotopy}.
This is the only place where the restriction to the two-disc $D^2$ is needed, but we expect
the same vanishing result hold for higher dimensional disc $D^{2n}$ or even for general
pair $(M,B)$, which is a subject of future study.

\begin{conj}\label{conj:signchange} Assume $M = S^2$ and
$B = D^2_-$ be the lower hemisphere as above.
Let $\Lambda= \left\{\phi_{H(s)}^t\right\}_{(s,t) \in [0,1]^2}$ be a hameotopy
contracting a topological Hamiltonian loop
$\phi_F$ with $F = H(1)$ such that $\phi_{H(s)}^1 \equiv \phi_{H(0)}^t \equiv id$
for all $t, \, s \in [0,1]$.

Let $f_{\underline{\mathbb F}}$ be the limit basic phase function defined by $f_{\underline{\mathbb F}}
= \lim_{i \to \infty} f_{\underline{\mathbb F_i}}$. Then
$$
f_{\underline{\mathbb F}} = 0.
$$
\end{conj}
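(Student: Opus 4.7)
My plan is to combine Theorem~\ref{thm:average=Cal}, which identifies $f_{\underline{\mathbb F}}$ with a single constant determined by the Calabi value, with a continuation argument along the hameotopy $\Lambda$ to show that this constant must vanish. First I would apply Theorem~\ref{thm:average=Cal} at each slice of the hameotopy. By hypothesis each path $\phi_{H(s)}$ is an engulfed topological Hamiltonian loop with $\phi_{H(s)}^t \equiv id$ on $B = D^2_-$, so the theorem yields
\[
f_{\underline{\mathbb H_s}}(x) \equiv c(s) := \frac{\overline{\Cal}_U(H(s))}{\vol_\omega(S^2)}, \qquad x \in S^2,
\]
where $\mathbb H_s$ denotes the cutoff engulfed Hamiltonian built from $H(s,\cdot,\cdot)$ just as $\mathbb F$ is built from $F$. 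By definition of hameotopy, an approximating sequence $H_j(s)$ converges in Hamiltonian topology uniformly in $s$, so the smooth Calabi integrals $\Cal(H_j(s))$, continuous in $s$ for each $j$, converge uniformly to $\vol_\omega(S^2)\,c(s)$; hence $c:[0,1]\to\R$ is continuous. Since $H(0)\equiv 0$ we have $c(0)=0$, and the conjecture reduces to proving $c(1)=0$.

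Next, I would convert each smooth slice into an exact Hamiltonian loop. The map $\phi_{H_j(s)}^1$ is $C^0$-close to $id$ uniformly in $s$, so one can splice $H_j(s)$ with a small correction $G_j(s)$ generating $\phi_{H_j(s)}^1$ along a short Hofer path, forming $\widetilde H_j(s):= H_j(s)\#\overline{G_j(s)}$ which then generates a smooth Hamiltonian loop in $\operatorname{Ham}(D^2,\del D^2)$. Contractibility of this group and the homomorphism property of the classical Calabi invariant give $\Cal(\widetilde H_j(s))=0$, hence
\[
\Cal(H_j(s))=\Cal(G_j(s)).
\]
The claim $c(s)=0$ therefore reduces to $\Cal(G_j(s))\to 0$ uniformly in $s$ as $j\to\infty$.

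The main obstacle is precisely this last step. The examples with $\phi_i\to id$ in $C^0$ and $\Cal(\phi_i)=1$ recalled in Subsection~\ref{subsec:basic-Calabi} show that $C^0$-smallness of $\phi_{H_j(s)}^1$ alone cannot force $\Cal(G_j(s))\to 0$; the extra information to be exploited is that these endpoints are not arbitrary $C^0$-small diffeomorphisms but time-one maps of a Hamiltonian approximation of a topological \emph{loop}, and that the entire hameotopy $\Lambda_j$ converges in Hofer norm uniformly in $s$. The natural route is a Floer continuation argument applied to the two-parameter family $\{H_j(s,t)\}$ through the Lagrangian pair $(o_\Delta, T_q^*\Delta)$: continuation in $s$ identifies the Lagrangian spectral value at $s=1$ with that at $s=0$ up to an error controlled by the Hofer oscillation of the $s$-Hamiltonian $K_j$, and combined slice-wise with the Alexander rescaling of Lemma~\ref{lem:Callambdaa} this should in principle confine the correction $G_j(s)$ to a disc of vanishing area, after which a displacement-energy estimate yields $\Cal(G_j(s))\to 0$. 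This is the route by which Theorem~\ref{thm:weak-graphical} succeeds in the weakly graphical case, using graphicality of the time-one map to make the localisation explicit via Theorem~\ref{thm:disccase}; for the general conjecture one needs a non-graphical analogue of that localisation, which is the substance of the homotopy invariance of spectral invariants proposed in \cite{oh:homotopy} and remains the essential open technical point.
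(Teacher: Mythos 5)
The statement you are asked to prove is Conjecture~\ref{conj:signchange}: the paper itself does \emph{not} prove it in general, and in fact presents it as the main open point. The paper only establishes it under the additional hypothesis of weak graphicality (Theorem~\ref{thm:graphicalloop}), so there is no full proof in the paper to compare against. You correctly recognize this and flag the last step as ``the essential open technical point,'' which is the intellectually honest conclusion. Your initial reductions are sound: applying Theorem~\ref{thm:average=Cal} slice-wise to get $f_{\underline{\mathbb H_s}} \equiv c(s)$, observing continuity of $c$ from the uniform Hofer convergence of $H_j(s)$, noting $c(0)=0$, and reducing the claim to $\Cal(G_j(s)) \to 0$ where $\Cal(H_j(s)) = \Cal(G_j(s))$ after subtracting a contractible smooth loop.

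However, the route you sketch for closing the gap --- Floer continuation in $s$ ``up to an error controlled by the Hofer oscillation of the $s$-Hamiltonian $K_j$'' --- runs directly into the obstruction the paper explicitly points out in Remark~\ref{rem:sHamiltonian} and the remark following Theorem~\ref{thm:graphicalloop}: the $s$-Hamiltonian of an Alexander-type hameotopy (or indeed of a general hameotopy as defined in Definition~\ref{defn:hameotopy}) involves differentiating the $t$-Hamiltonian and has \emph{no} $L^{(1,\infty)}$ or Hofer control whatsoever, even when the $t$-Hamiltonians converge in Hamiltonian topology. A continuation estimate in $s$ would need exactly the control you don't have, so this route cannot work as stated. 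You also slightly mischaracterize the paper's proof of the weakly graphical case: it does not run through a Floer continuation-in-$s$ estimate, but instead through the Hamilton-Jacobi equation~\eqref{eq:ham-jacobi} for the timewise basic phase function (Appendix~\ref{append:HJ-equation}) combined with the explicit rescaling identity of Proposition~\ref{prop:dgada}. The crucial mechanism (Corollary~\ref{cor:Giconti} and Remark~\ref{rem:upshot}) is that even though $\underline{\mathbb G_i}$ itself has no $C^0$ control, the \emph{composition} $\underline{\mathbb G_i}\bigl(a, d_{\bf q}{\bf f}_{\underline{\mathbb G_i}}(a,\cdot)\bigr)$ along the Lagrangian selector can be rewritten entirely in terms of the Hofer-controlled quantities $f_{\mathbb F_i}$ and $df_{\mathbb H_i(a)}$ --- and graphicality is what makes the selector globally single-valued and smooth so this rewriting is possible. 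The genuinely hard (and open) part of the general conjecture is to handle the caustics of $\pi_\Delta|_{\Graph \phi_{\underline{F_i}}}$ when graphicality fails, not to sharpen a continuation estimate.
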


Combining Theorem \ref{thm:average=Cal} and Conjecture \ref{conj:signchange},
we now prove the following.

\begin{thm}\label{thm:descent} Suppose Conjecture \ref{conj:signchange} holds. Then
the homomorphism $\overline{\Cal}^{path}: \CP^{ham}(Sympeo(D^2,\del D^2),id) \to \R$
descends to a homomorphism
$$
\overline{\Cal}: Hameo(D^2,\del D^2) \to \R
$$
which restricts to $\Cal: Ham(D^2,\del D^2) \to \R$.
\end{thm}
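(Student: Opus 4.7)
The plan is to produce $\overline{\Cal}$ by declaring $\overline{\Cal}(g) := \overline{\Cal}^{path}(\lambda)$ for any $\lambda \in \CP^{ham}(Sympeo(D^2,\del D^2),id)$ with $\lambda(1) = g$, and to verify that this value is independent of the choice of $\lambda$. Since $\overline{\Cal}^{path}$ is already known to be a group homomorphism on the path space, the well-definedness reduces by the usual argument to showing that $\overline{\Cal}^{path}(\lambda) = 0$ for every topological Hamiltonian loop $\lambda \in \CP^{ham}(Sympeo(D^2,\del D^2),id)$ based at the identity, i.e., one has to verify Conjecture \ref{conj:loop-vanishing} under the standing assumption that Conjecture \ref{conj:signchange} holds.

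First I would reduce to the engulfed loop on $S^2$. Given any such loop $\lambda = \phi_F$ with $\supp F \subset \Int D^2$, I extend by zero through the embedding $D^2 \hookrightarrow S^2$ as $D^2_+$ to obtain a loop $\iota^+(\lambda) = \phi_F$ in $\CP^{ham}(Sympeo_{D^2_+}(S^2),id)$, which is $\CU$-engulfed for the Darboux--Weinstein neighborhood $\CU = S^2 \times S^2 \setminus \overline{\Delta}$ of section \ref{sec:engulfed}. By construction the integral defining $\overline{\Cal}^{path}(\lambda)$ on $D^2$ coincides with $\overline{\Cal}(F)$ on $S^2$ since $F$ vanishes on $D^2_-$.

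Next I would use Theorem \ref{thm:alexander} to supply a hameotopy $\Lambda(s,t) = \lambda_s(t)$ contracting $\iota^+(\lambda)$ to the constant identity loop through topological Hamiltonian loops, all supported away from $B = D^2_-$; this provides precisely the data required by the hypothesis of Conjecture \ref{conj:signchange} (with $H(1) = F$, $H(0) \equiv 0$, and each $H(s)$ a loop supported in $D^2_+$). Applying Theorem \ref{thm:average=Cal} to the loop $\phi_F$ yields the uniform identity
\be\label{eq:proofplan-avg}
f_{\underline{\mathbb F}}(x) \equiv \frac{\overline{\Cal}(F)}{\vol_\omega(S^2)}
\ee
on all of $S^2$. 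Invoking Conjecture \ref{conj:signchange} then forces the left hand side of \eqref{eq:proofplan-avg} to vanish identically, whence $\overline{\Cal}(F) = 0$ and therefore $\overline{\Cal}^{path}(\lambda) = 0$. Combined with the homomorphism property of $\overline{\Cal}^{path}$, this gives the required equality $\overline{\Cal}^{path}(\lambda_0) = \overline{\Cal}^{path}(\lambda_1)$ whenever $\lambda_0(1) = \lambda_1(1)$, so that $\overline{\Cal}$ is well-defined on $Hameo(D^2,\del D^2)$; the homomorphism property for $\overline{\Cal}$ then descends from that of $\overline{\Cal}^{path}$.

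Finally, for the restriction statement, I would observe that when $\phi \in Ham(D^2,\del D^2)$, any smooth Hamiltonian path $\phi_H$ with $\phi_H^1 = \phi$ is its own approximating sequence, so $\overline{\Cal}^{path}(\phi_H) = \Cal^{path}(\phi_H)$ coincides with the integral $\int_0^1\!\int_{D^2} H\,\Omega\,dt$, which classically (Banyaga) descends to $\Cal(\phi)$ of Definition \ref{defn:Cal-defn}. The main obstacle in this entire scheme is of course Conjecture \ref{conj:signchange} itself, which supplies the vanishing of the limit basic phase function; modulo this input, the argument above is essentially formal and combines only Theorem \ref{thm:alex}, Theorem \ref{thm:average=Cal}, and the homomorphism property of $\overline{\Cal}^{path}$.
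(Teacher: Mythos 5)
Your proposal is correct and follows essentially the same route as the paper's proof: reduce well-definedness to the vanishing of $\overline{\Cal}^{path}$ on identity-based loops via the homomorphism property, extend the loop to an engulfed loop on $S^2$, invoke the Alexander isotopy of Theorem \ref{thm:alexander} to furnish the hameotopy demanded by Conjecture \ref{conj:signchange}, and then combine the identification of the limit basic phase function with the (normalized) Calabi invariant with the conjectural vanishing to conclude $\overline{\Cal}^{path}(\lambda) = 0$. The only cosmetic difference is that you cite the loop statement (Theorem \ref{thm:average=Cal}) where the paper cites the path version (Theorem \ref{thm:disccase}); both deliver the needed equality $f_{\underline{\mathbb F}} = \overline{\Cal}(F)/\vol_\omega(S^2)$ on $D^2_-$, so the argument is unaffected.
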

\begin{proof} Let $\phi \in Hameo(D^2,\del D^2)$. We will show that
for any topological Hamiltonian paths $\lambda, \, \lambda'$ with
$\lambda(1) = \lambda'(1) = \phi$,
$\overline{\Cal}^{path}(\lambda) = \overline{\Cal}^{path}(\lambda')$
By the homomorphism property, it is enough to prove
$\overline{\Cal}^{path}(\lambda^{-1}\lambda') = 0$. But we have
$\lambda^{-1}(0)\lambda'(0) = \lambda^{-1}(1)\lambda'(1)) = id$, i.e,
the path $\lambda^{-1}\lambda'$ defines a topological Hamiltonian loop based at the
identity. Therefore Conjecture \ref{conj:signchange} and Theorem \ref{thm:disccase} imply
$\overline{\Cal}^{path}(\lambda^{-1}\lambda') = 0$.

Then the theorem follows by defining $\overline{\Cal}: Hameo(D^2,\del D^2) \to \R$ to be
$$
\overline{\Cal}(\phi) = \overline{\Cal}^{path}(\lambda)
$$
for a (and so any) topological Hamiltonian path $\lambda$ with $\lambda(1) = \phi$.
\end{proof}

Therefore we have proved

\emph{So the main remaining task is to prove Conjecture \ref{conj:signchange} which will
prove all the conjectures stated in the present paper.}
In the next section, we will prove the conjecture for the \emph{weakly graphical} topological Hamiltonian loop
on the disc.

\part{Weakly graphical topological Hamiltonian loops on $D^2$}

\section{Geometry of graphical symplectic diffeomorphisms in 2-dimension}

We start with the following definition in general dimension.

\begin{defn} Let $\Psi: U_\Delta \to \CV$ be a Darboux-Weinstein chart of the diagonal $\Delta \subset M \times M$
and $\pi_\Delta: U_\Delta \to \Delta$ the associated projection.
\begin{enumerate}
\item We call an engulfed symplectic diffeomorphism $\phi: M \to M$ \emph{$\Psi$-graphical} if the projection
$\pi_\Delta$ is one-one, and an engulfed symplectic isotopy $\{\phi^t\}$ \emph{$\Psi$-graphical} if
each element $\phi^t$ $\Psi$-graphical. We call a Hamiltonian $F=F(t,x)$ $\Psi$-graphical if
its associated Hamiltonian isotopy $\phi_F^t$ $\Psi$-graphical.
\item We call a topological Hamiltonian loop $F$ is strongly (resp. weakly) $\Psi$-graphical, if it allows an
approximating sequence $F_i$ each element of which is $\Psi$-graphical (resp. whose time-one map $\phi_{F_i}^1$
is $\Psi$-graphical).
\end{enumerate}
\end{defn}
Denote by $F^a$ the time-dependent Hamiltonian generating the
path $t \mapsto \phi_F^{at}$. The statement (2) of this definition is
equivalent to saying that each $F^a$ is $\Psi$-graphical for $a \in [0,1]$.
We remark that any symplectic diffeomorphisms sufficiently $C^1$-close to the identity
is graphical, but not every $C^0$-close one.

In the rest of this section, we restrict ourselves to the two dimensional case.

We identify $U_y \times U_y \hookrightarrow T^*\Delta$ by the explicit linear coordinate changes
\be\label{eq:qptox}
{\bf q}_1 = \frac{q+Q}{2}, \, {\bf q}_2 = \frac{p+P}{2}, \, {\bf p}_2 = q-Q,\, {\bf p}_1 = P-p
\ee
where $(Q,P) = (Q,P) \circ \pi_1$ and $(q,p) = (Q,P) \circ \pi_2$ in this Darboux-Weinstein chart.
(We note that this chart can be chosen globally on $D^2$.)
Then we have
\bea\label{eq:QPqp}
Q & = & {\bf q}_1 - \frac{{\bf p}_2}{2}, \quad q = {\bf q}_1 + \frac{{\bf p}_2}{2} \nonumber\\
P & = & {\bf q}_2 + \frac{{\bf p}_1}{2}, \quad p = {\bf q}_2 - \frac{{\bf p}_1}{2}.
\eea
In short, we write
$$
x=(Q,P) = {\bf q} + \frac{1}{2} j{\bf p}, \quad y=(q,p) = {\bf q} - \frac{1}{2}j{\bf p}
$$
where $j: \R^2_{\bf p} \times \R^2_{\bf p}$ is the linear map given by
$j({\bf p}_1,{\bf p}_2) = (-{\bf p}_2, {\bf p}_1)$.

In dimension 2, we prove the following interesting phenomenon. Although we have not checked it,
it is unlikely that similar phenomenon occurs in higher dimensions. This theorem
has its own interest. The theorem itself will not be used in the proofs of main results of the present paper
except that the same kind of the proof will be used later in the proof of Proposition
\ref{prop:onetoonekappa}.

\begin{thm}\label{thm:onetoone} Suppose $\phi: M \to M$ is a $\Psi$-graphical symplectic diffeomorphism
and let $\Graph \phi = \Image \alpha_\phi$ for a closed one-form
$\alpha_\phi$ on $\Delta$. Then for any $0 \leq r \leq 1$,
the projection $\pi_2: M \times M \to M$ restricts to a one-one map
to $ \Image r\, \alpha_\phi \subset M \times M$. In particular
\be\label{eq:tdfphit}
\Image r\, \alpha_\phi = \Graph \phi_r
\ee
for some symplectic diffeomorphism $\phi_r: M \to M$ for each $ 0 \leq r \leq 1$.
\end{thm}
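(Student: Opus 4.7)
The plan is to compute the Jacobian of $\pi_2 \circ (r\alpha_\phi)$ explicitly in the linear Darboux--Weinstein chart, observe that it is a polynomial in $r^2$ with constant term $1$, and then deduce, via an endpoint-monotonicity argument combined with a covering-degree argument, that $\pi_2 \circ (r\alpha_\phi)$ is a diffeomorphism for every $r \in [0,1]$. First I would write $\alpha_\phi = f\,d{\bf q}_1 + g\,d{\bf q}_2$, where closedness gives $\partial_2 f = \partial_1 g$. The formulas \eqref{eq:qptox}--\eqref{eq:QPqp} then yield
$$
\pi_1 \circ (r\alpha_\phi)({\bf q}) = {\bf q} + \tfrac{r}{2}\,j\alpha_\phi({\bf q}), \qquad \pi_2 \circ (r\alpha_\phi)({\bf q}) = {\bf q} - \tfrac{r}{2}\,j\alpha_\phi({\bf q}),
$$
with $j({\bf p}_1,{\bf p}_2) = (-{\bf p}_2,{\bf p}_1)$, and a direct $2 \times 2$ determinant computation, in which closedness of $\alpha_\phi$ precisely cancels the terms of order $r$, produces the clean identity
$$
J_r({\bf q}) \;:=\; \det D\bigl(\pi_2 \circ (r\alpha_\phi)\bigr)({\bf q}) \;=\; 1 + \tfrac{r^2}{4}\det(D\alpha_\phi)({\bf q}).
$$
The analogous computation for $\pi_1$ returns the same expression by the $r \leftrightarrow -r$ symmetry of the two projection formulas.

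Next, since $J_r({\bf q})$ depends on $r$ only through $r^2$, the function $r \mapsto J_r({\bf q})$ is monotone on $[0,1]$ at each fixed ${\bf q}$, so its minimum over $[0,1]$ is attained at an endpoint. At $r=0$ one has $J_0 \equiv 1$; at $r=1$ the graphical hypothesis makes $\pi_2 \circ \alpha_\phi = (\pi_2|_{\Graph \phi}) \circ \alpha_\phi$ a diffeomorphism, so $J_1$ is nowhere zero, and the $\Psi$-graphical isotopy from the identity pins its sign to be positive: along the isotopy the corresponding Jacobian depends continuously on the parameter, equals $1$ at $\phi = \mathrm{id}$, and cannot vanish without destroying the graphical condition at some intermediate time. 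Consequently $J_r > 0$ on all of $[0,1] \times \Delta$, so $\pi_2 \circ (r\alpha_\phi) \colon \Delta \to M$ is a local diffeomorphism for every $r$.

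Finally, since $\Delta$ is compact and $M$ has the same dimension, each such local diffeomorphism is a covering map, and the family $r \mapsto \pi_2 \circ (r\alpha_\phi)$ is a homotopy through local diffeomorphisms connecting the identity (at $r=0$) to the diffeomorphism $\pi_2 \circ \alpha_\phi$ (at $r=1$); the covering degree is therefore constantly $1$, and each $\pi_2 \circ (r\alpha_\phi)$ is a global diffeomorphism. Defining $\phi_r := (\pi_1 \circ (r\alpha_\phi)) \circ (\pi_2 \circ (r\alpha_\phi))^{-1}$ then yields \eqref{eq:tdfphit}, and $\phi_r$ is symplectic because $\Image (r\alpha_\phi)$ is Lagrangian in $(M \times M, \omega \oplus (-\omega))$ as the image of a closed section of $T^*\Delta$. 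The main obstacle---and what makes the result genuinely two-dimensional---is the cancellation producing the identity $J_r = 1 + \tfrac{r^2}{4}\det(D\alpha_\phi)$: closedness of $\alpha_\phi$ kills every $O(r)$ term and leaves a monotone function of $r^2$, a cancellation that fails in higher dimensions where mixed-order terms persist and the endpoint-monotonicity step breaks down. A secondary technical point is that the linear formulas \eqref{eq:qptox}--\eqref{eq:QPqp} must be globally available in the chosen Darboux--Weinstein chart; this holds on $D^2$ as noted in the paper and can be arranged on $S^2$ via the geodesic-flow chart \eqref{eq:PhiD1}.
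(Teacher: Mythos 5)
Your proof is correct and follows essentially the same route as the paper's: compute the Jacobian of $\psi_r = \pi_2 \circ (r\alpha_\phi) = \mathrm{id} - \tfrac{r}{2}j\alpha_\phi$, observe that closedness of $\alpha_\phi$ (equivalently, symmetry of $\nabla\alpha_\phi$) makes $\operatorname{tr}(j\,D\alpha_\phi)=0$ so the $O(r)$ term vanishes and $J_r = 1 + \tfrac{r^2}{4}\det(D\alpha_\phi)$, then use positivity at $r=0$ and $r=1$ together with monotonicity in $r^2$ to force $J_r>0$, and finish with a degree/covering argument. The paper packages the determinant computation as a separate lemma about symmetric $2\times 2$ matrices and the star-shapedness of $\{B \in \mathfrak{sp}(2): \det(I-B)>0\}$, but the underlying calculation and logic are the same as yours.
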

\begin{proof} We have only to prove the map
\be\label{eq:composition}
{\bf q} \mapsto {\bf q} - \frac{r}{2} j \alpha_\phi({\bf q})
\ee
is one-one. This is because it is the composition of the maps
$$
\Delta \to \Image \alpha_\phi; \quad {\bf q} \mapsto ({\bf q}, r\, \alpha_\phi({\bf q}))
$$
and the projection $\pi_2: \Image r\, \alpha_\phi \to M$ where the first
map is a bijective map. Denote this map by $\psi_r$.

Since the map $\psi_r$ has degree 1, it will be enough to prove
that it is an immersion since the latter will imply that the map must be a
covering projection. Therefore we need to prove that the derivative
$$
d\psi({\bf q}) = I - \frac{r}{2} j\, \nabla\alpha_\phi({\bf q})
$$
is invertible for all ${\bf q}$ and $ 0 \leq r \leq 1$. Here $\nabla\alpha_\phi$
is the covariant derivative of the one-form $\alpha_\phi$ with respect to
the flat affine connection $\nabla$.
We regard it as a section of $Hom(T\Delta, T^*\Delta)$, i.e., a bundle map
$$
\nabla\alpha_\phi: T\Delta \to T^*\Delta.
$$
\begin{lem} At each point ${\bf q} \in \Delta$, the linear map
$$
\nabla : v \mapsto \nabla_v \alpha_\phi
$$
is a symmetric operator, i.e., it satisfies
\be\label{eq:symmetry}
\langle \nabla_v \alpha_\phi, w \rangle = \langle \nabla_w \alpha_\phi, v \rangle
\ee
for all $v, \, w  \in T_{\bf q}\Delta$ at any ${\bf q} \in \Delta$.
\end{lem}
\begin{proof} This immediately follows from the fact that any
closed one-form can be locally written as $\alpha_\alpha = df_\phi$
for some function on $\Delta$. Then $\nabla\alpha_\phi = D^2 f_\phi$
which is the Hessian of the function $f_\phi$ which is obviously symmetric.
\end{proof}

We first prove the following general result on the set the set of $2\times 2$ symplectic matrices.

\begin{lem}\label{lem:starshape} Let $A$ be a $2 \times 2$ symmetric matrix.
Then
\be\label{eq:id+A}
\det(I - r j \, A) > 0
\ee
for all $r \in [0,1]$, provided it holds at $r =1$, i.e., provided
$$
\det(I - j \, A) > 0.
$$
The same holds for the opposite inequality.
\end{lem}
\begin{proof} Denote $A = \left(\begin{matrix} a & c \\
c & b \end{matrix}\right)$. Then straightforward computation shows
$$
jA = \left(\begin{matrix} 0 & -1 \\
1 & 0 \end{matrix}\right) \left(\begin{matrix} a & c \\
c & b \end{matrix}\right) = \left(\begin{matrix} -c & -b \\
a & c \end{matrix}\right).
$$
In particular $\tr(j\, A) = 0$ and hence
$$
\det(I - j \, A) = 1 + \det (j\, A) = 1 + (ab - c^2).
$$
Therefore $\det(I + j \, A) > 0$ is equivalent to
$$
1 + (ab - c^2) > 0.
$$
For $r = 0$, $I + r j \, A = I$ and so the inequality obviously holds.
On the other hand, if $r \in (0, 1]$, we derive
$$
1 + r^2 (ab - c^2) \geq r^2(1 + (ab - c^2)) > 0
$$
which finishes the proof.
\end{proof}

\begin{rem} Note that if $A$ is symmetric, then $jA \in sp(2)$ the Lie algebra of
the symplectic group $Sp(2)$. Then the set  $\{B \in sp(2) \mid \det (I - B) = 0\}$
is given by the equation
$$
1 + (ab - c^2) = 0; \quad B = \left(\begin{matrix} c & b \\
-a & - c \end{matrix}\right)
$$
which defines a hypersurface in $sp(2)$. If we denote
$$
sp_\pm(2) = \{B \in sp(2) \mid \pm \det(I - B) > 0\}
$$
what this lemma shows that each component thereof is star-shaped centered at $I$.
\end{rem}

By the hypothesis, it follows that $\psi=\psi_1$ is an orientation preserving diffeomorphism and so
$\det d\psi({\bf q}) > 0$. We now compute
$$
\det d\psi({\bf q}) = \det \left(I - \frac{1}{2} \nabla \alpha_\phi({\bf q})\right)
$$
and
$$
d\psi_r({\bf q}) =  I - \frac{r}{2} \nabla \alpha_\phi({\bf q}).
$$
By Lemma \ref{lem:starshape}, we derive $d\psi_r({\bf q}) > 0$ and so $\psi_r$ is immersed
for all $r$. This finishes the proof of Theorem \ref{thm:onetoone}.
\end{proof}

\begin{rem}
A similar proof also gives rise to the following theorem with $v$ and $r$
replaced, whose proof will be omitted since it will not be used in the
present paper: Suppose $\phi: M \to M$ is a $\Psi$-graphical symplectic diffeomorphism.
Consider the family of maps $\phi_v: M \to M$ defined by
$\phi_v(y) = y + v(\phi(y) - y)$ for $v \in [0,1]$.
Then $\phi_v$ is also $\Psi$-graphical for all $v$, i.e., we can express
$$
\Graph \phi_v = \Image \alpha_v
$$
for some one-form $\alpha_v$ on $\Delta$ for all $v$.
\end{rem}

\section{Weakly graphical Hamiltonian diffeomorphism and Alexander isotopy}
\label{sec:weak-alexander}

The following proposition reflects some special characteristic of Alexander isotopy
relative to the general hameotopy.

\begin{prop}\label{prop:onetoonekappa} Suppose that $\phi_{F_i}^1$ is $\Psi$-graphical. Then $\phi_{F_i,a}^1$
defined as in Lemma \ref{lem:Callambdaa} is also $\Psi$-graphical for all $0 \leq a \leq 1$.
\end{prop}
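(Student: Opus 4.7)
The plan is to verify injectivity of the projection $\pi_\Delta\big|_{\Graph \phi_{F_i,a}^1}$ directly, using the explicit Alexander formula of Lemma \ref{lem:Callambdaa} together with the hypothesis on $\phi_{F_i}^1$. Following the parenthetical remark just after (7.2), on $D^2$ one may choose a global Euclidean Darboux--Weinstein chart in which the projection takes the simple midpoint form $\pi_\Delta(x,y) = \frac{x+y}{2}$, and I will work throughout in such a chart.

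Set $\Pi(y) := \pi_\Delta(\phi_{F_i}^1(y), y) = \frac{\phi_{F_i}^1(y)+y}{2}$. By hypothesis $\Pi$ is injective, and since $\phi_{F_i}^1$ is isotopic to the identity via a $\Psi$-engulfed isotopy, the remark in the introduction (recalled again just before Theorem \ref{thm:onetoone}) upgrades this to the statement that $\Pi$ is a diffeomorphism of $D^2$ onto itself. Because $\supp \phi_{F_i}^1 \subset D^2(1-\eta)$, $\Pi$ coincides with the identity on the annulus $1-\eta < |y| \leq 1$, and combined with its bijectivity this forces $\Pi$ to restrict to a bijection $D^2(1-\eta) \to D^2(1-\eta)$. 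From the explicit formula $\phi_{F_i,a}^1(z) = a\,\phi_{F_i}^1(z/a)$ for $|z| \leq a(1-\eta)$ and $\phi_{F_i,a}^1(z) = z$ otherwise, one then computes on the inner disc
\[
\pi_\Delta(\phi_{F_i,a}^1(z), z) \;=\; \frac{a\phi_{F_i}^1(z/a) + z}{2} \;=\; a\,\Pi(z/a),
\]
while on the outer annulus $\pi_\Delta(\phi_{F_i,a}^1(z), z) = z$.

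The first formula defines an injection of $\{|z| \leq a(1-\eta)\}$ onto $a\cdot\Pi(D^2(1-\eta)) = D^2(a(1-\eta))$, while the second is simply the identity on the complementary annulus. As the two images are disjoint and together exhaust $D^2$, the combined map is a bijection of $D^2$, proving that $\phi_{F_i,a}^1$ is $\Psi$-graphical. I do not expect a serious obstacle here: the whole argument hinges on the clean compatibility $\Pi_a(z) = a\,\Pi(z/a)$ between the Euclidean Alexander rescaling and the midpoint form of $\pi_\Delta$, together with the upgrade of injectivity to bijectivity of $\Pi$ provided by engulfedness. The only point requiring mild care is choosing the Darboux--Weinstein chart on $D^2$ to be Euclidean, which is exactly what the parenthetical remark after (7.2) guarantees; had one worked instead in an extrinsic chart (say from geodesic flow on $S^2$), the compatibility would only be approximate and a more delicate argument, perhaps in the spirit of the star-shape Lemma used in the proof of Theorem \ref{thm:onetoone}, would be required.
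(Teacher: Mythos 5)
Your proof is correct, and it takes a slightly different route from the paper. Both arguments hinge on the same dilation identity
\[
\kappa_a(z) = \tfrac{1}{2}\bigl(z + \phi_{F_i,a}^1(z)\bigr) = a\,\kappa_1(z/a) \quad\text{for } |z|\le a(1-\eta),
\]
written in the global linear chart; the difference is at what level the identity is used. The paper differentiates it to obtain $d\kappa_a(y)=d\kappa_1(y/a)$, uses compactness of $S^2$ to bound $\det d\kappa_1$ uniformly away from zero, and concludes that $\kappa_a$ is an immersion of degree one, hence a covering map and therefore a bijection. You instead use the identity at the level of sets: since $\kappa_1=\Pi$ restricts to a bijection of $D^2(1-\eta)$ onto itself (as it is injective, continuous, and fixes the boundary annulus), the scaled map $z\mapsto a\,\Pi(z/a)$ is a bijection of $D^2(a(1-\eta))$ onto itself, and the identity on the complementary annulus takes care of the rest; the two pieces have disjoint images exhausting the disc. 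Your version avoids the immersion/covering-map machinery and the uniform Jacobian bound entirely, requiring only injectivity of $\Pi$ together with the boundary-fixing observation (one does not even need the diffeomorphism upgrade you invoke: a continuous injection of the closed disc into itself that fixes the boundary circle is automatically surjective, by degree or invariance of domain). The paper's argument, by working at the Jacobian level, is what makes the ``star-shape'' Lemma \ref{lem:starshape} relevant in the proof of Theorem \ref{thm:onetoone}; your set-theoretic version sidesteps that and is a bit more elementary for this particular proposition.
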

\begin{proof} The proof of this proposition is similar to that of Theorem \ref{thm:onetoone}
in its spirit but is much simpler than it. It is enough to prove the map
$\kappa_a = \pi_\Delta \circ (\pi_2^{\Delta})^{-1}: S^2 \to S^2$ is one-one since
the map $(\pi_2^{\Delta})^{-1}: S^2 \to \Graph \phi_{F_i,a}^1$ is bijective.
But the map $\kappa_a$ is given by
$$
\kappa_a(y) = \frac{1}{2}(y +\phi_{F_i,a}^1(y))
$$
in the affine chart.  A straightforward computation shows
$$
d\kappa_a(y) =
\begin{cases}\frac{1}{2}(Id + d\phi_{F_i}^1(\frac{y}{a})) \quad & \text{for $y$ with } |y| \leq a(1-\eta) \\
Id \quad & \text{otherwise}
\end{cases}
$$
Since $\kappa_1$ is an orientation-preserving diffeomorphism and $S^2$ is compact, there exists $\delta> 0$ such that
$$
\det(d\kappa_1(y)) > \delta > 0
$$
for all $y \in S^2$.
From the expression of $d\kappa_a(y)$, it follows $d\kappa_a(y) = d\kappa_1(\frac{y}{a})$ and hence
\be\label{eq:dkappaay}
\det(d\kappa_a(y)) = \det\left(d\kappa_1(\frac{y}{a})\right) > \delta > 0
\ee
for all $a \in [0,1]$ and $y \in S^2$. This implies $\kappa_a: S^2 \to S^2$ is an immersion and
so a covering map of degree 1. Therefore it must be a one-one map.
\end{proof}

We now consider 3-chain $\Xi_i$ parameterized by the map
$$
\Xi_i: [0,1] \times M \to T^*\Delta
$$
defined by
\be\label{eq:Xii}
\Xi_i(a,x) = \begin{cases} a\, \left(\phi_i(x), x\right) \quad & x \in D^2_+ \\
(x,x) \quad & x \in S^2 \setminus D^2_+
\end{cases}
\ee
We note that this chain defines the same chain as the trace $\Tr_\CG(\phi_i)$ of
the Alexander isotopy of the 2-disc extended to $S^2$ by the identity: it is by definition parameterized by
$$
\Tr_\CG(\phi_i)(a,y) = \begin{cases} \left(a \phi_i(\frac{y}{a}), y \right) \quad & |y| \leq a \leq 1 \\
(y,y) \quad & y \in S^2 \setminus D^2_+(a)
\end{cases}
$$
\begin{rem}\label{rem:upshot} The way \eqref{eq:Xii} of parameterizing
the trace of the Alexander isotopy as a Lagrangian chain associated,
\emph{which is smooth everywhere including $a = 0$},
is crucial for us to establish the $C^0$-convergence property of $a$-Hamiltonian
of the Alexander isotopy in the context of graphical case. See
Corollary \ref{cor:Giconti} later in the next section. This aspect of
Alexander isotopy seems to be something new which,  as far as we know, has not been
utilized in the literature before.
\end{rem}

We will fix $i$ and just write $\phi = \phi_i$ until we need to vary $i$.
Using the coordinate change \eqref{eq:qptox}, we have the equalities
\be\label{eq:qapa}
{\bf q}^a = a\, \frac{\phi(x) + x}{2}, \, {\bf p}^a = -a\, j(\phi(x) - x)
\ee
on $D^2_+$ which extend to the $S^2 \setminus D^2_+$ by the identity map.
If we parameterize $\Graph \phi$ by ${\bf q}(1,x) = \frac{\phi(x) + x}{2}$
and ${\bf p}(1,x) =  - j(\phi(x) - x)$ (using the canonical identification of
$S^2 \cong \Delta_{S^2}$), then for each given $0 \leq a \leq 1$,
we may use Proposition \ref{prop:onetoonekappa} to parameterize
the chain $\Xi(a, \cdot)$
$$
{\bf p}(a,x) = d_{\bf q}{\bf g}(a,{\bf q}(a,x)), \, {\bf q}(a,x) = a {\bf q}(1,x)
$$
for $(a,x) \in [0,1] \times S^2$ for some continuous function ${\bf g} = {\bf g}(a,{\bf q})$
which is smooth on $(0,1] \times S^2$. More precisely, if we denote $g_a({\bf q}) = {\bf g}(a,{\bf q})$,
the second equation becomes
$$
{\bf p}_1(a,x) = \frac{\del g_a}{\del {\bf q}_1}({\bf q}(a,x)),
\, {\bf p}_2(a,x) = \frac{\del g_a}{\del {\bf q}_2}({\bf q}(a,x))
$$
on $D^2_+$. Substituting ${\bf q}(a,x) = a {\bf q}(1,x)$ thereinto, we obtain
$$
{\bf p}_1(a,x) = \frac{\del g_a}{\del {\bf q}_1}(a {\bf q}(1,x)),
\, {\bf p}_2(a,x) = \frac{\del g_a}{\del {\bf q}_2}(a {\bf q}(1,x)).
$$
Substituting the last three relations into the second equation of \eqref{eq:qapa}, we obtain
\be\label{eq:dgadqj}
\frac{\del g_a}{\del {\bf q}_j}(a {\bf q}(1,x)) = a \frac{\del g_1}{\del {\bf q}_j}({\bf q}(1,x)),
\quad j = 1, \, 2.
\ee
If we define a function $\widetilde g_a$ by $\widetilde g_a({\bf q}) = g_a(a\, {\bf q})$, then $\widetilde g_1 = g_1$ and
\be\label{eq:deltildega}
\frac{\del \widetilde g_a}{\del {\bf q}_j}({\bf q}) = a \frac{\del g_a}{\del {\bf q}_j}(a\, {\bf q}),
\quad j = 1, \, 2.
\ee
By the graphicality of $\phi$, it follows that the map
$$
x \mapsto {\bf q}(1,x) = \left(\frac{\phi(x) + x}{2}, \frac{\phi(x) + x}{2}\right)
$$
is a bijective map. Therefore by setting ${\bf q} = {\bf q}(1,x)$ and varying $x$,
we derive, from \eqref{eq:dgadqj} and \eqref{eq:deltildega},
$$
\frac{\del \widetilde g_a}{\del {\bf q}_j}({\bf q}) = a^2 \frac{\del \widetilde g_1}{\del {\bf q}_j}({\bf q}),
\quad j = 1, \, 2
$$
for all ${\bf q} \in D^2_+$.
This, combined with vanishing of $\widetilde g_a$ and $\widetilde g_1$ outside
$D^2_+$, implies
$$
\widetilde g_a = a^2 \widetilde g_1, \quad \text{ i.e,}\quad g_a(a \cdot) = a^2 g_1.
$$
\begin{prop}\label{prop:dgada} Let $g_a: \Delta_{S^2} \to \R$ be the function such that
$g_a \equiv 0$ outside $\Delta^{D^2_+}$ and $\Image dg_a$ represents
the 2-chain $\Xi(a,\cdot)$.  Denote ${\bf g}(a,{\bf q}) = g_a({\bf q})$. Then
\be\label{eq:dgada}
\frac{\del {\bf g}}{\del a}(a,{\bf q}) =
2a g_1\left(\frac{{\bf q}}{a}\right) -\frac{1}{a} dg_a ({\bf q})\cdot {\bf q}
\ee
for all ${\bf q} \in \Delta_{S^2}$ and for $0 < a \leq 1$.
\end{prop}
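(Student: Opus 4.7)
The plan is to derive the formula by direct differentiation of the scaling identity $g_a({\bf q}) = a^2 g_1({\bf q}/a)$ (equivalently $g_a(a\,\cdot) = a^2 g_1$), which was already established in the paragraph immediately preceding the proposition out of the graphicality hypothesis on $\phi_i$ and Proposition \ref{prop:onetoonekappa}. This identity encodes the full structure of the Alexander-isotopy $2$-chain $\Xi(a,\cdot)$, so once it is in hand the proposition reduces to a straightforward calculus exercise requiring two differentiations of the same identity.

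First I would differentiate in $a$, using the product rule on the $a^2$ factor and the chain rule on $g_1({\bf q}/a)$. The product rule yields the term $2a g_1({\bf q}/a)$, which matches the first summand of \eqref{eq:dgada}, while the chain rule yields $a^2 \langle dg_1({\bf q}/a),-{\bf q}/a^2\rangle = -\langle dg_1({\bf q}/a),{\bf q}\rangle$. This is almost the second summand of \eqref{eq:dgada} but is written in terms of $dg_1$ at the rescaled point rather than $dg_a$ at ${\bf q}$. To bridge this I would differentiate the scaling identity in the ${\bf q}$-variable: applying $d$ to $g_a({\bf q}) = a^2 g_1({\bf q}/a)$ gives $dg_a({\bf q}) = a\,dg_1({\bf q}/a)$, and hence $dg_1({\bf q}/a) = \tfrac{1}{a} dg_a({\bf q})$. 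Substituting produces exactly \eqref{eq:dgada} on the support side; on the complement of $a\cdot\supp g_1$ both $g_a$ and its partials vanish and the identity degenerates to $0=0$, so the formula extends globally over $\Delta_{S^2}$.

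The main obstacle here is not the differentiation itself but rather the setting up of the scaling identity in the first place, and that step --- which rests on Proposition \ref{prop:onetoonekappa}, the graphicality of $\phi_i$, and the bijectivity of $x\mapsto {\bf q}(1,x)$ on $D^2_+$ --- is already handled upstream of the proposition. Given that input, no Floer-theoretic, symplectic-rigidity, or $L^{(1,\infty)}$-type input is needed for Proposition \ref{prop:dgada} itself; the proof is a purely elementary scaling computation, and I expect the write-up to be only a few lines longer than the sketch above.
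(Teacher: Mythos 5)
Your proposal is correct and is essentially the same elementary scaling calculation as the paper's own proof; the only cosmetic difference is that you differentiate the identity in the form $g_a({\bf q})=a^2 g_1({\bf q}/a)$ and separately deduce $dg_1({\bf q}/a)=\tfrac{1}{a}dg_a({\bf q})$ by spatial differentiation, whereas the paper differentiates the equivalent form ${\bf g}(a,a{\bf q})=a^2 g_1({\bf q})$ in $a$ and then substitutes $\widetilde{\bf q}=a{\bf q}$, which produces the $dg_a$ term directly from the chain rule without a second differentiation. Both routes yield \eqref{eq:dgada} by the same reasoning, so no further comment is needed.
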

\begin{proof} We rewrite the identity $g_a(a{\bf q}) = a^2 g_1({\bf q})$ on $\Delta_{D^2_+}$
as ${\bf g}(a,a {\bf q}) = a^2 g_1({\bf q})$ on $D^2_+$.
By differentiating the latter identity with respect to $a$,  we obtain
$$
\frac{\del {\bf g}}{\del a}(a,a{\bf q}) + dg_a (a{\bf q})\cdot {\bf q} = 2a g_1({\bf q}).
$$
By setting $\widetilde{\bf q} = a{\bf q}$ for ${\bf q} \in \Delta_{D^2_+}$, we can rewrite
the equation as
$$
\frac{\del {\bf g}}{\del a}(a,\widetilde {\bf q})
+ \frac{1}{a} dg_a (\widetilde {\bf q})\cdot \widetilde {\bf q} = 2a g_1\left(\frac{\widetilde {\bf q}}{a}\right).
$$
By rewriting the variable $\widetilde {\bf q}$ by ${\bf q}$,
this proves  the equality \eqref{eq:dgada} on $\Delta_{D^2_+}$. It also
obviously holds on $\Delta_{S^2} \setminus \Delta_{D^2_+}$ since both sides vanish.
This finishes the proof.
\end{proof}

The upshot of this proposition is that the right hand side of \eqref{eq:dgada}
changes continuously with respect to the $C^0$-topology for the set of graphical $\phi$'s.
This will play a fundamental role in the proof of Theorem \ref{thm:weak-graphical}
later in the next section.

\begin{rem}
Strictly speaking, we should use the modified Alexander isotopy as given in
the proof of Theorem \ref{thm:alexander}.
We fix a sequence of weakly $\Psi$-graphical approximating sequence $F_i$ and
its Alexander isotopy $\Lambda_i = \Lambda_{i,\e_i}$ defined as in \eqref{eq:Lambdaiei}.
We also denote by $K_i = K_i(a,t,x)$ the unique associated $a$-Hamiltonian
supported in $\Int D^2$ chosen as in Lemma \ref{lem:choiceK}, and denote $G_i(a,x) = K_i(a,1,x)$.
Then we obtain $\Cal(G_i^a) = (\chi_i(a)^4 - \e_i^4) \Cal(G_i)$ from Lemma \ref{lem:Callambdaa}.
As $i \to \infty$, the right hand side converges to
$a^4 \overline{\Cal}(F)$ with $F = \lim_{i \to \infty} F_i$ since $\Cal(G_i)
= \Cal(F_i)$ from Lemma \ref{lem:Callambdaa}.
\end{rem}

Having said this remark, we will just use the standard Alexander isotopy given in Lemma
\ref{lem:Callambdaa} ignoring the fact that it is not smooth at $a = 0$.
All of our arguments can be justified using the above modified Alexander isotopy
and taking the limit.

\section{Vanishing of basic phase function for the graphical case on $D^2$}
\label{sec:vanishing}

Now we restrict to the context of Theorem \ref{thm:weak-graphical}.
Let $F$ be a topological Hamiltonian
generating a topological Hamiltonian loop $\phi_F$ on the
2-disc $D^2$ with $\supp F \subset \Int D^2$.
We consider an approximating sequence $H_i$ and $F_i = H_i(1)$
with $\supp F_i \subset \Int D^2$. We embed $D^2$ into $S^2$ as the upper
hemisphere and then extend $F_i$ canonically to whole $S^2$ by zero, and
consider the graphs $\Graph \phi_{F_i}^1$ in $S^2 \times S^2$.
Note $\supp \phi_{\mathbb F_i} \subset D^2_+ \times D^2_+$ and hence
$$
\Graph \phi_{F_i}^t \bigcap \Delta \supset \Delta_{D^2_-} \bigcup \Delta_{D^2_+ \setminus D^2_+(1-\delta)}
$$
for some $\delta > 0$ for all $t \in [0,1]$ independent of sufficiently large $i$'s
depending only on $F$, \emph{provided $\overline d(\phi_F^1,id) \leq \frac{\delta}{2}$}.
We fix the given topological Hamiltonian loop $\phi_F$ and fix such $\delta > 0$.

Then we consider the normalization $\underline{F_i}$ of $F_i$ on $S^2$ and define Hamiltonian
$$
\underline{\mathbb F_i} (t,{\bf x}): = \chi({\bf x})\, \underline{F_i}(t,x), \quad {\bf x} = (x,y)
$$
on $T^*\Delta$ with a slight abuse of notation for $\underline{\mathbb F_i}$.

\begin{thm}\label{thm:graphicalloop}
Conjecture \ref{conj:fvanishing} holds for any weakly graphical topological
Hamiltonian loop on $S^2$ arising as above.
\end{thm}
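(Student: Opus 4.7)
I would prove this by exhibiting the Alexander isotopy of Lemma \ref{lem:Callambdaa} as an explicit hameotopy contracting the weakly graphical loop $\phi_F$ and running a Floer-theoretic continuation argument along it, using the graphical control of the $a$-Hamiltonian to propagate the vanishing of the basic phase function from the trivial end at $a = 0$ to the given loop at $a = 1$.

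First, given an approximating sequence $F_i$ with each $\phi_{F_i}^1$ being $\Psi$-graphical, I would invoke Proposition \ref{prop:onetoonekappa} to conclude that the Alexander contractions $\phi_{F_i, a}^1$ remain $\Psi$-graphical for every $a \in [0,1]$. This lets me represent $\Graph \phi_{F_i, a}^1 = \Image(dg_a^i)$ for a family $g_a^i: \Delta_{S^2} \to \R$ of generating functions vanishing off $D^2_+(a)$ and satisfying the scaling law $g_a^i(a\mathbf{q}) = a^2 g_1^i(\mathbf{q})$ derived just before Proposition \ref{prop:dgada}. I then form the $a$-Hamiltonian $K_i(a,t,x)$ of the Alexander hameotopy $\Lambda_i$ built in the proof of Theorem \ref{thm:alexander}, together with its time-$1$ slice $G_i(a,x) = K_i(a,1,x)$.

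Second, I would extract the crucial $C^0$-continuity of $G_i$ at $a = 0$. The explicit formula in Proposition \ref{prop:dgada} for $\partial_a \mathbf{g}$, involving only $g_1$ and the contraction $dg_a({\bf q})\cdot {\bf q}$, is uniformly bounded in $a \in [0,1]$ and extends continuously to $a = 0$ with value $0$ — sharply different from the generic behavior described in Remark \ref{rem:sHamiltonian}. Translating through the standard relation between generating functions and $a$-Hamiltonians, this yields the graphical continuity of $G_i$ at $a = 0$ announced in Remark \ref{rem:upshot} and formalized in the forthcoming Corollary \ref{cor:Giconti}.

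Third, I would compare two descriptions of $f_{\underline{\mathbb F^a}}$. On one hand, Theorem \ref{thm:fFi=Cal} applied to the loop $F^a$ (which remains a topological Hamiltonian loop under Alexander scaling) combined with the $a^4$-scaling of Lemma \ref{lem:Callambdaa} gives $f_{\underline{\mathbb F^a}} \equiv a^4 \overline{\Cal}(F)/\vol_\omega(S^2)$. On the other hand, the $C^0$-continuity of $G_i$ at $a = 0$ together with the Lipschitz estimate \eqref{eq:fH} bounds the $a$-dependence of $f_{\underline{\mathbb F_i^a}}$ linearly in the Hamiltonian norm of the variation, uniformly in $i$. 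Passing to the limit in $i$ and comparing with the quartic scaling from Theorem \ref{thm:fFi=Cal} forces $\overline{\Cal}(F) = 0$, hence $f_{\underline{\mathbb F}} = 0$ as required.

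The main obstacle is this final parametric Floer comparison: although \eqref{eq:fH} is stated for variations of the $t$-Hamiltonian, invoking it along the $a$-parameter of the Alexander hameotopy requires converting the $C^0$-control of $G_i$ from Proposition \ref{prop:dgada} into an $L^{(1,\infty)}$ bound on $\underline{\mathbb F_i^a}$, uniformly in $i$, while carefully accounting for the mean-normalizing constants $c_i(t)$. Unwinding this dictionary — which is precisely what the graphical hypothesis and the scaling $g_a^i(a{\bf q})=a^2 g_1^i({\bf q})$ are designed to deliver — is the technical heart of the proof.
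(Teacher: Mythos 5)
Your proposal gets the setup right and correctly identifies the main ingredients the paper also uses: the Alexander hameotopy $\Lambda_i$ of Theorem \ref{thm:alexander}, the preservation of graphicality under Alexander rescaling from Proposition \ref{prop:onetoonekappa}, the generating-function scaling law behind Proposition \ref{prop:dgada}, and the resulting $C^0$-control of the $a$-Hamiltonian $G_i$ that \emph{cannot} be obtained for a general hameotopy (cf.\ Remark \ref{rem:sHamiltonian}). Up through Corollary \ref{cor:Giconti} your outline tracks the paper closely.

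The final comparison step, however, has a genuine gap. You deduce
$f_{\underline{\mathbb F^a}} \equiv a^4\,\overline{\Cal}(F)/\vol_\omega(S^2)$
from the constancy statement of Theorem \ref{thm:fFi=Cal} and the $a^4$-scaling of $\overline{\Cal}^{path}$, and then claim that a Lipschitz bound in $a$ (via \eqref{eq:fH} and the $C^0$-control of $G_i$) forces $\overline{\Cal}(F)=0$. But there is no contradiction to extract: the function $a \mapsto a^4 c$ is Lipschitz (indeed smooth with bounded derivative) on $[0,1]$ for every value of $c$, and the straightforward bound $\|f_{\underline{\mathbb F_i^a}}\|_\infty \le \|\underline{\mathbb F_i^a}\| \sim a^2\|F_i\|$ from \eqref{eq:fH} is likewise compatible with $a^4 c$ for any $c$. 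The quartic scaling versus linear-or-quadratic control simply does not pin down the constant, so the argument stops short of the conclusion $f_{\underline{\mathbb F}} = 0$.

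What the paper does instead is integrate over $\Delta$. It sets $I_{G_i}(a) = \int_\Delta f_{\underline{\mathbb G_i^a}}\, \pi_2^*\omega$, observes $I_{G_i}(0)=0$, and computes $I_{G_i}'(a)$ from the Hamilton--Jacobi equation \eqref{eq:ham-jacobi}. The graphicality enters precisely to guarantee that the change-of-variables maps $\iota_{\phi_{\underline{\mathbb G_i}}^a}$ are diffeomorphisms converging uniformly to the identity (Lemma \ref{lem:convergetoid}), so the weak-continuity Lemma \ref{lem:weak-continuity} rewrites the $a$-derivative integral as $\int_{S^2} \underline{G_i}(a,y)\,\omega$, which \emph{vanishes identically by mean-normalization of $\underline{G_i}$}. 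Hence $\lim_i I_{G_i}(1) = 0$; and since Theorem \ref{thm:average=Cal} already shows $f_{\underline{\mathbb F}}$ is a constant, a vanishing average forces the constant to be zero. In short, what kills the Calabi invariant is not a scaling mismatch but the cancellation built into the normalization of the $a$-Hamiltonian, detected through the Hamilton--Jacobi equation after averaging over the diagonal. If you want to rescue your outline, you would need to replace the pointwise Lipschitz comparison by this averaged Hamilton--Jacobi computation.
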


An immediate corollary of Theorem \ref{thm:average=Cal} and \ref{thm:graphicalloop}
is the following vanishing result of Calabi invariant.

\begin{cor} Suppose $\lambda = \phi_F$ be an engulfed topological Hamiltonian
loop as in Theorem \ref{thm:average=Cal}. Assume $\lambda $ is weakly graphical.
Then $\overline{\Cal}^{path}(\lambda) = 0$.
\end{cor}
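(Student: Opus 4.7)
The plan is that this corollary is essentially a tautological assembly of the two preceding theorems of the section, requiring no new geometric input. The only real work is to identify the constant value of $f_{\underline{\mathbb F}}$ produced by Theorem \ref{thm:average=Cal} with the value forced to be zero by Theorem \ref{thm:graphicalloop}, and to match notation between the $D^2$-statement of the corollary and the $S^2$-statement of the two theorems.

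Concretely, first I would embed $\lambda = \phi_F$ (with $\supp F \subset \Int D^2$) into $S^2$ via the upper hemisphere, as in subsection \ref{subsec:basic-Calabi}, so that $\lambda$ becomes an engulfed topological Hamiltonian loop in $\CP^{ham}(Sympeo_U(S^2,\omega),id)$ with $U = \Int D^2_+$ and $B = D^2_-$. Then I would form the Lagrangianized Hamiltonians $\underline{\mathbb F_i}$ on $T^*\Delta$ for an approximating sequence $F_i$ of $F$, together with the limit basic phase function $f_{\underline{\mathbb F}} = \lim_{i\to\infty} f_{\underline{\mathbb F_i}}$, whose existence as a uniform limit was recorded in the discussion preceding Theorem \ref{thm:fFi=Cal}. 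Applying Theorem \ref{thm:average=Cal} to this data gives that $f_{\underline{\mathbb F}}$ is the constant $\overline{\Cal}_U(F)/\vol_\omega(S^2)$, and since $\overline{\Cal}_U(F) = \overline{\Cal}^{path}(\lambda)$ by the definition of the extended Calabi homomorphism on path spaces, we obtain
\[
f_{\underline{\mathbb F}} \equiv \frac{\overline{\Cal}^{path}(\lambda)}{\vol_\omega(S^2)}.
\]

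Second, the weakly graphical hypothesis on $\lambda$ places us in exactly the regime of Theorem \ref{thm:graphicalloop}, which verifies Conjecture \ref{conj:fvanishing} in that class and thereby delivers $f_{\underline{\mathbb F}} = 0$. Comparing the two identifications of the constant $f_{\underline{\mathbb F}}$ and multiplying by $\vol_\omega(S^2) > 0$ immediately yields $\overline{\Cal}^{path}(\lambda) = 0$, completing the proof.

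Since the corollary is purely an assembly step, there is no genuine obstacle at this level of the argument. All of the analytic and Floer-theoretic difficulty is packaged into Theorem \ref{thm:graphicalloop}, whose proof combines the Alexander-isotopy contraction from Theorem \ref{thm:alex}, the structural results on graphical diffeomorphisms in Proposition \ref{prop:onetoonekappa} and Proposition \ref{prop:dgada}, and the spectrality of the basic phase function. If I were asked instead to prove Theorem \ref{thm:graphicalloop}, the core difficulty would be to extract a $C^0$-controlled substitute for the $a$-Hamiltonian of the Alexander contraction, which fails to converge in $L^{(1,\infty)}$ in general but is tamed in the graphical case precisely by the identity $g_a(a\,\cdot\,) = a^2 g_1$ that feeds into Proposition \ref{prop:dgada}.
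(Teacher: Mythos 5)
Your proposal is correct and follows exactly the route the paper intends: the corollary is stated as an immediate consequence of Theorem \ref{thm:average=Cal} (which identifies the constant value of $f_{\underline{\mathbb F}}$ with $\overline{\Cal}_U(F)/\vol_\omega(S^2) = \overline{\Cal}^{path}(\lambda)/\vol_\omega(S^2)$) and Theorem \ref{thm:graphicalloop} (which forces that constant to be zero in the weakly graphical case). The assembly you describe, including the $D^2 \hookrightarrow S^2$ embedding and the matching of $\overline{\Cal}_U(F)$ with $\overline{\Cal}^{path}(\lambda)$, is precisely what the paper leaves implicit.
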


The remaining section will be occupied by the proof of Theorem \ref{thm:graphicalloop}.
Let $F$ be a graphical topological Hamiltonian loop and $F_i$ be an approximating sequence
that is $\Psi$-graphical for a Darboux-Weinstein chart $\Psi$.

\begin{proof}[Proof of Theorem \ref{thm:graphicalloop}]
Let $F$ be the Hamiltonian associated to the topological Hamiltonian loop
on $S^2$ arising from the compactly supported Hamiltonian $F$ on $D^2$ that is weakly $\Psi$-graphical.
We fix a sequence of weakly $\Psi$-graphical approximating sequence $F_i$ and
its Alexander isotopy $\Lambda_i = \Lambda_{i,\e_i}$ defined as in \eqref{eq:Lambdaiei}.
(As we mentioned at the end of the last section, we will just use the standard
Alexander isotopy given in Lemma \ref{lem:Callambdaa} below for the simplicity of
exposition.)
We also denote $K_i = K_i(a,t,x)$ the unique associated $a$-Hamiltonian
supported in $\Int D^2$ chosen as in Lemma \ref{lem:choiceK}, and denote $G_i(a,x) = K_i(a,1,x)$.
In particular, $G_i(0, \cdot) = 0$. Recall $\Lambda_i(0,t) = id$ for all $t \in [0,1]$.

We denote $H_i(a)$ the $t$-Hamiltonian defined by $H_i(a)(t,x) = H_i(a,t,x)$.
By the weak $\Psi$-graphicality of $H_i(a)$ from Proposition \ref{prop:onetoonekappa}
for $a \in [0,1]$, ${G_i^a}$ is $\Psi$-graphical
where $G_i^a = G_i^a(s,x) = a\, G_i(as,x)$. We recall from Proposition \ref{prop:homotopybasic}
$\phi_{H_i(a)}^1 = \phi_{G_i}^a$ and
$$
f_{\mathbb H_i(a)} = f_{\mathbb G_i^a}.
$$
By the $\Psi$-graphicality of $G_i^a$, the basic phase function $f_{\mathbb G_i^a}$ is
defined everywhere on $\Delta$ as a smooth (single-valued) function.

Then the function ${\bf f}_{\underline{\mathbb G_i}}: [0,1] \times \Delta \to \R$ defined by
${\bf f}_{\underline{\mathbb G_i}}(a,{\bf q}) = f_{\underline{\mathbb G_i^a}}({\bf q})$ satisfies
the  Hamilton-Jacobi equation
\be\label{eq:ham-jacobi}
\frac{\del {\bf f}_{\underline{\mathbb G_i}}}{\del a}(a,{\bf q}) +
\underline{\mathbb G_i}\left(a, d_{\bf q}{\bf f}_{\underline{\mathbb G_i}}(a,{\bf q})\right) = 0.
\ee
We postpone the derivation of this equation till Appendix. We note
$d_{\bf q}{\bf f}_{\underline{\mathbb G_i}}(a,{\bf q}) = df_{\underline{\mathbb G_i^a}}({\bf q})$
by definition of $f_{\underline{\mathbb G_i^a}}$.

Now we consider the mean-normalized Hamiltonian $\underline{G_i^a}$ and its associated
Hamiltonian $\underline{{\mathbb G}_i^a}$ on $T^*\Delta$.

\begin{lem} Let ${\bf g}_i$ be the function associated to $\phi = \phi_{F_i}^1$ as
defined in Proposition \ref{prop:dgada}. Consider the $a$-Hamiltonian $K_i$ associated to
the Alexander isotopy. Then
\be\label{eq:faq}
{\bf f}_{\underline{\mathbb G_i}}(a,{\bf q}) = {\bf g}_i(a,{\bf q}) + a^4 \frac{\Cal(F_i)}{\vol(S^2)}.
\ee
\end{lem}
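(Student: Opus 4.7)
The plan is to recognize that, for each fixed $a$, both ${\bf f}_{\underline{\mathbb G_i}}(a,\cdot)$ and ${\bf g}_i(a,\cdot)$ are generating functions for the same Lagrangian section of $T^*\Delta$, so they differ only by an $a$-dependent additive constant $c_i(a)$. That constant is then pinned down by evaluating at a single point ${\bf q}_0 \in \Int o_{D^2_-}$, where ${\bf g}_i$ vanishes but $f_{\underline{\mathbb G_i^a}}$ picks up the Calabi term through the mean-normalization.

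For the matching of differentials, I would argue as follows. By Proposition \ref{prop:dgada}, ${\bf g}_i(a,\cdot)$ is a primitive for the one-form whose image is the Alexander chain $\Xi(a,\cdot)$ of \eqref{eq:Xii}. Under the Darboux--Weinstein identification \eqref{eq:qptox}, $\Xi(a,\cdot) = \Graph \phi_{G_i^a}^1$: indeed the reparametrization $G_i^a(s,x) = a\,G_i(as,x)$ yields $\phi_{G_i^a}^1 = \phi_{G_i}^a = \phi_{F_{i,a}}^1$, the $a$-slice of the Alexander isotopy of $F_i$. By Proposition \ref{prop:onetoonekappa} the full family $\{\phi_{G_i^a}^1\}_{a\in[0,1]}$ is $\Psi$-graphical, so this graph is a smooth section of $T^*\Delta$. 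Consequently the basic Lagrangian selector $\sigma_{\underline{\mathbb G_i^a}}$ is everywhere defined and smooth, and by \eqref{eq:sigmaHdfH} coincides with $df_{\underline{\mathbb G_i^a}} = d_{\bf q}{\bf f}_{\underline{\mathbb G_i}}(a,\cdot)$; moreover mean-normalization of the Hamiltonian does not alter its flow, so this section remains $\Graph \phi_{G_i^a}^1$. Since a section is determined by its image and $\Delta \cong S^2$ is connected, ${\bf f}_{\underline{\mathbb G_i}}(a,{\bf q}) - {\bf g}_i(a,{\bf q}) =: c_i(a)$ depends on $a$ alone.

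To compute $c_i(a)$, fix any ${\bf q}_0 \in \Int o_{D^2_-}$. From \eqref{eq:Xii} the Alexander chain agrees with the diagonal on $S^2 \setminus D^2_+$, so ${\bf g}_i(a,{\bf q}_0) = 0$. Since $G_i^a$ is supported in $\Int D^2_+$, $\phi_{G_i^a}^t \equiv id$ on $D^2_-$, and the unique Hamiltonian chord of $\underline{\mathbb G_i^a}$ from $o_\Delta$ to $T^*_{{\bf q}_0}\Delta$ is the constant chord at ${\bf q}_0$. Evaluating its action exactly as in the final step of the proof of Theorem \ref{thm:average=Cal} yields
\[
f_{\underline{\mathbb G_i^a}}({\bf q}_0) \;=\; -\int_0^1 \underline{\mathbb G_i^a}(t,{\bf q}_0)\,dt \;=\; \int_0^1 c_a(t)\,dt \;=\; \frac{\Cal(G_i^a)}{\vol(S^2)},
\]
where $c_a(t) = \frac{1}{\vol(S^2)} \int_{S^2} G_i^a(t,x)\,\omega$. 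Finally, Lemma \ref{lem:Callambdaa} combined with $\phi_{G_i^a}^1 = \phi_{F_{i,a}}^1$ and the fact that the path-Calabi invariant on $D^2$ depends only on the endpoint gives $\Cal(G_i^a) = a^4 \Cal(F_i)$. Substituting produces $c_i(a) = a^4\,\Cal(F_i)/\vol(S^2)$, which is \eqref{eq:faq}.

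The delicate input is the global smoothness of $f_{\underline{\mathbb G_i^a}}$ on the connected base $\Delta$: without this one cannot conclude that two primitives of the same differential differ by a single global constant. This smoothness rests on $\Psi$-graphicality of the \emph{whole} Alexander family, not merely of $\phi_{F_i}^1$, which is precisely the content of Proposition \ref{prop:onetoonekappa}. The remaining point --- that mean-normalization shifts the action by the ${\bf q}$-independent quantity $\int_0^1 c_a(t)\,dt$, which is exactly the Calabi contribution --- is where the factor $a^4\,\Cal(F_i)/\vol(S^2)$ enters the formula.
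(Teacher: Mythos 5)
Your proof is correct and follows essentially the same route as the paper's: both establish that the graphicality (Proposition \ref{prop:onetoonekappa}) makes $f_{\underline{\mathbb G_i^a}}$ a globally smooth generating function with the same differential as ${\bf g}_i(a,\cdot)$, and both determine the $a$-dependent additive constant by evaluating on $D^2_-$ where ${\bf g}_i$ vanishes and the constant-chord action yields $\Cal(G_i^a)/\vol(S^2) = a^4\,\Cal(F_i)/\vol(S^2)$. The only cosmetic difference is that the paper first matches the non-normalized functions $f_{\mathbb G_i^a} = g_{i,a}$ (both vanish outside $D^2_+$) and then adds the normalization shift via Theorem \ref{thm:disccase}, whereas you directly compare the normalized function to ${\bf g}_i$ and pin down the constant at a single point in $\Int o_{D^2_-}$.
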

\begin{proof} We first recall $\Cal(F_i) = \Cal(G_i)$ (Lemma \ref{lem:CalKH}).

Since $f_{\mathbb G_i^a}$ and $g_{i,a}( := {\bf g}_i(a,\cdot))$
satisfy $\Image df_{\mathbb G_i^a} = \Image dg_{i,a}$ with $f_{\mathbb G_i^a} = g_{i,a} \equiv 0$
on $\Delta_{S^2} \setminus \Delta_{D^2_+}$,
$f_{\mathbb G_i^a} = g_{i,a}$ everywhere on $\Delta_{S^2}$. Then applying
Theorem \ref{thm:disccase}, we have finished the proof.
\end{proof}

\begin{cor}\label{cor:Giconti} We have
\be\label{eq:Giaunderline}
\underline{\mathbb G_i}\left(a, d_{\bf q}{\bf f}_{\underline{\mathbb G_i}}(a,{\bf q})\right)
= - 2a f_{\mathbb F_i}\left(\frac{{\bf q}}{a}\right) + \frac{1}{a} df_{\mathbb H_i(a)} ({\bf q})\cdot {\bf q}
-  4 a^3 \frac{\Cal(F_i)}{\vol(S^2)}.
\ee
In particular, the function
$(a,{\bf q}) \mapsto \underline{\mathbb G_i}(a, d_{\bf q}{\bf f}_{\underline{\mathbb G_i}}(a,{\bf q}))$
uniformly converges to a continuous function
$$
- 2a f_{\mathbb F}\left(\frac{{\bf q}}{a}\right) - 4a^3 \frac{\Cal(F)}{\vol(S^2)}.
$$
\end{cor}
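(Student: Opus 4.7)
The plan is to read off the identity \eqref{eq:Giaunderline} directly from the Hamilton--Jacobi equation \eqref{eq:ham-jacobi} once I substitute the decomposition \eqref{eq:faq} and the $a$-derivative formula of Proposition \ref{prop:dgada}. First I would differentiate \eqref{eq:faq} in $a$ and use Proposition \ref{prop:dgada} to replace $\partial{\bf g}_i/\partial a$, obtaining
$$
\frac{\partial{\bf f}_{\underline{\mathbb G_i}}}{\partial a}(a,{\bf q}) = 2a\,g_{i,1}\!\left(\frac{\bf q}{a}\right) - \frac{1}{a}\,dg_{i,a}({\bf q})\cdot{\bf q} + 4a^3\,\frac{\Cal(F_i)}{\vol(S^2)}.
$$
Then I would identify $g_{i,1}$ with $f_{\mathbb F_i}$ and $g_{i,a}$ with $f_{\mathbb H_i(a)}$, using that both pairs are continuous graph selectors of the same Lagrangian (namely $\phi_{\mathbb F_i}^1(o_\Delta)$, respectively $\phi_{\mathbb H_i(a)}^1(o_\Delta) = \phi_{\mathbb G_i^a}^1(o_\Delta)$) and agree off $\Delta_{D^2_+}$, together with the equality $f_{\mathbb G_i^a} = f_{\mathbb H_i(a)}$ recalled from Proposition \ref{prop:homotopybasic}. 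Substituting into \eqref{eq:ham-jacobi} and solving for $\underline{\mathbb G_i}(a,\cdot)$ produces \eqref{eq:Giaunderline}.

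For the convergence claim, the outer two terms of \eqref{eq:Giaunderline} behave well by soft arguments: $-2a\,f_{\mathbb F_i}({\bf q}/a) \to -2a\,f_{\mathbb F}({\bf q}/a)$ uniformly by Corollary \ref{cor:fHiconv}, and $-4a^3\,\Cal(F_i)/\vol(S^2) \to -4a^3\,\overline{\Cal}(F)/\vol(S^2)$ by definition of the extended Calabi homomorphism. The delicate middle term $\frac{1}{a}\,df_{\mathbb H_i(a)}({\bf q})\cdot{\bf q}$ needs more care because of the apparent $1/a$ singularity at $a = 0$.

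The trick is to exploit the Alexander scaling identity ${\bf g}_i(a, a\,{\bf q}) = a^2\,g_{i,1}({\bf q})$ established in Section \ref{sec:weak-alexander}. Differentiating in ${\bf q}$ yields $dg_{i,a}({\bf q}) = a\,dg_{i,1}({\bf q}/a)$, equivalently $df_{\mathbb H_i(a)}({\bf q}) = a\,df_{\mathbb F_i}({\bf q}/a)$, so that
$$
\frac{1}{a}\,df_{\mathbb H_i(a)}({\bf q})\cdot{\bf q} \;=\; df_{\mathbb F_i}\!\left(\frac{\bf q}{a}\right)\cdot{\bf q}.
$$
The singular factor cancels and the right-hand side is bounded by $\|df_{\mathbb F_i}\|_\infty\cdot|{\bf q}|$. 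Because $\phi_F$ is a \emph{loop}, we have $\phi_F^1 = \mathrm{id}$, so the Hausdorff distance between $\phi_{\mathbb F_i}^1(o_\Delta)$ and $o_\Delta$ tends to $0$; Corollary \ref{cor:|df|to0} then forces $\|df_{\mathbb F_i}\|_\infty \to 0$, and hence the middle term converges to $0$ uniformly in $(a,{\bf q})$.

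The only nontrivial step is this uniform cancellation of the $1/a$ singularity. It is precisely here that both the weak-graphicality hypothesis (which underwrites the smoothness of $g_{i,a}$ and the scaling identity from Proposition \ref{prop:dgada}) and the loop hypothesis (which gives $\|df_{\mathbb F_i}\|_\infty \to 0$ rather than just boundedness) enter in an essential way.
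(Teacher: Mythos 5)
Your proof is correct and follows essentially the same route as the paper: differentiate \eqref{eq:faq} in $a$, insert Proposition \ref{prop:dgada} into the Hamilton--Jacobi equation \eqref{eq:ham-jacobi}, and identify $g_{i,a}$ with $f_{\mathbb H_i(a)}$ (and $g_{i,1}$ with $f_{\mathbb F_i}$) via graph-selector uniqueness together with Proposition \ref{prop:homotopybasic}. The only cosmetic difference is in handling the middle term: the paper groups the $1/a$ with ${\bf q}$ to get $df_{\mathbb H_i(a)}({\bf q})\cdot\frac{\bf q}{a}$ and then uses $\supp f_{\mathbb H_i(a)}\subset D^2_+(a)$ to bound $|{\bf q}/a|\leq 1$, whereas you first apply the scaling identity $df_{\mathbb H_i(a)}({\bf q}) = a\,df_{\mathbb F_i}({\bf q}/a)$ to cancel the $1/a$ outright; the two estimates are equivalent, and both hinge on the same loop hypothesis $\phi_F^1=\mathrm{id}$ to force $\|df_{\mathbb F_i}\|_\infty\to 0$ via Corollary \ref{cor:|df|to0}.
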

\begin{proof} By differentiating \eqref{eq:faq}, and comparing \eqref{eq:ham-jacobi} and \eqref{eq:dgada},
we immediately obtain the first statement.

For the second, we rewrite
$$
\left|\frac{1}{a} df_{\mathbb H_i(a)} ({\bf q})\cdot {\bf q}\right| =
\left|df_{\mathbb H_i(a)} ({\bf q})\cdot \frac{\bf q}{a}\right| \leq |df_{\mathbb H_i(a)} ({\bf q})|\left|\frac{\bf q}{a}\right|.
$$
Then we recall $\supp f_{\mathbb H(a)} \subset D^2_+(a)$ (see \eqref{eq:Devlambdaa})
from which we derive
$$
|df_{\mathbb H_i(a)} ({\bf q})|\left|\frac{\bf q}{a}\right| \leq \max_{\bf q}|df_{\mathbb H_i(a)} ({\bf q})| \to 0
$$
as $i \to \infty$. Once this is established, we derive the second statement by taking
the limit of \eqref{eq:Giaunderline}.
\end{proof}

The explicit expression of the right hand side of \eqref{eq:Giaunderline} will not play any role later
in the present paper but only the conclusion that \emph{the function
$(a,{\bf q}) \mapsto \underline{\mathbb G_i}(a, d_{\bf q}{\bf f}_{\underline{\mathbb G_i}}(a,{\bf q}))$
uniformly converges to a continuous function} will do later.

We also derive from this corollary that there exists some constant $C> 0$ independent of $i$'s such that
\be\label{eq:uniformbound}
\left|\underline{\mathbb G_i}\left(a, d_{\bf q}{\bf f}_{\underline{\mathbb G_i}}(a,{\bf q})\right)\right|
\leq C
\ee
for all $a, \, {\bf q}$ and for all sufficiently large $i$'s. (In fact, we have
$$
\left|\underline{\mathbb G_i}\left(a, d_{\bf q}{\bf f}_{\underline{\mathbb G_i}}(a,{\bf q})\right)\right|
\leq 2a \|f_{\mathbb F_i}\|_{C^0} + \|df_{\mathbb H_i(a)}\|_{C^0} + 4 a^3\frac{|\Cal(F_i)|}{\vol(S^2)} \to 
4 a^3\frac{|\Cal(F)|}{\vol(S^2)}
$$
as $i \to \infty$.)

We now consider the integrals
$$
I_{G_i}(a) : = \int_\Delta f_{\underline{\mathbb G_i^a}}\, \pi_2^*\omega.
$$
Then $I_{G_i}(0) = 0$ and
$$
I_{G_i}'(a) = \int_\Delta \frac{\del {\bf f}_{\underline{\mathbb G_i}}}{\del a}\, \pi_2^*\omega
= \int_\Delta \underline{\mathbb G_i}\left(a,df_{\underline{\mathbb G_i^a}}({\bf q})\right)\, \pi_2^*\omega.
$$
and so
$$
I_{G_i}(1) =  \int_0^1 \int_\Delta \underline{\mathbb G_i}\left(a,df_{\underline{\mathbb G_i^a}}({\bf q})\right)
\, \pi_2^*\omega\, da.
$$
By the identity
$$
\phi_{\underline{\mathbb G_i}}^a(o_\Delta) = \Graph \phi_{\underline{G_i}}^a = \Image df_{\underline{\mathbb G_i^a}}
$$
and the bijectivity of the projection,
$$
\pi_2: \phi_{\underline{\mathbb G_i}}^a(o_\Delta) \to S^2
$$
we can write $\pi_1(df_{\underline{\mathbb G_i^a}}({\bf q})) = \phi_{\underline{G_i}}^a(y({\bf q}))$
for the unique $y({\bf q})$ satisfying
$$
\pi_2(df_{\underline{\mathbb G_i^a}}({\bf q})) = y({\bf q})
$$
for each given ${\bf q}$. We denote by
$\pi_1^{\phi_{\underline{\mathbb G_i}}^a(o_\Delta);\Delta}: \phi_{\underline{\mathbb G_i}}^a(o_\Delta) \to \Delta$
the projection of $\phi_{\underline{\mathbb G_i}}^a(o_\Delta)$ to $\Delta$ along $\pi_1$-direction.

Consider the sequence of maps
$\iota_{\phi_{\underline{\mathbb G_i}}^a}: \Delta \to \Delta$ defined by
$$
\iota_{\phi_{\underline{\mathbb G_i}}^a}({\bf q}) = \pi_1^{\phi_{\underline{\mathbb G_i^a}(\Delta);\Delta}} \circ \sigma_{\underline{\mathbb G_i^a}}
$$
where $\pi_1^{\phi_{\underline{\mathbb G_i^a}(\Delta);\Delta}}$ is the projection of
$\phi_{\underline{\mathbb G_i}}^a(\Delta)$ onto $\Delta$ along the $\pi_1$-direction.

Then we obtain the following from the graphicality of $\phi_{\underline{\mathbb G_i}}^a$.

\begin{lem}\label{lem:convergetoid}
Each element of the sequence is a diffeomorphism and that
the sequence $\iota_{\phi_{\underline{\mathbb G_i}}^a}$ uniformly converges to
the identity map $id_{\Delta}$ over $a \in [0,1]$ as $i \to \infty$.
\end{lem}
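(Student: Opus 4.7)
The plan is to unfold $\iota_{\phi_{\underline{\mathbb G_i}}^a}$ as a composition of two diffeomorphisms using graphicality, and then to control $\|\iota - id\|_{C^0}$ by the explicit geodesic-midpoint description of $\pi_\Delta$ together with the Alexander rescaling formula.

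First I will rewrite
\[
\iota_{\phi_{\underline{\mathbb G_i}}^a} \;=\; \pi_1\big|_{\Graph} \;\circ\; \big(\pi_\Delta\big|_{\Graph}\big)^{-1},
\]
where $\Graph = \Graph \phi_{\underline{G_i}}^a = \Graph \phi_{G_i}^a$ (mean-normalizing the Hamiltonian does not alter its flow). The inverse on the right is well-defined and smooth by the $\Psi$-graphicality of the Alexander-rescaled time-one map, which is precisely Proposition \ref{prop:onetoonekappa}. The left factor $\pi_1|_{\Graph}$ factors as $\phi_{G_i}^a \circ \pi_2|_{\Graph}$, so it is a diffeomorphism as well. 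This gives the first assertion.

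For the uniform convergence I will invoke the explicit description of the Darboux-Weinstein chart on $S^2$ recorded right after Theorem \ref{thm:average=Cal}: $\pi_\Delta(u,v)$ is the midpoint of the short geodesic from $u$ to $v$. Writing $\sigma_{\underline{\mathbb G_i^a}}({\bf q}) = (\phi_{G_i}^a(y), y)$ with ${\bf q}$ the geodesic midpoint of $\phi_{G_i}^a(y)$ and $y$, this gives the pointwise estimate
\[
d\!\left(\iota_{\phi_{\underline{\mathbb G_i}}^a}({\bf q}),\, {\bf q}\right) \;=\; d\!\left(\phi_{G_i}^a(y),\, {\bf q}\right) \;=\; \tfrac{1}{2}\, d\!\left(\phi_{G_i}^a(y),\, y\right) \;\leq\; \tfrac{1}{2}\, \overline d\!\left(\phi_{G_i}^a,\, id\right).
\]

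To close the argument I need $\overline d(\phi_{G_i}^a, id) \to 0$ uniformly in $a \in [0,1]$. Since $\phi_{H_i(a)}^1 = \phi_{G_i}^a$, the Alexander formula from Lemma \ref{lem:Callambdaa} gives $\phi_{G_i}^a(x) = a\, \phi_{F_i}^1(x/a)$ inside the supporting disc and the identity outside, whence
\[
\overline d\!\left(\phi_{G_i}^a,\, id\right) \;\leq\; \overline d\!\left(\phi_{F_i}^1,\, id\right)
\]
independently of $a$. The right-hand side tends to $0$ as $i\to\infty$ because $F$ is a topological Hamiltonian \emph{loop}, so its time-one map approximates the identity; this is the only step where the loop hypothesis enters. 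The mild subtlety I foresee is ensuring the geodesic midpoint is globally well-defined along the way, but this is automatic for $i$ large enough that $\phi_{F_i}^1$ lies in a $C^0$-neighborhood of $id_{S^2}$ smaller than the injectivity radius of the round metric.
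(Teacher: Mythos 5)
Your proposal is correct. The paper states this lemma without proof, attributing it only to ``the graphicality of $\phi_{\underline{\mathbb G_i}}^a$,'' and your argument --- the factorization $\iota_{\phi_{\underline{\mathbb G_i}}^a} = \pi_1|_{\Graph} \circ (\pi_\Delta|_{\Graph})^{-1}$ (a diffeomorphism by Proposition \ref{prop:onetoonekappa} and the degree-one argument), together with the geodesic-midpoint description of $\pi_\Delta$ and the Alexander-rescaling bound $\overline d(\phi_{G_i}^a, id) \leq \overline d(\phi_{F_i}^1, id)$ that is uniform in $a$ --- is precisely the intended content. One minor technical point worth keeping in mind: the midpoint estimate $d(\iota({\bf q}),{\bf q}) = \tfrac12\, d(\phi_{G_i}^a(y), y)$ is taken in the round metric on $S^2$, while the Alexander-rescaling bound is cleanest in the affine metric on the disc; since these metrics are uniformly comparable on $D^2_+$, the conclusion is unaffected, though a fixed constant may enter the chain of inequalities.
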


Now we recall the following well-known fact whose proof
follows from  a straightforward $3$ $\epsilon$ argument
and the weak continuity of the pushforward operation of measures under the $C^0$-topology of
continuous maps.

\begin{lem}\label{lem:weak-continuity} Let $X$ be a compact topological space. Denote by
$\mu$ a finite measure on $X$, by $f$ a continuous real-valued function.
Consider a sequence $f_i$ of continuous function
uniformly converging to $f$ and $\iota_i$ a sequence of continuous maps uniformly converging to
the identity map. Then
$$
\lim_{i \to \infty} \int_X f_i\, \mu  = \lim_{i \to \infty} \int_X (f_i \circ \iota_i) \, \mu \left(= \int_X f\, \mu\right).
$$
\end{lem}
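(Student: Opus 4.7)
The plan is a routine $3\varepsilon$ argument, and the paper itself flags it as such. I would prove each of the two limits equals $\int_X f\,\mu$ separately, from which the stated equality follows. The first is immediate: since $\mu(X)<\infty$ and $f_i\to f$ in $C^0(X)$,
$$
\left|\int_X f_i\,\mu - \int_X f\,\mu\right| \leq \|f_i - f\|_{C^0(X)}\,\mu(X) \longrightarrow 0.
$$

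For the second limit I would insert $f\circ \iota_i$ and apply the triangle inequality,
$$
\left|\int_X (f_i\circ\iota_i)\,\mu - \int_X f\,\mu\right| \leq \int_X |f_i\circ\iota_i - f\circ\iota_i|\,\mu + \int_X |f\circ\iota_i - f|\,\mu.
$$
The first summand on the right is again bounded by $\|f_i - f\|_{C^0(X)}\,\mu(X)$, which tends to zero. For the second summand I would invoke uniform continuity of $f$: in the intended application $X$ is a compact metric space (a subset of $\Delta\cong S^2$), so given $\varepsilon>0$ there exists $\delta>0$ with $d(x,y)<\delta$ implying $|f(x)-f(y)|<\varepsilon$. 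The hypothesis that $\iota_i$ converges uniformly to $\mathrm{id}_X$ then yields $\sup_{x\in X} d(\iota_i(x),x)<\delta$ for all sufficiently large $i$, and hence $\int_X |f\circ\iota_i - f|\,\mu \leq \varepsilon\,\mu(X)$. Letting $\varepsilon\downarrow 0$ completes the argument.

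If one wishes the statement in the full generality of a compact Hausdorff topological space $X$, one simply replaces $\delta$-balls by entourages of the unique compatible uniformity; the argument is verbatim. There is no genuine obstacle in this lemma — the only conceptual ingredient is the uniform continuity of $f$, which is automatic on a compact space.
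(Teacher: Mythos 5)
Your proof is correct, and it fills in exactly the routine $3\varepsilon$ argument that the paper alludes to without writing out. The paper's phrasing in terms of ``weak continuity of the pushforward operation of measures'' is just a repackaging: writing $\int_X (f_i\circ\iota_i)\,\mu = \int_X f_i\,d\bigl((\iota_i)_*\mu\bigr)$ and invoking weak-$*$ convergence of $(\iota_i)_*\mu$ to $\mu$ is, when unfolded, the same two-term split you perform (the first term handled by $\|f_i-f\|_{C^0}\,\mu(X)$, the second by uniform continuity of $f$). Your observation that the hypothesis ``compact topological space'' should really be read as ``compact metric space'' (or compact Hausdorff with its canonical uniformity) — since ``uniform convergence of $\iota_i$ to the identity'' already presupposes a uniformity — is a fair point of hygiene, and harmless here because the application has $X = \Delta \cong S^2$.
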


Therefore applying this lemma to the current context of
$$
X = \Delta, \, \mu = \mu_\omega, \, \iota = (\iota_{\phi_{\underline{\mathbb G_i}}^a})^{-1}, \,
f_i = \underline{\mathbb G_i}\left(a, df_{\underline{\mathbb G_i^a}}(\cdot)\right)
$$
and combining Corollary \ref{cor:Giconti}, Lemma \ref{lem:convergetoid} and
\eqref{eq:uniformbound}, we derive
$$
\lim_{i \to \infty}
\int_\Delta \underline{\mathbb G_i}(a, df_{\underline{\mathbb G_i^a}}({\bf q}))\, \mu_\omega
=  \lim_{i\to \infty}
\int_\Delta \underline{\mathbb G_i}\left(a, df_{\underline{\mathbb G_i^a}}
(\iota_{\phi^a_{\underline{\mathbb G_i}}}^{-1}({\bf q}))\right)\, \mu_\omega.
$$
But for ${\bf q} = (y,y)$, recalling $\mathbb G_i = \pi_1^*G_i$ on
$\Graph \phi_{\mathbb G_i}^a = \Image df_{\underline{\mathbb G_i^a}}$,
we derive
\beastar
\underline{\mathbb G_i}\left(a, df_{\underline{\mathbb G_i^a}}
((\iota_{\phi_{\underline{\mathbb G_i}}^a})^{-1}({\bf q})\right)
& = & \underline{G_i}\left(a, \pi_1(df_{\underline{\mathbb G_i^a}}
((\iota_{\phi_{\underline{\mathbb G_i}}^a})^{-1}{\bf q})\right)\\
& = & \underline{\mathbb G_i}\left(a, \pi_1^{\phi_{\underline{\mathbb G_i}}^a(o_\Delta);\Delta}
\circ \sigma_{\underline{\mathbb G_i}^a}
((\iota_{\phi_{\underline{\mathbb G_i}}^a})^{-1}{\bf q})\right)\\
& = & \underline{\mathbb G_i}(a, {\bf q}) = \pi_1^*\underline{G_i}(a,{\bf q}) = \underline{G_i}(a,y).
\eeastar
For the second equality above, we also use the obvious identity
$$
\pi_1^{\phi_{\underline{\mathbb G_i}}^a(o_\Delta)}
= \pi_1^{\Delta} \circ \pi_1^{\phi_{\underline{\mathbb G_i}}^a(o_\Delta);\Delta}.
$$
Here for any given subset $L \subset S^2 \times S^2$,
we denote by $\pi_1^L: L \to S^2$ the restriction of $\pi_1:S^2 \times S^2 \to S^2$ to $L$.

Combining these, we evaluate the integral and derive
$$
\lim_{i\to \infty} \int_\Delta \underline{\mathbb G_i}\left(a, df_{\underline{\mathbb G_i^a}}({\bf q})\right)\, \pi_2^*\omega
= \lim_{i \to \infty} \int_{\Delta} \underline{\mathbb G_i}(a, {\bf q})\, \pi_2^*\omega
= \lim_{i \to \infty} \int_{S^2} \underline{G_i}(a,y)\, \omega = 0
$$
where the last vanishing occurs by the mean-normalization condition of $\underline{G_i}$.
This proves $\lim_{i\to \infty} I_{G_i}(1) = 0$ in particular.

But we have $f_{\underline{\mathbb G_i}}= f_{\underline{\mathbb H_i(1)}} (= f_{\underline{\mathbb F_i}})$
by Proposition \ref{prop:homotopybasic} and in particular
$$
f_{\underline{\mathbb G_i}}= f_{\underline{\mathbb F_i}} \to  f_{\underline{\mathbb F}}
$$
uniformly. Combining the above discussion, we have proved
$$
\int_\Delta f_{\underline{\mathbb F}}\, \pi_2^*\omega  = \lim_{i\to \infty}
\int_\Delta f_{\underline{\mathbb F_i}}\, \pi_2^*\omega = \lim_{i \to
\infty}\int_\Delta f_{\underline{\mathbb G_i}}\, \pi_2^*\omega =0.
$$
This finishes the proof.
\end{proof}

\begin{rem}
 We would like to point out that while the average of $\underline{G_i}$ vanishes and
$\phi_{G_i}^s \to id$ uniformly over $s \in [0,1]$, unlike the $t$-Hamiltonian
$F_i$ which converges in hamiltonian topology, there is no a priori control of
the $C^0$ behavior of the $s$-Hamiltonian $G_i$ itself in general according to the definition of
approximation sequence $H_i$ of the hameotopy in Definition \ref{defn:hameotopy}.
(See \eqref{eq:FLambda} for the explicit form of $G_i(s,\cdot) = K_i(s,1, \cdot)$ in the case of
Alexander isotopy, which evidently involves taking the derivative the $t$-Hamiltonian.)

In this regard, the above proof strongly relies on the graphicality of the topological
Hamiltonian, or more precisely on the graphicality of its approximation sequence.
Without this graphicality, one has to deal with emergence of the caustics of the
projection $\pi_\Delta : \Graph \phi_{\underline{F_i}} \to \Delta$ or equivalently the nondifferentiability
locus of the basic phase function $f_{\underline{\mathbb F_i}}$.
Here seems to enter the piecewise smooth Hamiltonian geometry of
Lagrangian chains. We will elaborate this aspect elsewhere.
\end{rem}

\appendix

\section{Homotopy invariance of basic phase function}
\label{sec:homotopybasic}

In this section, we prove some homotopy invariance property of
the basic phase function.

Let $\Lambda = \{\phi_{H(s)}^t\}$ be a smooth two-parameter family satisfying
$H \equiv 0$ on a neighborhood of $B$ by definition of $\CP^{ham}(Symp_U(M,\omega),id)$ with $U = M \setminus B$.
We denote by $K = K(s,t,x)$ a $s$-Hamiltonian of the 2-parameter family
$\Lambda = \left\{\phi_{H(s)}^t\right\}$ with $K(s,0, \cdot) \equiv 0$:
The latter choice is possible we have
the $s$-Hamiltonian flow $s \mapsto \phi_{H(s)}^0 \equiv id$ and so we can set
$K(s,0,\cdot) \equiv 0$. Recall that $K$ is determined uniquely modulo the
addition of constants depending on $s, \, t$.

We first prove a few lemmata.

\begin{lem}\label{lem:choiceK} Let $H$ and $K$ be as above. Suppose $B$ is connected and has nonempty interior.
Then we can choose the $s$-Hamiltonian $K$ so that
$K(s,t,\cdot) \equiv 0$ on a neighborhood of $B \subset M$ for all $s, \, t \in [0,1]$.
\end{lem}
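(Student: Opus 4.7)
The plan is to exploit the residual gauge freedom in $K$ — namely that $K$ is only determined modulo addition of a function of $(s,t)$ — to subtract off its value on a connected neighborhood of $B$.

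First I would observe that since $H \equiv 0$ on some open neighborhood $U_0$ of $B$, the Hamiltonian vector field $X_{H(s,t,\cdot)}$ vanishes on $U_0$ for every $(s,t)$. Hence every point of $U_0$ is fixed by $\phi_{H(s)}^t$, so the two-parameter family
$$
\Lambda(s,t) = \phi_{H(s)}^t
$$
is identically the identity on $U_0$. Differentiating in $s$ and using $\partial_s \Lambda = X_{K(s,t,\cdot)} \circ \Lambda$ yields $X_{K(s,t,\cdot)} \equiv 0$ on $U_0$; in other words $K(s,t,\cdot)$ is locally constant on $U_0$ for each $(s,t)$.

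Next I would use the connectedness of $B$ together with its nonempty interior to choose a connected open neighborhood $V$ of $B$ with $\overline V \subset U_0$ (for instance, the union of a small tubular neighborhood of $B$ with $\operatorname{Int} B$). On $V$ the locally constant function $K(s,t,\cdot)|_V$ is in fact a single scalar $c(s,t) \in \R$, smooth in $(s,t)$. Since the initial normalization $K(s,0,\cdot) \equiv 0$ holds on all of $M$, it holds on $V$, and so $c(s,0) = 0$ for every $s$.

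The final step is to replace $K$ by
$$
K'(s,t,x) := K(s,t,x) - c(s,t).
$$
Because the only ambiguity in choosing an $s$-Hamiltonian is the addition of a function of $(s,t)$, $K'$ is again a valid $s$-Hamiltonian generating the same family $\Lambda$; the condition $K'(s,0,\cdot) \equiv 0$ is preserved since $c(s,0) = 0$; and by construction $K'(s,t,\cdot) \equiv 0$ on $V$. There is no substantive obstacle here — the argument is pure gauge-fixing — and the role of the connectedness and nonempty-interior hypotheses on $B$ is precisely to guarantee that the locally constant object $K(s,t,\cdot)|_{U_0}$ collapses to a single scalar $c(s,t)$ (rather than a function taking different values on different components of $U_0$), so that the one-parameter additive ambiguity in $K$ is enough to absorb it.
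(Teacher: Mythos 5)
Your proof is correct, and it takes a more elementary route than the paper's. The paper's argument works at the level of de Rham cohomology: it observes that the closed one-form $Y\,\rfloor\,\omega$ defines a class in $H^1(M,B)$, then invokes the long exact sequence of the pair $(M,B)$ to conclude that the map $H^1(M,B)\to H^1(M)$ has trivial kernel (using that $B$ is nonempty and connected so that $H^0(M)\to H^0(B)$ is surjective, equivalently $\widetilde H^0(B)=0$), and hence that exactness of $Y\,\rfloor\,\omega$ on $M$ upgrades to a primitive vanishing near $B$. Your argument unwinds this homological machinery into a direct gauge-fixing computation: since $H\equiv 0$ on a neighborhood $U_0$ of $B$, the two-parameter flow fixes $U_0$ pointwise, so the $s$-generating vector field vanishes there and $K(s,t,\cdot)$ is locally constant on $U_0$; connectedness of a chosen neighborhood $V\supset B$ collapses this locally constant function to a single scalar $c(s,t)$, which you absorb using the additive ambiguity in $K$, checking $c(s,0)=0$ so the normalization $K(s,0,\cdot)\equiv 0$ survives. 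The two arguments prove the same thing — yours makes the role of connectedness concrete (one scalar to subtract, not several), while the paper's is phrased so as to generalize immediately to the statement that $H^1(M,B)\to H^1(M)$ is injective. One small remark: your argument does not actually use the nonempty-interior hypothesis on $B$ — connectedness alone suffices to find a connected neighborhood $V$ inside $U_0$ — and indeed that hypothesis appears to be carried along for use elsewhere in the paper rather than being essential to this lemma.
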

\begin{proof} Let $H$ be as above and consider its associated $s$-Hamiltonian vector field
$Y$, i.e.,
$$
Y = \frac{\del \phi_{H(s)}^t}{\del s} \circ (\phi_{H(s)}^t)^{-1}.
$$
By definition, we have $Y \rfloor \omega$ is an exact one form that vanishes
on $B$. Considering the exact sequence
$$
\to H^0(B) \to H^1(M, B) \to H^1(M) \to
$$
and noting $H^0(B) = 0$, the map $H^1(M, B) \to H^1(M)$ has zero kernel.
This implies that
$Y \rfloor \omega = dK_{s,t}$ for some $K_{s,t}: M \to \R$ with $\supp K \subset M\setminus B$.
This finishes the proof.
\end{proof}

This in particular implies $\phi_{K^1} \in \CP^{ham}(Symp_U(M,\omega),id)$.
Next we have the following coincidence of the Calabi invariant.

\begin{lem}\label{lem:CalKH} Assume $H(0) = 0$ and let $K$ be chosen as in Lemma \ref{lem:choiceK}. Then
$$
\Cal_U(K^1) = \Cal_U(H(1))
$$
\end{lem}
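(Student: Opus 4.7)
The plan is to express both Calabi invariants as triple integrals over $[0,1]^2_{s,t}\times M$ using the vanishing boundary data $H(0,\cdot,\cdot)\equiv 0$ and $K(\cdot,0,\cdot)\equiv 0$, and then to identify their difference as the integral of a Poisson bracket, which vanishes by Stokes' theorem.

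First, by the fundamental theorem of calculus in the $s$ variable and the hypothesis $H(0,t,\cdot)\equiv 0$,
\[
\Cal_U(H(1)) \;=\; \int_0^1\!\!\int_M H(1,t,x)\,\omega^n\,dt \;=\; \int_0^1\!\!\int_0^1\!\!\int_M \frac{\partial H}{\partial s}(s,t,x)\,\omega^n\,ds\,dt,
\]
and symmetrically, using the normalization $K(s,0,\cdot)\equiv 0$ furnished by Lemma \ref{lem:choiceK},
\[
\Cal_U(K^1) \;=\; \int_0^1\!\!\int_M K(s,1,x)\,\omega^n\,ds \;=\; \int_0^1\!\!\int_0^1\!\!\int_M \frac{\partial K}{\partial t}(s,t,x)\,\omega^n\,ds\,dt,
\]
where Fubini has been applied to align the outer integrations. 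I would then invoke Banyaga's compatibility identity for a two-parameter Hamiltonian family, as recalled in Remark \ref{rem:sHamiltonian}, namely $\frac{\partial K}{\partial t}-\frac{\partial H}{\partial s}=\{H,K\}$, to obtain
\[
\Cal_U(K^1)-\Cal_U(H(1)) \;=\; \int_0^1\!\!\int_0^1\!\!\int_M \{H,K\}(s,t,\cdot)\,\omega^n\,ds\,dt.
\]
For each fixed $(s,t)$, since $H(s,t,\cdot)$ is compactly supported in $U=M\setminus B$ the vector field $X_H$ is compactly supported and symplectic; writing $\{H,K\}\,\omega^n=\mathcal{L}_{X_H}(K\,\omega^n)$ and applying Stokes' theorem yields $\int_M\{H,K\}\,\omega^n=0$, which closes the argument after integrating over $(s,t)\in[0,1]^2$.

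The only genuinely delicate step is the insistence on the boundary condition $K(s,0,\cdot)\equiv 0$, which is precisely what Lemma \ref{lem:choiceK} provides (and the reason that connectedness and nonempty interior of $B$ were required there). Without this normalization, the $t=0$ contribution in the fundamental theorem of calculus would pollute the computation by a term $-\int_M K(s,0,\cdot)\,\omega^n$ per $s$-slice and the identification with $\Cal_U(K^1)$ would fail; with it, no further obstruction arises and the identity is immediate from Banyaga plus Stokes.
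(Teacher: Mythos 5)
Your proof is correct, but it takes a genuinely different route from the paper. The paper's argument is geometric: it observes that $\phi_{K^1}^1 = \phi_{H(1)}^1$, notes that the two Hamiltonian paths $t\mapsto\phi_{H(1)}^t=\Lambda(1,t)$ and $s\mapsto\phi_{K^1}^s=\Lambda(s,1)$ are the two edges of a square $\Lambda$ whose other two edges are constant at the identity, writes down an explicit homotopy relative to the ends sweeping the square from one edge to the other, and then invokes smooth homotopy invariance of $\Cal_U$ as a black box. Your argument instead unrolls what that black box contains: you express $\Cal_U(H(1)) - \Cal_U(K^1)$ as a triple integral of $\partial_s H - \partial_t K$, apply Banyaga's compatibility identity $\partial_t K - \partial_s H = \{H,K\}$ (which the paper already records in Remark \ref{rem:sHamiltonian}), and then kill the resulting Poisson-bracket integral slice-by-slice via $\{H,K\}\omega^n = d\iota_{X_H}(K\omega^n)$ and Stokes. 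Your route is more computational and self-contained — it never appeals to path-independence of $\Cal_U$, and in fact it essentially reproves it for this configuration — while the paper's is shorter because it leans on an existing invariance principle. Both versions correctly exploit the same two boundary normalizations $H(0,\cdot,\cdot)\equiv 0$ and $K(\cdot,0,\cdot)\equiv 0$; you are also right to flag that the latter is exactly what Lemma \ref{lem:choiceK} secures and why it matters.
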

\begin{proof} First note $\phi_{K^1}^1 = \phi_{H(1)}^1$.
Denote by $\Lambda(s,t) = \phi_{H(s)}^t$ the two-parameter family associated to $H$. Then
$$
\Lambda(0,t) \equiv id \equiv \Lambda(s,0)
$$
by the requirement $H(0,t,x) \equiv 0$. Therefore the Hamiltonian path $t \mapsto \phi_{H(1)}^t :=  \Lambda(1,t)$
is smoothly homotopic to
the path $s \mapsto \phi_{K^1}^s:= \Lambda(s,1)$ relative to the ends and hence we have
the lemma by the smooth homotopy invariance of $\Cal_U$: In fact, an explicit
homotopy $\Upsilon: [0,1]^2 \to Symp_U(M,\omega)$ between them is
given by the formula
$$
\Upsilon(s,t) = \begin{cases} \Lambda(t,1 + 2s(t-1)) \quad & \text{for } \, 0 \leq s \leq \frac{1}{2}\\
\Lambda\left(2(s-1/2) + 2t(1-s),t\right) \quad & \text{for } \, \frac{1}{2} \leq s \leq 1.
\end{cases}
$$
The map $\Upsilon$ satisfies
\beastar
\Upsilon(0,t) & = &  \Lambda(t,1) = \phi_{K^1}^t, \quad \Upsilon(1,t) = \phi_{H(1)}^t, \\
\Upsilon(s,0) & = &  id, \quad \Upsilon(s,1) = \Lambda(1,1) =\phi_{H(1)}^1 = \phi_F^1
\eeastar
and hence is the required homotopy relative to the ends.
\end{proof}

Now we prove homotopy invariance of the basic generating function and the basic phase functions.
\begin{prop}\label{prop:homotopybasic}
$\widetilde h_{\K^1} = \widetilde h_{\H(1)}$ and $f_{\K^1} = f_{\H(1)}$
\end{prop}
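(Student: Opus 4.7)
The strategy is to deduce both equalities from homotopy invariance of the classical action functional along the explicit homotopy $\Upsilon$ produced in the proof of Lemma \ref{lem:CalKH}. Lifting $\Upsilon$ via the cut-off $\chi$ of Section \ref{sec:engulfed}, we obtain a smooth two-parameter family of compactly supported Hamiltonians $\mathbb U(s,t,\cdot)$ on $T^*\Delta$ whose $t$-Hamiltonian path at $s=0$ generates $t \mapsto \phi_{\K^1}^t$ and at $s=1$ generates $t \mapsto \phi_{\H(1)}^t$, with $\mathbb U(s,0,\cdot) \equiv id$ and $\mathbb U(s,1,\cdot) \equiv \phi_{\H(1)}^1$ for every $s \in [0,1]$.

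For any fixed ${\bf q} \in o_\Delta$, set $\gamma_s(t) := \mathbb U(s,t)({\bf q})$, a Hamiltonian trajectory for $\mathbb U(s,\cdot,\cdot)$ with $s$-independent endpoints ${\bf q}$ and $\phi_{\H(1)}^1({\bf q})$. The first variation formula \eqref{eq:1stvariation}, together with Hamilton's equation for $\gamma_s$ and with $\partial_s \gamma_s(0) = \partial_s \gamma_s(1) = 0$, reduces the $s$-derivative of the action to
\[
\frac{d}{ds}\,\CA^{cl}_{\mathbb U(s)}(\gamma_s) \;=\; -\int_0^1 \frac{\partial \mathbb U}{\partial s}(s,t,\gamma_s(t))\,dt.
\]
Let $\mathbb K_\Upsilon(s,t,\cdot)$ denote the $s$-Hamiltonian of the $t$-family $s \mapsto \mathbb U(s,t)$. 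Banyaga's identity $\partial_s \mathbb U - \partial_t \mathbb K_\Upsilon = -\{\mathbb U, \mathbb K_\Upsilon\}$, combined with the pointwise identity $\{\mathbb U, \mathbb K_\Upsilon\}(\gamma_s(t)) = \partial_t \mathbb K_\Upsilon(\gamma_s(t)) - \frac{d}{dt}\mathbb K_\Upsilon(s,t,\gamma_s(t))$ along any trajectory of $\mathbb U(s)$, collapses the right-hand side to the boundary term $-\bigl[\mathbb K_\Upsilon(s,t,\gamma_s(t))\bigr]_{t=0}^{t=1}$. At both $t=0$ and $t=1$ the family $s \mapsto \mathbb U(s,t)$ is constant in $s$, so its $s$-Hamiltonian vector field vanishes there, which forces $\mathbb K_\Upsilon(s,0,\cdot)$ and $\mathbb K_\Upsilon(s,1,\cdot)$ to be $x$-constant on the connected space $T^*\Delta$; being compactly supported they must vanish identically. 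Integrating from $s=0$ to $s=1$ therefore yields $\widetilde h_{\K^1}({\bf q}) = \widetilde h_{\H(1)}({\bf q})$.

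The equality $f_{\K^1} = f_{\H(1)}$ follows from the same principle applied to the Lagrangian Floer mini-max $\rho^{lag}$. The Floer continuation homomorphism
\[
CF(\K^1; o_\Delta, T^*_{\bf q}\Delta) \longrightarrow CF(\H(1); o_\Delta, T^*_{\bf q}\Delta)
\]
induced by the homotopy $\mathbb U$ is a chain isomorphism, and the action computation above, applied chord-wise, shows that it preserves the action filtration $\lambda_H$. Consequently the mini-max values coincide at each ${\bf q}$, giving the desired pointwise equality of basic phase functions.

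Two technical points require care but present no essential obstacle: first, the $\Upsilon$ of Lemma \ref{lem:CalKH} is only piecewise smooth at $s = 1/2$, which is handled by a standard smoothing together with continuity of the action in the smooth Hamiltonian topology; second, the vanishing of the boundary term relies on the reduction to compactly supported Hamiltonians on $T^*\Delta$ via the cut-off $\chi$, without which one would have to keep track of the global constants introduced by the mean-normalization.
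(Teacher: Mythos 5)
Your proof of the first equality $\widetilde h_{\K^1} = \widetilde h_{\H(1)}$ is correct but takes a genuinely different route from the paper. The paper applies the first variation formula \eqref{eq:1stvariation} with respect to the spatial variable $v \in T_{\bf q}\Delta$ (not the homotopy parameter), obtaining $d\widetilde h_{\K^1}(v) = \langle \theta(\phi_{\K^1}^1({\bf q})), T\phi_{\K^1}^1(v)\rangle$ and the analogous formula for $\H(1)$; since $\phi_{\K^1}^1 = \phi_{\H(1)}^1$, the two differentials coincide, and the additive constant is then pinned down by noting that both generating functions vanish on $\Delta_B$ because the lifted Hamiltonians vanish there and the corresponding chords are constant. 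You instead compute $\tfrac{d}{ds}\CA^{cl}_{\mathbb U(s)}(\gamma_s)$ directly, reduce it via Banyaga's identity and integration by parts in $t$ to the boundary term $\bigl[\mathbb K_\Upsilon(s,t,\gamma_s(t))\bigr]_{t=0}^{t=1}$, and kill that term using compact support together with $s$-constancy of $\Upsilon(s,0)$ and $\Upsilon(s,1)$. This sidesteps the $\Delta_B$ argument at the price of the Banyaga manipulation; both approaches are valid.

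However, there is a genuine gap in your argument for $f_{\K^1} = f_{\H(1)}$. You assert that ``the action computation above, applied chord-wise, shows that [the continuation homomorphism] preserves the action filtration.'' This does not follow. The action shift produced by a Floer continuation map is controlled by the nonnegative energy of the continuation solutions together with a correction term of the form $\int_0^1\int_0^1 \max_x \bigl(\partial_s \mathbb U_s(t,x)\bigr)^{\pm}\,dt\,ds$ (sup or inf over all of $T^*\Delta$, not evaluation along chords), and your computation that chord actions are $s$-independent gives no control over that correction term. To conclude filtration preservation you would need to estimate $\partial_s\mathbb U_s$ on all of $T^*\Delta$, which your argument does not do. The paper instead invokes the standard homotopy-invariance mechanism for spectral invariants: the first equality shows the critical value sets (spectra) of $\CA^{cl}_{\K^1}$ and $\CA^{cl}_{\H(1)}$ coincide; since $s \mapsto f_{\mathbb U(s)}({\bf q})$ is continuous in the Hofer norm by \eqref{eq:fH}, and the spectrum is discrete for generic ${\bf q}$, the spectral value cannot jump, so $f_{\K^1} = f_{\H(1)}$. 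That continuity-plus-discreteness argument is what is actually needed here, not a filtered chain isomorphism.
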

\begin{proof} We apply the first variation formula
\eqref{eq:1stvariation} to $z_{\K^1}^{\bf q}(s)$ and $z_{\H(1)}^{\bf q}(t)$
respectively, and obtain
\beastar
d\widetilde h_{\K^1}(v)& = &\langle \theta(\phi_{\K^1}^1({\bf q})), T\phi_{K^1}^1(v) \rangle\\
d\widetilde h_{\H^1}(v) & = & \langle \theta(\phi_{\H(1)}^1({\bf q})), T\phi_{\H(1)}^1(v) \rangle
\eeastar
for any given $v \in T_{\bf q} \Delta$.
Since $\phi_{\K^1}^1 = \phi_{\H(1)}^1$, we have proved
$d\widetilde h_{\K^1} = d\widetilde h_{\H(1)}$. On the other hand,
for any point ${\bf q} \in \Delta_B$, $\H \equiv 0 \equiv \K^1$ on a neighborhood of ${\bf q}$ in $T^*\Delta$
and so both $z_{\K^1}^{\bf q}$ and $z_{\H(1)}^{\bf q}$
are constant.
Therefore the values of both $\widetilde h_{\K^1}$ and $\widetilde h_{\H(1)}$
are zero at such a point ${\bf q} \in \Delta_B$. This finishes the proof of the first
equality.

For the proof of $\widetilde f_{\K^1} = \widetilde f_{\H(1)}$, the first
equality in particular implies that the sets of critical values of the action
functionals
$$
\CA^{cl}_{\K^1}, \, \CA^{cl}_{\H(1)}: \Omega(o_N,T_q^*M) \to \R
$$
coincide. Then standard homotopy argument used in the homotopy invariance of (in fact any type of)
the spectral invariant applies to prove
$\rho^{lag}(H, \{q\}) = f_{H}(q)$ for each $q \in N$ for general $H$.
This finishes the proof.
\end{proof}
Then combining Lemma \ref{lem:CalKH} and Proposition \ref{prop:homotopybasic}, we also derive
\be\label{eq:fK1=}
f_{\underline{\K}^1} =
f_{\K^1} + \frac{\Cal_U(K^1)}{\vol_\omega(M)} = f_{\H(1)} + \frac{\Cal_U(H(1))}{\vol_\omega(M)} =
f_{\underline{\H}(1)}
\ee

With this preparation, in the  proof of Theorem \ref{thm:graphicalloop} later,
we used ${\K}^1$ instead of $\H(1)$ in our proof. This is because we exploited
the fact that $\phi_{K^1}^s$ is $C^0$-small.

\section{Timewise basic phase function as a solution to Hamilton-Jacobi equation}
\label{append:HJ-equation}

In this section, we show that the space-time basic phase function ${\bf f}_H$
defined by
$$
{\bf f}_ H(t,q) = f_{H^t}(q)
$$
satisfies the Hamilton-Jacobi equation. More precise description of this statement is now in order.

Let $N$ be an arbitrary compact manifold without boundary and
let $H = H(t,x)$ be a time-dependent Hamiltonian defined on
the cotangent bundle $T^*N$ and $L = \phi_H^1(o_N)$ be the associated
Hamiltonian deformation of $o_N$. In this case, there is a canonical generating function
of $L$ associated to the Hamiltonian $H$ given as follows.

We first start with the discussion on the basic generating function. (We refer the readers to \cite{oh:lag-spectral} for
more detailed exposition on this.)
For any given time-dependent Hamiltonian
$H = H(t,x)$, the classical action functional on the space
$$
\CP(T^*N) : = C^\infty([0,1],T^*N)
$$
is defined by
\be\label{eq:CAH}
\CA^{cl}_H(\gamma) = \int \gamma^*\theta - \int_0^1 H(t,\gamma(t))\, dt.
\ee
We denote $L_H = \phi_{H}^1(o_N)$ and by $i_H:L_H \hookrightarrow T^*N$ the inclusion map.
For given $x \in L_H$, we define the Hamiltonian trajectory
$$
z_x^H(t) = \phi_H^t((\phi_H^1)^{-1}(x))
$$
which is one satisfying
$$
z_x^H(0) \in o_N, \quad z_x^H(1) = x.
$$
The function $h_H: L_H \to \R$, called the \emph{basic generating function} in
\cite{oh:lag-spectral}, is defined by
$$
h_H(x) = \CA_H(z_x^H).
$$
It satisfies $i_H^*\theta = dh_H$ on $L_H$, i.e., $h_H$ is a
canonical generating function of $L_H$ in that it satisfies
$$
i_H^*\theta = dh_H.
$$

Then we consider the parametric version of basic generating function \eqref{eq:tildehK1}
which is defined by
\be\label{eq:mathbbh}
{\bf h}_H(t,x) := h_{H^t}(x)
\ee
on $\Tr_{\phi_H}(o_N) := \bigcup_{t \in [0,1]} \{t\} \times \phi_H^t(o_N)$. A straightforward calculation leads to

\begin{prop}\label{prop:dhH}
Consider the map
$$
\Psi_H: [0,1] \times N \to T^*[0,1] \times T^*N \cong T^*([0,1] \times N)
$$
defined by the formula
\be\label{eq:suspension}
\Psi_H(t,q) = \left(t, - H(t,\phi_H^t(o_q)), \phi_H^t(o_q)\right)
\ee
where $o_q \in o_N$ associated to the point $q \in N$.
Then $\Psi_H$ is an exact Lagrangian embedding of $[0,1] \times N$.
Denote the associated exact Lagrangian submanifold by
$$
\widehat L: = \Image \Psi_H
$$
and by $i_{\widehat L}: \widehat L \to T^*([0,1] \times N)$ the inclusion map.
We also denote by $p: \widehat L \to [0,1] \times T^*N$ the restriction to $\widehat L$ of the
natural projection $T^*([0,1] \times N) \to [0,1] \times T^*N$.
Let $(t,a)$ be the canonical coordinate of $T^*[0,1]$.
Then the timewise basic generating functions $\widetilde{\bf h}_H, \, {\bf h}_H$ satisfy
\bea\label{eq:dhH-moving}
d\widetilde{\bf h}_H & = & \Psi_H^*(\theta + a\, dt) \nonumber\\
p^*d{\bf h}_H & = & i_{\widehat L}^*(\theta + a\, dt)
\eea
on $[0,1] \times N$ and on $\widehat L$ respectively.
In particular, ${\bf h}_H\circ p $ is a generating function of
the exact Lagrangian submanifold $\widehat L \subset T^*([0,1] \times N)$.
\end{prop}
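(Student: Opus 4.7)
The plan is to carry out an explicit integration of $\widetilde{\mathbf h}_H$ using the identity $\phi_{H^t}^s = \phi_H^{ts}$, differentiate it via the first variation formula \eqref{eq:1stvariation} along Hamiltonian trajectories, and then compare directly with the pullback $\Psi_H^*(\theta+a\,dt)$. The second formula is obtained from the first by transporting along the canonical parameterization $[0,1]\times N \to \widehat L$.

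First I would unfold the definition $\widetilde{\mathbf h}_H(t,q) = \widetilde h_{H^t}(q) = \CA^{cl}_{H^t}(s \mapsto \phi_{H^t}^s(o_q))$. Since $\phi_{H^t}^s = \phi_H^{ts}$, the change of variable $\tau = ts$ collapses the action integral to $\widetilde{\mathbf h}_H(t,q) = \int_0^t \bigl[\theta(X_H) - H\bigr](\tau,\phi_H^\tau(o_q))\,d\tau$. Differentiation in $t$ is immediate and yields $\bigl[\theta(X_H) - H\bigr](t,\phi_H^t(o_q))$. For the $q$-derivative the path $\tau \mapsto \phi_H^\tau(o_q)$ is a Hamiltonian trajectory so the bulk term in \eqref{eq:1stvariation} vanishes, the boundary term at $\tau=0$ vanishes because $\theta|_{o_N}=0$, and one is left with $d_q\widetilde{\mathbf h}_H(t,q)\cdot v = \langle\theta(\phi_H^t(o_q)),T\phi_H^t\cdot v\rangle$ for $v\in T_qN$. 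Writing $\Sigma(t,q):=\phi_H^t(o_q)$, these two partials assemble into $d\widetilde{\mathbf h}_H = \Sigma^*\theta - H(t,\Sigma)\,dt$.

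Next I would compute $\Psi_H^*(\theta+a\,dt)$ straight from the definition of $\Psi_H$. Its $T^*N$-component is $\Sigma$, so $\Psi_H^*\theta = \Sigma^*\theta$; its $T^*[0,1]$-component sends $(t,q)$ to $(t,-H(t,\Sigma))$, so $\Psi_H^*(a\,dt) = -H(t,\Sigma)\,dt$. Adding the two reproduces the expression for $d\widetilde{\mathbf h}_H$ obtained above, which is the first formula. Since the pullback is exact, applying $d$ yields $\Psi_H^*\,d(\theta+a\,dt)=0$, so $\widehat L$ is Lagrangian; $\Psi_H$ is a smooth embedding because its projection $(t,q)\mapsto(t,\phi_H^t(o_q))$ to $[0,1]\times T^*N$ already is, and exactness with primitive $\widetilde{\mathbf h}_H$ is built in.

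For the second formula, $p$ restricted to $\widehat L$ identifies it diffeomorphically with $\{(t,\phi_H^t(o_q)) : (t,q)\in[0,1]\times N\}\subset[0,1]\times T^*N$, and under this identification the relation $\mathbf h_H(t,x) = h_{H^t}(x)$ of \eqref{eq:mathbbh} together with $\widetilde h_{H^t} = h_{H^t}\circ\phi_H^t$ gives $\mathbf h_H\circ p\circ\overline\Psi_H = \widetilde{\mathbf h}_H$, where $\overline\Psi_H$ denotes $\Psi_H$ with codomain $\widehat L$. Pulling back along this composition and using the first formula yields $\overline\Psi_H^*(p^*d\mathbf h_H) = d\widetilde{\mathbf h}_H = \overline\Psi_H^*\,i_{\widehat L}^*(\theta+a\,dt)$, and cancelling the diffeomorphism $\overline\Psi_H$ gives the second formula on $\widehat L$. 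No serious obstacle is anticipated here; the main care required is in keeping track of the two pullbacks $\Psi_H$ and $p$ and of the reparameterization identity $\phi_{H^t}^s = \phi_H^{ts}$ governing the change of variable behind the definition of $\widetilde{\mathbf h}_H$.
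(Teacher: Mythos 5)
Your proof is correct, and since the paper itself presents this proposition only with the remark that "a straightforward calculation leads to" it, your argument supplies precisely the calculation the paper elides: unfolding $\widetilde{\bf h}_H(t,q)=\int_0^t[\theta(X_H)-H](\tau,\phi_H^\tau(o_q))\,d\tau$ via the reparameterization $\phi_{H^t}^s=\phi_H^{ts}$, extracting the two partials from the first variation formula, matching them against $\Psi_H^*(\theta+a\,dt)$, and then transporting to $\widehat L$ through $p|_{\widehat L}$ using $\widetilde{\bf h}_H={\bf h}_H\circ p\circ\Psi_H$. This is the natural route and no gap is present.
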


As a function on the zero section $o_N \cong N$, not on $L_H$, the basic generating function $h_H$
is a multi-valued function. But the basic phase function ${\bf f}_H$, as a
timewise graph selector, satisfies the identity
\be\label{eq:hH=fHpi}
{\bf h}_H(t,x) = {\bf f}_H(t,\pi(x)).
\ee
In particular, substituting $x = d_q{\bf f}_H(t,q)$ into \eqref{eq:hH=fHpi}
and noting $\pi(d_q{\bf f}_H(t,q)) = q$, we obtain
$$
{\bf f}_H(t,q) ={\bf h}_H(t, d_x{\bf f}_H(t,q)) = {\bf h}_H \circ \sigma_H(t,q).
$$
Here $\sigma_H$ is the map defined by $\sigma_H(t,q) = df_{H^t}(q)$, which is
the timewise version of the definition \eqref{eq:sigmaHdfH} whose image is
contained in $\widehat L$.

Therefore
$$
d{\bf f}_H  =  d({\bf h}_H \circ \sigma_H) = \sigma_H^*(d{\bf h}_H)
= \sigma_H^* i_{\widehat L}^*(\theta + a\, dt)\\
 =  (i_{\widehat L} \circ \sigma_H)^*(\theta + a\, dt)
$$
on the smooth locus of ${\bf f}_H$ in $[0,1] \times N$.
But
$$
i_{\widehat L} \circ \sigma_H(t,q) = (t, -H(t, \sigma_H(t,q)), \sigma_H(t,q)).
$$
Therefore
$$
(i_{\widehat L} \circ \sigma_H)^*(\theta + a\, dt) = \sigma_{H}^*\theta - H(t, \sigma_H(t,q))\, dt
$$
and hence
$$
d{\bf f}_H = \sigma_{H}^*\theta - H(t, \sigma_H(t,q))\, dt.
$$
We also have $d\pi d\sigma_H(\frac{\del}{\del t}) = 0$ since $\pi \sigma_H(t,q) = q$ for all $t$.
This implies
$$
\sigma_H^*\theta(\frac{\del}{\del t}) = \sigma_H(t,q)\left(d\pi d\sigma_H(\frac{\del}{\del t})\right) =  0.
$$
In particular, we have derived
$$
\frac{\del {\bf f}_H}{\del t} =  - H(t, \sigma_H(t,q))
$$
(on the smooth locus $N \setminus \operatorname{Sing}(d{\bf f}_H)$).
This is equivalent to the Hamilton-Jacobi equation
$$
\frac{\del {\bf f}_H}{\del t}(t,q) + H(t,d_q{\bf f}_H(t,q)) = 0.
$$

By repeating the above discussion for the time $s$ instead of $t$ in our present context of $N = \Delta$
we have derived the Hamilton-Jacobi equation \eqref{eq:ham-jacobi}.

\end{document}